\newfont{\cyr}{wncyr10 scaled 1100}
\theoremstyle{plain}
\newtheorem{theorem}{Theorem}[section]
\newtheorem{corollary}[theorem]{Corollary}
\newtheorem{lemma}[theorem]{Lemma}
\newtheorem{proposition}[theorem]{Proposition}
\theoremstyle{definition}
\newtheorem{definition}[theorem]{Definition}
\theoremstyle{remark}
\newtheorem{remark}[theorem]{Remark}
\newcommand{\E}{\mathcal E}
\newcommand{\Q}{\mathbb{Q}}
\newcommand{\Z}{\mathbb{Z}}
\newcommand{\LL}{\mathbb{L}}
\newcommand{\F}{\mathbb{F}}
\newcommand{\C}{\mathbb{C}}
\newcommand{\X}{\mathcal{X}}
\newcommand{\J}{\mathcal{J}}
\newcommand{\JL}{\mathrm{JL}}
\newcommand{\Gal}{\operatorname{Gal}}
\newfont{\gotip}{eufb10 at 12pt}
\newcommand{\cO}{{\mathcal O}}
\newcommand{\lra}{\longrightarrow}
\newcommand{\Pic}{{\mathrm{Pic}}}
\newcommand{\R}{{\mathbb R}}
\newcommand{\M}{\mathcal M}
\newcommand{\m}{\mathfrak{m}}
\renewcommand{\l}{\ell}
\newcommand{\T}{\mathbb T}
\DeclareMathOperator{\Hom}{Hom}
\begin{document}

\title[Jochnowitz congruences on Shimura curves]{Heegner points and Jochnowitz congruences\\on Shimura curves}
\date{}
\author{Stefano Vigni}

\begin{abstract}
Given an elliptic curve $E$ over $\Q$, a suitable imaginary quadratic field $K$ and a quaternionic Hecke eigenform $g$ of weight $2$ obtained from $E$ by level raising such that the sign in the functional equation for $L_K(E,s)$ (respectively, $L_K(g,1)$) is $-1$ (respectively, $+1$), we prove a ``Jochnowitz congruence'' between the algebraic part of $L'_K(E,1)$ (expressed in terms of Heegner points on Shimura curves) and the algebraic part of $L_K(g,1)$. This establishes a relation between Zhang's formula of Gross--Zagier type for central derivatives of $L$-series and his formula of Gross type for special values. Our results extend to the context of Shimura curves attached to division quaternion algebras previous results of Bertolini and Darmon for Heegner points on classical modular curves.  
\end{abstract}

\address{Department of Mathematics, King's College London, Strand, London WC2R 2LS, United Kingdom}
\email{stefano.vigni@kcl.ac.uk}

\subjclass[2010]{11G05, 11G40}
\keywords{Heegner points, Shimura curves, $L$-functions, Jochnowitz congruences}

\maketitle

\section{Introduction}

The goal of the present article is to use the theory of congruences between modular forms to relate, in the context of elliptic curves, Zhang's formula of Gross--Zagier type for central derivatives of $L$-functions and his formula for special values. This extends previous results of Bertolini and Darmon for Heegner points on modular curves (\cite{BD-ajm}) to the situation where one needs to work with Shimura curves attached to division quaternion algebras. More precisely, let $E$ be an elliptic curve over $\Q$ of conductor $N=MD$ where $D>1$ is a square-free product of an \emph{even} number of primes and $(M,D)=1$. By modularity, $E$ is associated with a normalised newform $f=f_E$ of weight $2$ for $\Gamma_0(N)$, whose $q$-expansion will be denoted by
\begin{equation} \label{q-expansion-eq}
f(q)=\sum_{n\geq1}a_n(f)q^n,\qquad a_n(f)\in\Z. 
\end{equation}
Let $K$ be an imaginary quadratic field, with ring of integers $\cO_K$ and discriminant coprime to $N$, in which the primes dividing $M$ (respectively, $D$) split (respectively, are inert); in other words, $K$ satisfies a \emph{modified Heegner hypothesis} relative to $E$. Let $X_0^D(M)$ be the (compact) Shimura curve over $\Q$ of discriminant $D$ and level $M$, and write $J_0^D(M)$ for its Jacobian variety. As recalled in \S \ref{JL-subsec}, the modularity of $E$ and the Jacquet--Langlands correspondence allow us to introduce a parametrisation
\[ \Pi_E:J_0^D(M)\longrightarrow E \]  
defined over $\Q$. The theory of complex multiplication produces a Heegner divisor class $P_K\in J_0^D(M)(K)$, and we define $\alpha_K:=\Pi_E(P_K)\in E(K)$ (see \S \ref{heegner-subsec}). The Heegner point $\alpha_K$ will play a key role in the formulation of our results, which now we briefly describe. 

After fixing in \S \ref{p-subsec} a suitable odd ``descent prime'' $p$, in \S \ref{l-subsec} we choose a Kolyvagin prime $\l$ relative to the data $(E,K,p)$. In particular, $\l$ is inert in $K$ and $p$ divides both $\l+1$ and $a_\l(f)$. As in \cite{BD-ajm}, the basic idea is to study Hecke congruences modulo $p$ between $f$ and modular forms of level $M\l$ (or rather, in our situation, between any quaternionic modular form of discriminant $D$ and level $M$ associated with $f$ via the Jacquet--Langlands correspondence and quaternionic modular forms of discriminant $D$ and level $M\l$). To do this, let $X_0^D(M\l)$ be the Shimura curve over $\Q$ of discriminant $D$ and level $M\l$, whose Jacobian we denote $J_0^D(M\l)$, and let $\T$ be the Hecke algebra of level $M\l$ acting on $J_0^D(M\l)$. Adopting the usual notation for Hecke operators and writing $-\epsilon$ for the sign in the functional equation for the $L$-function $L(E,s)$ of $E$, we introduce the maximal ideal $\m$ of $\T$ of residual characteristic $p$ defined by 
\[ \m:=\big\langle p;\quad T_r-a_r(f),\;r\nmid N\l;\quad U_q-a_q(f),\;q|M;\quad U_\l-\epsilon\big\rangle. \] 
Then we denote by $\T_\m$ the completion of $\T$ at $\m$ and by $I$ the kernel of the natural map $\T\rightarrow\T_\m$. As in \cite{BD-ajm}, we associate with $\m$ the quotient $J$ of $J_0^D(M\l)$ defined as
\[ J:=J_0^D(M\l)\big/IJ_0^D(M\l). \]
The abelian variety $J$ is the counterpart in our quaternionic setting of the abelian variety $J^{(\m)}$ introduced by Mazur in \cite[Ch. II, \S 10]{Ma}, the main difference being that here, as in \cite{BD-ajm}, the ideal $\m$ corresponds to an absolutely irreducible Galois representation modulo $p$ (thanks to the choice of $p$ made in \S \ref{p-subsec}) and hence is \emph{not} Eisenstein.

In \S \ref{J-subsec} we combine the Jacquet--Langlands correspondence with classical level raising results of Ribet to prove that $J$ is isogenous to $E^2\times J'$ where $J'$ is a non-zero abelian variety having purely toric reduction at $\l$. Moreover, the split or non-split nature of this reduction is controlled by $\epsilon$. The fact that $J'$ has purely toric reduction at $\l$ is important because it allows us to study the $\l$-adic points of $J'$ via the Tate--Mumford theory of non-archimedean uniformisation.

If $g$ is a Hecke eigenform of weight $2$ on $\Gamma_0^D(M\l)$ and $\cO_g$ is the ring generated by its Hecke eigenvalues then let $\phi_g:\T\rightarrow\cO_g$ be the (surjective) algebra homomorphism associated with $g$ and set $\m_g:=\phi_g(\m)$, which is a maximal ideal of $\cO_g$ (possibly equal to $\cO_g$ itself). The eigenform $g$ is said to be a form on $J$ (respectively, $J'$) if the abelian variety $A_g$ attached to $g$ by the Eichler--Shimura construction is a quotient of $J$ (respectively, $J'$).  

Now let $g$ be any form on $J'$ and notice that, since $\l$ in inert in $K$, the sign in the functional equation of $L_K(g,s)$ is $1$; in other words, in passing from level $M$ to level $M\l$ a sign of change occurs. In light of a formula of Gross (\cite{Gr}) later generalised by Daghigh and Zhang (\cite{Daghigh}, \cite{Zh2}), in \S \ref{algebraic-part-subsec} we define the \emph{algebraic part} $\LL_K(g,1)$ of the special value $L_K(g,1)$. More explicitly, $\LL_K(g,1)$ is introduced in terms of optimal embeddings of quadratic orders into Eichler orders of definite quaternion algebras and belongs to an $\cO_g$-module $\mathscr M$ that is locally free of rank $1$ at $\m_g$. Our definition is analogous to the one given in \cite[\S 4]{BD-ajm}, and the crucial property of this algebraic part is that $\LL_K(g,1)=0$ if and only if $L_K(g,1)=0$.

A simplified form of our main result, which extends \cite[Theorem 1.3]{BD-ajm} and is in fact a corollary of Theorem \ref{main-thm}, can be stated as follows. 

\begin{theorem} \label{main-intro-thm}
The image of $\alpha_K$ in $E(K_\l)/pE(K_\l)$ is nonzero if and only if
\[ \LL_K(g,1)\not\equiv0\pmod{\m_g} \]
for all forms $g$ on $J'$.
\end{theorem}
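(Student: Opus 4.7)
The plan is to translate the nonvanishing of $\alpha_K$ in $E(K_\l)/pE(K_\l)$ into a reduction-map computation at $\l$ landing in the character group of the toric quotient $J'$ at $\l$, and then to identify that character group with a space of definite quaternionic modular forms on which Gross's formula computes the algebraic parts $\LL_K(g,1)$. What one really proves is, for each eigenform $g$ on $J'$, a Jochnowitz-style congruence equating (up to a $p$-adic unit) the reduction of $\alpha_K$ modulo $p$ with $\LL_K(g,1)\bmod\m_g$; the statement of the theorem then follows because all these quantities vanish together.

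First, the Kolyvagin conditions on $\l$, namely $p\mid \l+1$ and $p\mid a_\l(f)$, force $E(K_\l)/pE(K_\l)$ to be one-dimensional over $\F_p$ and identify it, via the finite/singular decomposition of local cohomology, with an invariant that can be read off from the reduction of $\alpha_K$ to the component group of a suitable N\'eron model at $\l$. To bring $\alpha_K$ into the level-$M\l$ world I would apply a degeneracy map $J_0^D(M)\to J_0^D(M\l)$ and project to $J$; the decomposition $J\sim E^2\times J'$ of \S\ref{J-subsec} shows that the $E^2$-component recovers $\alpha_K$ up to a controlled factor, while the $J'$-component records the information at $\l$ we wish to compute. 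Because $J'$ has purely toric reduction at $\l$, the Tate--Mumford uniformisation lets us study $J'(K_\l)/pJ'(K_\l)$ through the character group of $J'$ at $\l$.

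The next step is to invoke the Cerednik--Drinfeld uniformisation of $X_0^D(M\l)$ at $\l$, which identifies the $\m$-part of the character group of $J_0^D(M\l)$ at $\l$ with the $\m$-part of $\Z$-valued functions on the double coset space parametrising oriented Eichler orders of level $M$ in the \emph{definite} quaternion algebra of discriminant $D\l$, with matching Hecke actions via Jacquet--Langlands. Under this identification each eigenform $g$ on $J'$ corresponds to a definite quaternionic eigenform with eigensystem $\phi_g$, and Gross's formula (in the generalisation of Daghigh and Zhang) expresses $\LL_K(g,1)$ as the pairing of this eigenform with the explicit element built from optimal embeddings of $\cO_K$ into the relevant Eichler orders. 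Since $\l$ is inert in $K$, the Heegner CM point reduces at $\l$ to a vertex of the Bruhat--Tits tree governing the special fibre of $X_0^D(M\l)$, and that vertex is exactly Gross's optimal-embedding element; the sign $\epsilon$ selects which of the split or non-split pieces of the toric reduction carries this information, consistently with the change of functional-equation sign in going from level $M$ to $M\l$.

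I expect the main obstacle to be the precise matching of normalisations, orientations and Hecke actions along this chain of identifications, so that the reduction of $\alpha_K$ in the character group of $J'$ is congruent, $g$-eigencomponent by $g$-eigencomponent, to the element of Gross's formula computing $\LL_K(g,1)\bmod\m_g$, with any interfering factor a $p$-adic unit. The non-Eisenstein character of $\m$ ensured by the choice of $p$ in \S\ref{p-subsec} is what makes multiplicity one available across these identifications and keeps the ``if and only if'' clean. The overall argument follows the template of \cite{BD-ajm}, with classical modular curves replaced by the Shimura curves $X_0^D(M)$ and $X_0^D(M\l)$ and with Jacquet--Langlands systematically used to switch between indefinite and definite quaternion algebras.
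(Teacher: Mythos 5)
There is a genuine gap: your blueprint never explains how the local information at $\l$ carried by $\alpha_K$ actually reaches the character group of the toric part, and the mechanism you propose would fail. The point $\alpha_K$ is rational over $K_\l$, an \emph{unramified} extension of $\Q_\l$, and over such a field the $\m$-part of the component group of $J_0^D(M\l)$ (hence of $J$) is trivial, because that component group is Eisenstein and $\m$ is not; so ``the reduction of $\alpha_K$ to the component group of a suitable N\'eron model at $\l$'' carries no $\m$-adic content. Moreover, the image of $\alpha_K$ at level $M\l$ under the degeneracy maps lies in the $\l$-old part $E^2$ of the decomposition $J\sim E^2\times J'$, so there is no direct ``$J'$-component of $\alpha_K$'' to feed into the Tate--Mumford uniformisation. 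The paper bridges this gap with two ingredients absent from your proposal: (i) the ring class field $\mathcal L$ of conductor $\l$, whose completion $\mathcal L_\l$ is totally ramified over $K_\l$ of degree $(\l+1)/u$ divisible by $p$, so that over $\mathcal L_\l$ the component group acquires the nontrivial piece $X^*(T)\otimes(\Z/e\Z)$ and its $\m$-part becomes one-dimensional (Helm's multiplicity one); and (ii) the Heegner points $\mathcal P'_i$ of conductor $\l$ on $X_0^D(M\l)$, rational over $\mathcal L$, which on the one hand recover $\alpha_K$ via the norm and $\pi_{1,*}$ (Lemma \ref{deg-lemma}) and on the other hand reduce in $\Phi(J_{/\mathcal L_\l})_\m$ to exactly the Gross point $\psi_K$ computing $\LL_K(g,1)$ (Lemma \ref{lemma2}, via the Bertolini--Darmon result on specialisation of conductor-$\l$ CM points). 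The comparison is then made through the chain of isomorphisms $\boldsymbol i$, $\boldsymbol n$, $\boldsymbol p$ between one-dimensional $\epsilon$-eigenspaces (Propositions \ref{iso-prop}, \ref{n-iso-prop}, \ref{p-iso-prop}), whose proofs require the analysis of the kernel $V$ of $\varphi$ and of the auxiliary module $\tilde J$ over $\mathcal L_\l$; none of this is replaced by anything in your sketch.

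Two secondary inaccuracies: $E(K_\l)/pE(K_\l)$ is two-dimensional over $\F_p$ (Frobenius acts on $E[p]$ with eigenvalues $\pm1$ and $K_\l$ is the quadratic unramified extension); the one-dimensionality used in the paper is for the $\epsilon$-eigenspace of the $\m$-completion of $E^2(K_\l)$ with the twisted $U_\l$-action. Also, the uniformisation relevant at $\l$ is not Cerednik--Drinfeld (which applies at primes dividing $D$): since $\l$ exactly divides the level and $\l\nmid D$, the correct input is the semistable Drinfeld/Deligne--Rapoport-type model of $X_0^D(M\l)$ at $\l$, with $X^*(T)$ identified via Grothendieck--Raynaud with degree-zero divisors on the supersingular points, which indeed correspond to oriented Eichler orders of level $M$ in the definite algebra of discriminant $D\l$; that part of your dictionary is right, but the tool is mislabelled and, in the Gross-formula step, the CM points whose reductions give the optimal-embedding element are the conductor-$\l$ points over $\mathcal L$, not $\alpha_K$ itself.
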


As will be apparent later, our strategy to prove Theorem \ref{main-intro-thm} is inspired by the arguments in \cite{BD-ajm}, and is actually an extension of these to a general quaternionic setting.

Now write $L_K(E,s)$ for the $L$-function of $E$ over $K$; the fact that $K$ satisfies our modified Heegner hypothesis ensures that the sign in the functional equation of $L_K(E,s)$ is $-1$ (see, e.g., \cite[Theorem 3.17]{Darmon} for a sketch of proof in the semistable case). By Zhang's formula of Gross--Zagier type (\cite[Theorem C]{Zh}), the Heegner point $\alpha_K$ encodes the central derivative $L'_K(E,1)$, hence Theorem \ref{main-intro-thm} can be viewed as providing a congruence modulo $\m$ between $L'_K(E,1)$ and $L_K(g,1)$. Congruences of this type (based on the sign-change phenomenon pointed out above) were first suggested by Jochnowitz (whence the title of the paper, see \cite{J}) and then studied and refined by, among others, Bertolini--Darmon and Vatsal (\cite{BD96}, \cite{BD05}, \cite{Vat2}). 

Some arithmetic consequences of Theorem \ref{main-intro-thm} are collected in \S \ref{consequences-subsec}. Here we remark that \cite[Theorem 4.2]{BD05} can be regarded as a generalisation of the main result of \cite{BD-ajm} to an Iwasawa-theoretic context. The strategy of proof of this result in \cite[\S 9]{BD05} follows closely the approach to Jochnowitz-type congruences proposed in \cite{Vat2}, avoiding the study of certain groups of connected components that appear both here and in \cite{BD-ajm}. Finally, we would like to point out that, in the setting of modular curves, Gross and Parson have established a link between the local $p$-divisibility of the Heegner point $\alpha_K$ at the prime $\l$ and the $p$-descent on a related abelian variety of level $N\l$ (see \cite{GP} for details).  

\section{Background on Shimura curves and Hecke algebras}

\subsection{Degeneracy maps and Hecke algebras} \label{degeneracy-subsec}

Let $B$ be the quaternion algebra over $\Q$ of discriminant $D$ and fix once and for all an isomorphism of $\R$-algebras
\begin{equation} \label{indefinite-iso-eq}
i_\infty:B\otimes_\Q\R\overset\cong\longrightarrow\M_2(\R),
\end{equation}
which exists because $B$ is indefinite (i.e., splits at the archimedean place of $\Q$). Let $\l$ be a prime number not dividing $N$, let $R(M\ell)\subset R(M)$ be Eichler orders of $B$ of level $M\ell$ and $M$, respectively, and let $\Gamma_0^D(M\ell)\subset\Gamma_0^D(M)$ be the corresponding groups of norm $1$ elements, so that
\[ X_0^D(M)=\Gamma_0^D(M)\backslash\mathcal H,\qquad X_0^D(M\ell)=\Gamma_0^D(M\ell)\backslash\mathcal H \]
as Riemann surfaces. There are two natural degeneracy maps
\[ \pi_1, \pi_2:X_0^D(M\ell)\longrightarrow X_0^D(M) \]
induced by the identity and the multiplication by $\omega_\ell$ on $\mathcal H$, respectively, where $\omega_\ell\in R(M\ell)$ has reduced norm $\ell$ (such an element normalises $\Gamma_0^D(M\ell)$). By covariant and contravariant functoriality, these degeneracy maps induce maps
\[ \pi_{1,*}, \pi_{2,*}:J_0^D(M\ell)\longrightarrow J_0^D(M),\qquad\pi_1^*, \pi_2^*:J_0^D(M)\longrightarrow J_0^D(M\ell) \]
between Jacobian varieties.

For any integer $S\geq1$ coprime to $D$ write $\T(S)$ for the Hecke algebra of level $S$, i.e., the subring of the endomorphism ring of $J_0^D(S)$ generated over $\Z$ by the Hecke operators $T_q$ with $q\nmid SD$ and $U_q$ with $q|S$. Then the degeneracy maps $\pi_1$ and $\pi_2$ satisfy the relations
\[ \pi_{1,*}\circ\pi_1^*=\pi_{2,*}\circ\pi_2^*=\ell+1,\qquad\pi_{2,*}\circ\pi_1^*=\pi_{1,*}\circ\pi_2^*=T_\ell. \]
Define an acton of $\T:=\T(M\ell)$ on $J_0^D(M)^2$ by letting the Hecke operators $T_q$ and $U_q$ for $q\not=\ell$ act diagonally and letting $U_\ell$ act by left multiplication by the matrix $\bigl(\begin{smallmatrix}T_\ell&\ell\\-1&0\end{smallmatrix}\bigr)$. Then let
\begin{equation} \label{pi-*-eq}
\pi^*:=\pi_1^*\oplus\pi_2^*:J_0^D(M)^2\longrightarrow J_0^D(M\ell),\qquad\pi_*:=(\pi_{1,*},\pi_{2,*}):J_0^D(M\ell)\longrightarrow J_0^D(M)^2 
\end{equation}
and set
\begin{equation} \label{tilde-pi-eq}
\tilde\pi_*:=\begin{pmatrix}1&-T_\ell\\0&1\end{pmatrix}\circ\pi_*. 
\end{equation}
The following result is proved as \cite[Lemma 2.1]{BD-ajm} (for a precise reference in the case of our interest, see \cite[p. 93]{Helm}).
\begin{lemma} \label{compatibility-lemma}
The maps $\pi^*$ and $\tilde\pi_*$ are compatible with the actions of $\T$ on $J_0^D(M\ell)$ and $J_0^D(M)^2$ defined above.
\end{lemma}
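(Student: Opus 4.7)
The plan is to verify $\T$-equivariance on a convenient set of ring generators of $\T$, splitting the argument into two cases. First, for the generators $T_r$ (with $r\nmid N\ell$) and $U_q$ (with $q\mid M$), the underlying Hecke correspondences depend only on the level structure at primes different from $\ell$, and hence commute with the degeneracy morphisms $\pi_1,\pi_2\colon X_0^D(M\ell)\to X_0^D(M)$ at the level of algebraic correspondences on Shimura curves. Consequently, each of $\pi_1^*,\pi_2^*,\pi_{1,*},\pi_{2,*}$ individually intertwines these operators on the two Jacobians, and compatibility with the diagonal action on $J_0^D(M)^2$ follows immediately for both $\pi^*$ and $\tilde\pi_*$.

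The heart of the proof concerns $U_\ell$. My plan is to first establish the four intertwining identities
\begin{align*}
U_\ell\circ\pi_1^* &= \pi_1^*\circ T_\ell-\pi_2^*, & U_\ell\circ\pi_2^* &= \ell\cdot\pi_1^*, \\
\pi_{2,*}\circ U_\ell &= T_\ell\circ\pi_{2,*}-\pi_{1,*}, & \pi_{1,*}\circ U_\ell &= \ell\cdot\pi_{2,*}
\end{align*}
on $J_0^D(M\ell)$. Granted the two pullback identities, the $\T$-equivariance of $\pi^*$ reduces to the two-line check
\[ U_\ell\bigl(\pi_1^*x+\pi_2^*y\bigr)=\pi_1^*T_\ell x-\pi_2^*x+\ell\,\pi_1^*y=\pi^*\bigl(T_\ell x+\ell y,\,-x\bigr), \]
which is exactly $\pi^*$ applied to the image of $(x,y)$ under $\smallmat{T_\ell}{\ell}{-1}{0}$. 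The analogous calculation for $\tilde\pi_*$ uses the two pushforward identities, and the twist matrix $\smallmat{1}{-T_\ell}{0}{1}$ appearing in the definition of $\tilde\pi_*$ is engineered precisely so that a cross-term $T_\ell^2\pi_{2,*}$ produced by the matrix action on the first component of $\pi_*$ is cancelled by an equal term arising from $T_\ell\,\pi_{2,*}U_\ell$. In this way the specific form of the matrix $\smallmat{T_\ell}{\ell}{-1}{0}$ together with the twist in the definition of $\tilde\pi_*$ is forced by the four identities above.

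I expect the main---and essentially only nontrivial---step to be the verification of these four $U_\ell$-identities in the Shimura curve setting. In the classical modular curve case (treated in \cite{BD-ajm}) they are proved by a direct correspondence-level analysis of $U_\ell$ on $X_0(M\ell)$ relative to the two projections to $X_0(M)$; in our quaternionic setting the same analysis applies, either via the moduli-theoretic description of $U_\ell$ in terms of $\ell$-isogenies of fake elliptic curves compatible with the Eichler-order action, or via the Brandt-matrix/adelic viewpoint. The reference \cite[p.~93]{Helm} cited in the statement of the lemma is designed to cover exactly this input, and once it is granted the remainder of the proof is the routine linear algebra sketched above.
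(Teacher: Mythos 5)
Your proposal is correct and follows essentially the same route as the paper, which simply defers to \cite[Lemma 2.1]{BD-ajm} and \cite[p.~93]{Helm}: the content of that reference is precisely the reduction to generators of $\T$, the trivial commutation away from $\ell$, and the four intertwining identities for $U_\ell$ against $\pi_1^*,\pi_2^*,\pi_{1,*},\pi_{2,*}$, followed by the matrix computation you sketch. The only point requiring care is that the exact shape of those four identities depends on the (Picard versus Albanese) convention for the Hecke action and on the labelling of $\pi_1,\pi_2$; the versions you state are the ones consistent with the relations $\pi_{i,*}\circ\pi_j^*$ recorded in the paper and with the matrix $\smallmat{T_\ell}{\ell}{-1}{0}$, so your check goes through as in \cite{BD-ajm}.
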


\subsection{Jacquet--Langlands and modularity} \label{JL-subsec}

Let $S_2(\Gamma_0^D(M))$ denote the $\C$-vector space of modular forms of weight $2$ on $\Gamma_0^D(M)$ (see \cite[\S 4.2]{Darmon}) and recall that $N=MD$. The Jacquet--Langlands correspondence (see, e.g., \cite[\S 4.5]{Darmon}, \cite[\S 1.4]{Helm-thesis}) gives a non-canonical isomorphism between $S_2^{\text{$D$-new}}(\Gamma_0(N))$ and $S_2(\Gamma_0^D(M))$ induced by a natural isomorphism at the level of Hecke algebras. Fix a form $f^\JL\in S_2(\Gamma_0^D(M))$ associated with $f$ via the Jacquet--Langlands correspondence; then $f^\JL$ is uniquely determined up to multiplication by elements in $\C^\times$. With notation as in \eqref{q-expansion-eq}, the form $f^\JL$ has the same Hecke eigenvalues as $f$ outside $D$, i.e.
\[ T_r(f^\JL)=a_r(f)f^\JL,\qquad U_q(f^\JL)=a_q(f)f^\JL \]
for all primes $r\nmid N$ and all primes $q|M$. 

The Jacquet--Langlands correspondence allows us to introduce a modular parametrisation of the elliptic curve $E$ by the Shimura curve $X_0^D(M)$. Following \cite{Zh} and \cite{Zh2}, let the \emph{Hodge class} be the unique $\xi_M\in\Pic(X_0^D(M))\otimes\Q$ of degree $1$ on which the Hecke operators at primes not dividing $M$ act as multiplication by their degree (see \cite[p. 30]{Zh} for an explicit expression of $\xi_M$ and \cite[\S 3.5]{CV} for a detailed exposition). Then one can define a map
\[ X_0^D(M)\lra J_0^D(M)\otimes\Q \]
by sending a point $x\in X_0^D(M)$ to the class $[x]-\xi_M$. Multiplying this map by a suitable integer $m\gg0$ gives a finite embedding
\begin{equation} \label{param-D-eq}
\iota_M:X_0^D(M)\lra J_0^D(M)
\end{equation}
defined over $\Q$ (cf. \cite[\S 3.5]{CV}), which we fix once and for all. Now choose a parametrisation
\[ \Pi_E:J_0^D(M)\longrightarrow E \]
defined over $\Q$, whose existence is guaranteed by the modularity of $E$, the Jacquet--Langlands correspondence and Faltings's isogeny theorem (see \cite[\S 4.6]{Darmon}, \cite[\S 3.4.4]{Zh}, \cite[\S 5]{Zh3}). Finally, set
\[ \pi_E:=\Pi_E\circ\iota_M:X_0^D(M)\longrightarrow E, \]
which is a surjective morphism defined over $\Q$. The map $\pi_E$ induces two maps
\begin{equation} \label{functoriality-maps-eq}
\pi_{E,*}=\Pi_E:J_0^D(M)\longrightarrow E,\qquad\pi_E^*:E\longrightarrow J_0^D(M) 
\end{equation}
by Albanese (i.e., covariant) and Picard (i.e., contravariant) functoriality, respectively. At the cost of replacing $E$ with an isogenous curve, from now on we shall always assume that $E$ is a strong Weil curve and that $\pi_E$ is a strong Weil parametrisation, in the sense that $\pi_{E,*}$ has connected kernel (or, equivalently, that $\pi_E^*$ is injective). 

\subsection{Enhanced QM surfaces and character groups of Jacobians} \label{enhanced-subsec}

In what follows we shall deal with two different quaternion algebras: $B$ is the \emph{indefinite} quaternion algebra over $\Q$ of discriminant $D$ introduced in \S \ref{degeneracy-subsec}, while $\mathcal B$ is the \emph{definite} quaternion algebra over $\Q$ of discriminant $D\ell$. The interplay between $B$ and $\mathcal B$ lies at the core of our subsequent considerations.

We refer the reader to \cite[\S 1]{BD98} for the notion of \emph{oriented} Eichler order. Let $R_1,\dots,R_t$ be representatives for the conjugacy classes of oriented Eichler orders of level $M$ in $\mathcal B$; we denote their classes $[R_i]$ and set
\[ \mathcal E:=\bigl\{[R_1],\dots,[R_t]\bigr\}. \]
Let $\mathscr M$ stand for the free abelian group over $\mathcal E$, i.e. $\mathscr M:=\Z[\mathcal E]$, the set of all formal $\Z$-linear combinations of the $[R_i]$. We want to describe a geometric interpretation of $\mathscr M$ in terms of abelian surfaces with quaternionic multiplication (QM surfaces, for short), whose definition is recalled, e.g., in \cite[\S 4]{BD98}. With a terminology analogous to that of \cite{Ri}, we give

\begin{definition} \label{enhanced-QM-dfn}
An \emph{enhanced QM surface} with $M$-level structure over a field $k$ is a pair $(A,C)$ where $A$ is a QM surface over $k$ and $C\subset A$ is a $k$-rational subgroup of order $M$ whose points over the algebraic closure of $k$ form a cyclic group.
\end{definition}

When $k$ is $\bar\F_\ell$ (or a finite extension of $\F_\ell$) we say that $(A,C)$ is an enhanced QM surface in characteristic $\ell$.

There is an evident notion of isomorphism between such enhanced QM surfaces. As in \cite[\S 5]{BD98}, write $\underline{\text{End}}(A)$ for the endomorphism ring of the pair $(A,C)$. It is a basic fact that if $(A,C)$ is an enhanced QM surface with $M$-level structure in characteristic $\ell$ then $\underline{\text{End}}(A)$ is (isomorphic to) either an order in an imaginary quadratic field or an Eichler order of level $M$ in $\mathcal B$ (cf. \cite[Theorem 4.2.1]{Ro}).

\begin{definition} \label{supersingular-enhanced-dfn}
An enhanced QM surface $(A,C)$ with $M$-level structure in characteristic $\ell$ is \emph{supersingular} if $\underline{\text{End}}(A)$ is an Eichler order of level $M$ in $\mathcal B$.
\end{definition}

The following result is a generalisation of \cite[Proposition 3.3]{Ri}.

\begin{proposition} \label{supersing-eichler-pro}
The set of isomorphism classes of supersingular QM surfaces in characteristic $\ell$ is in bijection with the set of oriented Eichler orders of level $M$ in $\mathcal B$.
\end{proposition}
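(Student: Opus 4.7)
The plan is to adapt Ribet's strategy from \cite{Ri} to the quaternionic setting: construct an explicit map from isomorphism classes of supersingular enhanced QM surfaces to oriented Eichler orders, and then invert it using an isogeny construction starting from a fixed base point.

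First I would define the forward map. Given a supersingular enhanced QM surface $(A,C)$ in characteristic $\ell$, assign to it the ring $\underline{\End}(A)$, which by Definition \ref{supersingular-enhanced-dfn} is already an Eichler order of level $M$ in $\mathcal B$. The orientation at each prime $q\mid M$ comes from the action of $\underline{\End}(A)$ on the cyclic subgroup $C[q]$ (which gives a character $\underline{\End}(A)\to\Z/q\Z$), while the orientations at the primes $q\mid D$ are inherited from the QM structure on $A$, and the orientation at $\ell$ is the one given by the action on the Dieudonn\'e module (equivalently, on the tangent space at the identity). One checks that isomorphisms of enhanced QM surfaces induce conjugation-equivalences of oriented Eichler orders, so the map is well defined on isomorphism classes.

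For injectivity, suppose $(A_1,C_1)$ and $(A_2,C_2)$ have conjugate oriented endomorphism rings. Since both surfaces are supersingular and isogenous (any two supersingular QM surfaces in characteristic $\ell$ are isogenous to $E_0^2$ for a fixed supersingular elliptic curve $E_0$), choose an isogeny $\varphi:A_1\to A_2$ compatible with the QM action; it induces an isomorphism of $\Q$-algebras $\underline{\End}(A_1)\otimes\Q\cong\underline{\End}(A_2)\otimes\Q$. The hypothesis that the oriented orders agree, together with the fact that $C_i$ is recovered from $\underline{\End}(A_i)$ as the unique cyclic $M$-subgroup stable under the order and carrying the prescribed orientation, lets us modify $\varphi$ by an element of $\underline{\End}(A_1)\otimes\Q$ so that it becomes an isomorphism of enhanced QM surfaces.

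For surjectivity, fix once and for all a base enhanced QM surface $(A_0,C_0)$ whose oriented endomorphism ring is a chosen reference Eichler order $R_0$ of level $M$ in $\mathcal B$. Given any oriented Eichler order $R$ of level $M$, choose a locally principal $(R_0,R)$-bimodule $I$ (which exists because, locally at every prime, two Eichler orders of the same level are conjugate), and define
\[ A:=A_0\big/A_0[I],\qquad C:=\text{image of }C_0\text{ in }A. \]
Then $(A,C)$ is an enhanced QM surface with $\underline{\End}(A)\cong R$ as oriented orders, proving surjectivity.

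The main obstacle will be a careful bookkeeping of the orientations, particularly at the prime $\ell$ and at the primes dividing $D$, because this is where the quaternionic setting genuinely differs from Ribet's elliptic-curve case: one must verify that the orientation induced on $\underline{\End}(A)$ by the Dieudonn\'e module and by the QM action on $A$ is compatible with the orientation arising via the bimodule construction. This compatibility is the content (in a different guise) of the Cerednik--Drinfeld style description of the supersingular locus of $X_0^D(M)$ in characteristic $\ell$, and once it is established the bijection follows by standard arguments.
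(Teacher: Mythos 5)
Your forward map is the same one the paper uses (the paper's own proof is little more than a pointer to Roberts' thesis: send $[(A,C)]$ to $\underline{\mathrm{End}}(A)$ with the orientation of [Ro, Remark 1.2.4], details in [Ro]), so the set-up and the surjectivity sketch via ideals/bimodules are in the right spirit. The problem is the injectivity step, which is where all the actual content of the proposition sits and where your argument has a genuine gap.

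Concretely, you assert that once the oriented orders are conjugate one can ``modify $\varphi$ by an element of $\underline{\mathrm{End}}(A_1)\otimes\Q$ so that it becomes an isomorphism of enhanced QM surfaces.'' The module of QM-equivariant, level-preserving homomorphisms $\Hom\bigl((A_1,C_1),(A_2,C_2)\bigr)$ is an invertible $\bigl(\underline{\mathrm{End}}(A_2),\underline{\mathrm{End}}(A_1)\bigr)$-bimodule, and knowing that its two orders are conjugate does \emph{not} imply that it is principal, i.e.\ that it contains an isomorphism; that implication is precisely what has to be proved. The acid test is that your argument, as written, never uses the orientations at all in this step, so it would equally ``prove'' the unoriented statement --- which is false: a supersingular enhanced QM surface and its Frobenius twist $\bigl(A^{(\ell)},C^{(\ell)}\bigr)$ have isomorphic endomorphism orders but are in general not isomorphic, and it is exactly the orientation at $\ell$ (swapped by Frobenius) that separates them; a similar role is played by the orientations at $q\mid M$ and $q\mid D$ in ruling out Atkin--Lehner-type identifications. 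The standard way to close this gap (Ribet's Proposition 3.3 in the elliptic case, Roberts in the QM case) is Deuring-style: fix a base object $(A_0,C_0)$ with order $R_0$, show that $(A,C)\mapsto\Hom\bigl((A_0,C_0),(A,C)\bigr)$ gives a bijection onto the set of left ideal classes of $R_0$, and then prove separately that ideal classes biject with conjugacy classes of \emph{oriented} Eichler orders of level $M$ in $\mathcal B$, the orientation flip at $\ell$ corresponding to the two-sided ideal of norm $\ell$ (equivalently to the Frobenius twist). Your auxiliary claim that $C_i$ can be recovered from $\underline{\mathrm{End}}(A_i)$ as ``the unique cyclic $M$-subgroup stable under the order'' is also unjustified and is not needed in that argument, since $\underline{\mathrm{End}}$ is by definition attached to the pair $(A_i,C_i)$.
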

\begin{proof} The bijection sends the class $[(A,C)]$ to the Eichler order $\underline{\text{End}}(A)$, which is oriented as in \cite[Remark 1.2.4]{Ro}. For details, see \cite{Ro}. \end{proof}

Write $\X_0^D(M)$ for Drinfeld's model of $X_0^D(M)$ over $\Z$ (\cite[\S 4.1]{Ro}) and let $\X:=\X_0^D(M)\times_\Z\Z_\ell$ be its base change to $\Z_\ell$. Since $\ell\nmid N$, the special fibre $\tilde\X$ of $\X$ is non-singular (\cite[\S 4.2]{Ro}) and its points over $\bar\F_\ell$ correspond to classes of enhanced QM surfaces in characteristic $\ell$. We say that $[(A,C)]\in\tilde\X_{/\bar\F_\ell}$ is \emph{supersingular} if $(A,C)$ is supersingular according to Definition \ref{supersingular-enhanced-dfn}.

Let $\Sigma$ be the set of supersingular points of $\tilde\X_{/\bar\F_\ell}$. We immediately get

\begin{proposition} \label{sigma-e-prop}
The sets $\Sigma$ and $\mathcal E$ are in bijection.
\end{proposition}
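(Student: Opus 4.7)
The statement is flagged as immediate, and indeed the plan is essentially to chain together the moduli-theoretic description of $\tilde\X_{/\bar\F_\ell}$ with Proposition \ref{supersing-eichler-pro}. I would proceed in three short steps.

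First, I would invoke the moduli interpretation of Drinfeld's model recalled just before the statement: the $\bar\F_\ell$-points of $\tilde\X$ are exactly the isomorphism classes $[(A,C)]$ of enhanced QM surfaces with $M$-level structure in characteristic $\ell$, and the subset $\Sigma$ consists, by definition, of those $[(A,C)]$ for which $(A,C)$ is supersingular in the sense of Definition \ref{supersingular-enhanced-dfn}. Thus $\Sigma$ is naturally identified with the set of isomorphism classes of supersingular enhanced QM surfaces with $M$-level structure over $\bar\F_\ell$.

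Second, I would apply Proposition \ref{supersing-eichler-pro} directly: the assignment
\[
[(A,C)]\longmapsto\underline{\End}(A),
\]
equipped with the orientation of \cite[Remark 1.2.4]{Ro}, gives a bijection between this set of isomorphism classes and the set of oriented Eichler orders of level $M$ in the definite quaternion algebra $\mathcal B$ of discriminant $D\ell$, taken up to the equivalence used in defining $\mathcal E$ (namely, conjugacy, which is the natural notion of isomorphism for oriented Eichler orders). Composing the two identifications yields a bijection $\Sigma\leftrightarrow\mathcal E$, sending a supersingular point to the class of the Eichler order $\underline{\End}(A)$.

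The only genuine point to be careful about — and the one I would expect to be the mild technical obstacle — is the compatibility of equivalence relations on the two sides: one must verify that two enhanced supersingular surfaces $(A,C)$ and $(A',C')$ in characteristic $\ell$ are isomorphic if and only if the corresponding oriented Eichler orders $\underline{\End}(A)$ and $\underline{\End}(A')$ are conjugate (as oriented orders) in $\mathcal B$. This is exactly the content of Proposition \ref{supersing-eichler-pro} as stated, but if one wanted to be fastidious one would unwind the definitions to check that no automorphisms of $(A,C)$ cause extra collapsing and that the orientation data match the conjugation equivalence used in the definition of $\mathcal E$. Once this is noted, the bijection $\Sigma\leftrightarrow\mathcal E$ follows without further work, justifying the ``we immediately get'' of the statement.
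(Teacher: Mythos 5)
Your argument is correct and follows the same route as the paper, which simply declares the proposition ``a reinterpretation of Proposition \ref{supersing-eichler-pro}'': identify $\Sigma$ with isomorphism classes of supersingular enhanced QM surfaces via the moduli interpretation of the special fibre, then apply the bijection $[(A,C)]\mapsto\underline{\End}(A)$ onto (classes of) oriented Eichler orders of level $M$ in $\mathcal B$. Your extra remark about matching the equivalence relations (isomorphism of enhanced surfaces versus conjugacy of oriented orders) is a reasonable point of care, and it is indeed absorbed into the statement of Proposition \ref{supersing-eichler-pro} as the paper uses it.
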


\begin{proof} A reinterpretation of Proposition \ref{supersing-eichler-pro}. \end{proof}

Now let $\J_0^D(M\ell)$ be the N\'eron model of $J_0^D(M\ell)$ over $\Z_\ell$ and let $\J_0^D(M\ell)^0$ be its identity component. The special fibre of $\J_0^D(M\ell)^0$ at $\ell$ is an extension of the abelian variety $J_0^D(M)^2$ over $\F_\ell$ by a torus $T$. The character group $X^*(T)$ of $T$ is a free abelian group of finite rank that inherits an action of the Hecke algebra $\T$. In fact, classical results of Grothendieck and Raynaud ensure that $X^*(T)$ is  isomorphic to the group $\Z[\Sigma]^0$ of degree zero divisors on $\Sigma$. By Proposition \ref{sigma-e-prop}, the free abelian group $\Z[\Sigma]$ is naturally identified with $\mathscr M$, hence an element in $X^*(T)$ will sometimes be viewed as a $\Z$-linear combination $\sum_jn_j[R_j]$ with $\sum_jn_j=0$.

Finally, note that $\mathscr M$ is equipped with a natural positive-definite scalar product
\begin{equation} \label{scalar-product-eq} 
\big\langle[R_i],[R_j]\big\rangle:=\frac{1}{2}\delta_{ij}\#R_j^\times.
\end{equation}
Equivalently, $\big\langle[R_i],[R_j]\big\rangle$ is the number of isomorphisms between $R_i$ and $R_j$. 

\subsection{Component groups and multiplicity one} \label{components-subsec}

Let $F$ be a finite extension of $\Q_\ell$ with ring of integers $\cO_F$ and ramification index $e=e_F$, and let $\J_0^D(M\ell)_F$ denote the N\'eron model of $J_0^D(M\ell)$ over $\cO_F$. The group $\Phi\bigl(J_0^D(M\ell)_{/F}\bigr)$ of connected components of the special fibre of $\J_0^D(M\ell)_F$ can be described canonically as the cokernel of the composition
\[ X^*(T)\overset{e}\longrightarrow X^*(T)\longrightarrow X^*(T)^\vee, \]
where the first map is multiplication by $e$ and the second one is the natural inclusion of $X^*(T)$ into $X^*(T)^\vee:=\Hom(X^*(T),\Z)$ arising from pairing \eqref{scalar-product-eq}. It follows that there is a short exact sequence
\begin{equation} \label{component-groups-eq}
0\longrightarrow X^*(T)\otimes(\Z/e\Z)\longrightarrow\Phi\bigl(J_0^D(M\ell)_{/F}\bigr)\longrightarrow\Phi\bigl(J_0^D(M\ell)_{/\Q_\ell}\bigr)\longrightarrow0, \end{equation}
the last map being induced from the norm if the extension $F/\Q_\ell$ is totally ramified. The Hecke module $\Phi(J_0^D(M\ell)_{/\Q_\ell})$ is Eisenstein (see \cite[p. 95]{Helm}), that is, its support consists exclusively of maximal ideals $\m$ of $\T$ that are Eisenstein in the sense that their associated residual Galois representations $\bar\rho_\m$ fail to be absolutely irreducible. In particular, we obtain

\begin{proposition} \label{non-eisenstein-prop}
The completion of $\Phi(J_0^D(M\l)_{/F})$ at a non-Eisenstein maximal ideal $\m$ of $\T$ is isomorphic as a Hecke module to the completion of $X^*(T)\otimes(\Z/e\Z)$ at $\m$.
\end{proposition}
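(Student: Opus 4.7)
The plan is to localise/complete the short exact sequence \eqref{component-groups-eq} at the maximal ideal $\m$ and exploit the hypothesis that $\m$ is non-Eisenstein to kill the rightmost term. All three objects in \eqref{component-groups-eq} are finite (or, in the case of the middle term, finitely generated over $\Z/e\Z$) Hecke modules, so completion at $\m$ is an exact functor on the category in which the sequence lives; moreover, the degeneracy-style description recalled in \S\ref{enhanced-subsec}, together with the canonical cokernel description of $\Phi(J_0^D(M\l)_{/F})$ in terms of the monodromy pairing, guarantees that every arrow in \eqref{component-groups-eq} is $\T$-equivariant.

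Carrying this out, I would first check (or simply remark, citing the description just before the statement) that the maps in \eqref{component-groups-eq} are compatible with the $\T$-action; completing at $\m$ then yields the short exact sequence of $\T_\m$-modules
\[ 0\longrightarrow\bigl(X^*(T)\otimes(\Z/e\Z)\bigr)_\m\longrightarrow\Phi\bigl(J_0^D(M\l)_{/F}\bigr)_\m\longrightarrow\Phi\bigl(J_0^D(M\l)_{/\Q_\l}\bigr)_\m\longrightarrow 0. \]
Next I would invoke the Eisenstein property of $\Phi\bigl(J_0^D(M\l)_{/\Q_\l}\bigr)$ recalled before the statement (and attributed to \cite{Helm}): every maximal ideal of $\T$ in the support of this component group is Eisenstein. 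Since by hypothesis $\bar\rho_\m$ is absolutely irreducible, $\m$ itself is non-Eisenstein, hence is not in the support of $\Phi\bigl(J_0^D(M\l)_{/\Q_\l}\bigr)$, and so its completion at $\m$ vanishes.

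Substituting this vanishing into the displayed sequence turns the first map into an isomorphism
\[ \bigl(X^*(T)\otimes(\Z/e\Z)\bigr)_\m\stackrel{\sim}{\longrightarrow}\Phi\bigl(J_0^D(M\l)_{/F}\bigr)_\m, \]
which is exactly the claim. There is essentially no obstacle here: the whole proof is a two-line application of an exact sequence to the non-Eisenstein hypothesis, with all the real work having been done in setting up \eqref{component-groups-eq} and in the results of \cite{Helm} that pin down the support of the $\Q_\l$-component group. The only point that merits a brief verification is the Hecke-equivariance of the maps in \eqref{component-groups-eq}, which follows from the intrinsic, functorial nature of both the monodromy pairing and the base-change map on N\'eron component groups.
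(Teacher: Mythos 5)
Your argument is exactly the paper's proof: complete the exact sequence \eqref{component-groups-eq} at $\m$, observe that $\Phi\bigl(J_0^D(M\l)_{/\Q_\l}\bigr)$ is Eisenstein (by the cited result of Helm) so its $\m$-adic completion vanishes, and conclude that the first map becomes an isomorphism. The extra remarks on Hecke-equivariance and exactness of completion are harmless elaborations of what the paper takes for granted.
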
  

\begin{proof} Passing to the $\m$-adic completions in \eqref{component-groups-eq} produces a short exact sequence of Hecke modules, and the claim is a consequence of the fact that the completion of $\Phi\bigl(J_0^D(M\ell)_{/\Q_\ell}\bigr)$ at the non-Eisenstein ideal $\m$ is trivial. \end{proof}   

Now we recall the notion of \emph{controllability} as introduced by Helm (\cite[Definition 6.4]{Helm}).

\begin{definition} \label{controllable-dfn}
Let $\m$ be a non-Eisenstein maximal ideal of $\T$ of residual characteristic $p$. Then $\m$ is \emph{controllable at $\l$} if one of the following conditions holds: 
\begin{enumerate}
\item $\bar\rho_\m$ is not finite at $\ell$;
\item $\bar\rho_\m$ is unramified at $\ell$, $\ell\not=p$ and $\bar\rho_\m(\mathrm{Frob}_\l)$ is not a scalar;
\item $\l=p$ and $p\not=2$;
\item $\l=p=2$ and the restriction of $\bar\rho_\m$ to a decomposition group at $2$ is not contained in the scalar matrices.
\end{enumerate}
\end{definition}

The following mutiplicity one result is the counterpart of \cite[Theorem 6.4]{Ri}.

\begin{theorem}[Helm] \label{1-dimensional-thm}
Suppose that $\m$ is a maximal ideal of $\T$ that is controllable at $\l$. Then the quotient $X^*(T)/\m X^*(T)$ is a one-dimensional vector space over $\T/\m$.
\end{theorem}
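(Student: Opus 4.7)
The plan is to adapt Ribet's argument (\cite[Theorem 6.4]{Ri}) for classical modular curves to the present quaternionic setting. By Nakayama's lemma the assertion is equivalent to showing that the localisation $X^*(T)_\m$ is a cyclic $\T_\m$-module. The starting point is the identification $X^*(T)\cong\Z[\Sigma]^0$ recalled in \S \ref{enhanced-subsec}, together with Proposition \ref{sigma-e-prop} identifying $\Z[\Sigma]$ with $\mathscr M$, so that the problem becomes intrinsic to the combinatorics of enhanced supersingular QM surfaces and the Hecke action on them.

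The main technical input is Grothendieck's description of the semistable reduction of $J_0^D(M\ell)$ at $\ell$. The monodromy filtration on the associated Tate module induces, after reduction modulo $\m$, a filtration on $J_0^D(M\ell)[\m]$ whose graded pieces are controlled on the one hand by a Tate twist of $X^*(T)/\m X^*(T)$, and on the other by the Tate modules of the abelian quotient $J_0^D(M)^2$. Since $\m$ is non-Eisenstein, the residual representation $\bar\rho_\m$ is absolutely irreducible, and a standard Boston--Lenstra--Ribet style argument shows that $J_0^D(M\ell)[\m]$ is isomorphic, as a $\T/\m[\GQ]$-module, to $\bar\rho_\m^{\oplus r}$ for some integer $r\geq 1$. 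The conclusion follows once one shows that the contribution of the toric part to this direct sum amounts to exactly one copy of $\bar\rho_\m$.

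The controllability hypotheses intervene precisely to pin down the local Galois behaviour at $\ell$ so that the toric sub-representation of $J_0^D(M\ell)[\m]$ cannot contain two copies of $\bar\rho_\m$. In case (1), the failure of $\bar\rho_\m$ to be finite at $\ell$ forces the monodromy to remain non-degenerate modulo $\m$; in case (2), the non-scalar nature of $\bar\rho_\m(\Frob_\ell)$ prevents the toric and unramified pieces from becoming Galois-isomorphic after reduction. Cases (3) and (4), where $\ell=p$, require a parallel treatment in which Grothendieck's monodromy filtration is replaced by the filtration furnished by $p$-adic Hodge theory, and the main obstacle will lie here: in equal residue characteristic the semistable reduction theory is substantially more delicate, and one must carefully combine the integral Drinfeld model $\X_0^D(M\ell)$ of \S \ref{enhanced-subsec} with a Fontaine--Laffaille style comparison of \'etale and crystalline realisations, exploiting in case (4) the hypothesis that the decomposition group at $2$ does not act via scalars. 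Once this local analysis is in place, a dimension count comparing the toric contribution with the full $\m$-torsion yields the one-dimensionality of $X^*(T)/\m X^*(T)$ over $\T/\m$.
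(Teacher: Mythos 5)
The paper does not actually prove this statement: it is quoted from Helm, and the ``proof'' is the citation \cite[Lemma 6.5]{Helm}. So your proposal should be judged as an attempt to reprove Helm's lemma by transposing Ribet's Theorem 6.4 to the quaternionic setting, and as it stands it is a programme with the decisive steps missing. After the (legitimate) Boston--Lenstra--Ribet reduction $J_0^D(M\l)[\m]\cong\bar\rho_\m^{\oplus r}$, your argument hinges on a ``dimension count comparing the toric contribution with the full $\m$-torsion'', which requires knowing $r$. But $r$ is not determined by controllability at $\l$: mod $p$ multiplicity one is known to fail for Jacobians of Shimura curves attached to division quaternion algebras (Ribet), the failures being caused by the primes $q\mid D$ of Cerednik--Drinfeld type reduction when $\bar\rho_\m(\Frob_q)$ is scalar -- a phenomenon the hypotheses of Definition \ref{controllable-dfn} (which constrain only the behaviour at $\l$) do nothing to exclude. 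This is exactly why one cannot import the count from the modular-curve case and why Helm's argument works locally at $\l$ with the monodromy/component-group exact sequences for $X^*(T)$ rather than through a global count of copies of $\bar\rho_\m$ in $J_0^D(M\l)[\m]$. Relatedly, your claim that in cases (1) and (2) ``the toric sub-representation cannot contain two copies of $\bar\rho_\m$'' is a restatement of what must be proved; you never show how non-finiteness at $\l$, or non-scalarity of $\bar\rho_\m(\Frob_\l)$, interacts with the filtration on $J_0^D(M\l)[\m]$ and with the pairing $X^*(T)\hookrightarrow\Hom(X^*(T),\Z)$ to force the bound.

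For cases (3) and (4) ($\l=p$) you explicitly concede that ``the main obstacle will lie here'' and gesture at a Fontaine--Laffaille style comparison; but the reduction at $\l=p$ is semistable, not good, so Fontaine--Laffaille is not applicable as stated, and this equal-characteristic case is precisely where the cited proof does genuine work. In summary, the strategy is plausible in outline, but none of the three essential points -- determination (or circumvention) of the multiplicity $r$, the local argument at $\l$ bounding the toric contribution, and the $\l=p$ case -- is actually carried out, so the proposal does not yet constitute a proof of the theorem.
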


\begin{proof} This is \cite[Lemma 6.5]{Helm}.  \end{proof}

\begin{remark}
Let $\T_\m$ be the completion of $\T$ at the ideal $\m$. By Nakayama's lemma, Theorem \ref{1-dimensional-thm} implies that the completion $X^*(T)\otimes_\T\T_\m$ is free of rank one over $\T_\m$.
\end{remark}

\section{A distinguished abelian variety}

\subsection{The auxiliary prime $p$} \label{p-subsec}

Let us fix once and for all an auxiliary prime number $p$ (the ``descent prime'') such that
\begin{enumerate}
\item the mod $p$ Galois representation $\bar\rho_{E,p}$ attached to $E$ is absolutely irreducible;
\item $p$ does not divide $6N$ or the degree $\deg(\pi_E)$ of the modular parametrisation $\pi_E$.
\end{enumerate}
Serre's ``open image theorem'' (\cite{Se}) guarantees that all but finitely many primes $p$ satisfy these two conditions. 

\subsection{Choice of $\l$ and level raising} \label{l-subsec}

Let $p$ be as in \S \ref{p-subsec}, let $E[p]$ be the group of $p$-torsion points of $E$ and write $K(E[p])$ for the smallest extension of $K$ containing the coordinates of these points. Moreover, let $\delta_K$ be the discriminant of $K$. Recall from \cite[p. 261]{BD-ajm} that a prime $\l$ is called a \emph{Kolyvagin prime} (relative to $E$, $K$ and $p$) if 
\begin{enumerate}
\item $\l\nmid N\delta_Kp$ (so that $\l$ is unramified in $K(E[p])$);
\item $\text{Frob}_\l\in\Gal(K(E[p])/\Q)$ belongs to the conjugacy class of complex conjugation.
\end{enumerate} 
In particular, $\l$ is inert in $K$ and $p$ divides both $\l+1$ and $a_\l(f)$. By \v{C}ebotarev's density theorem, there are infinitely many Kolyvagin primes relative to $(E,K,p)$. Note that every Kolyvagin prime $\l$ satisfies the claims of \cite[Lemma 7.1]{Helm} (see the proof of \cite[Lemma 7.1]{Helm}).

Fix a Kolyvagin prime $\l$ relative to $(E,K,p)$ and, as before, write $\T$ for the Hecke algebra $\T(M\l)$ of level $M\l$. As in \cite{BD-ajm}, our goal is to study certain modular forms of level $M\l$ that are congruent to $f$ (or, rather, to $f^\JL$) modulo $p$. Recall from the introduction that $-\epsilon$ is the sign in the functional equation for $L(E,s)$ and let $\m=\m_f$ be the maximal ideal of $\T$ of residual characteristic $p$ defined by
\[ \m:=\big\langle p;\quad T_r-a_r(f),\;r\nmid N\l;\quad U_q-a_q(f),\;q|M;\quad U_\l-\epsilon\big\rangle. \] 
If $M$ is a $\T$-module then denote $M_\m$ the completion of $M$ at $\m$, so that
\[ M_\m:=\varprojlim_nM/\m^nM. \]
The ideal $\m$ is said to be in the \emph{support} of $M$ if $M_\m\not=0$.

Since $\T$ is a finitely generated $\Z$-module, $\T_\m$ is a direct factor of the semilocal ring $\T\otimes\Z_p$; write
\begin{equation} \label{splitting-T-eq}
\T\otimes\Z_p=\T_\m\times\T',
\end{equation}
where $(\T')_\m=0$. Furthermore, let $I$ be the kernel of the natural map $\T\rightarrow\T_\m$.

\begin{remark}
Using Nakayama's lemma, it can be checked that $\m$ belongs to the support of $M$ if and only if the localisation of $M$ at $\m$ is not trivial.
\end{remark}

In Definition \ref{controllable-dfn} we explained the notion of controllability for (non-Eisenstein) maximal ideals of $\T$; here we record the following

\begin{lemma} \label{controllable-lem}
The ideal $\m$ is controllable at $\l$.
\end{lemma}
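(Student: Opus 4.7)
The plan is to verify condition (2) of Definition \ref{controllable-dfn}: that $\bar\rho_\m$ is unramified at $\l$, that $\l\neq p$, and that $\bar\rho_\m(\Frob_\l)$ is not a scalar matrix.

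First I would identify the residual representation $\bar\rho_\m$ with the mod $p$ representation $\bar\rho_{E,p}$ attached to $E$. By construction, $\m$ contains $T_r-a_r(f)$ for every prime $r\nmid N\l$, so the semisimplification of $\bar\rho_\m$ has Frobenius trace $a_r(f)\bmod p$ at all such $r$. Since $\bar\rho_{E,p}$ has the same Frobenius traces at all $r\nmid Np$, agreement on the infinite set of primes $r\nmid N\l p$ together with Chebotarev forces the two semisimplifications to be isomorphic; by hypothesis (i) of \S \ref{p-subsec}, $\bar\rho_{E,p}$ is absolutely irreducible, so both representations are themselves semisimple and $\bar\rho_\m\cong\bar\rho_{E,p}$. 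This identification, which already implies that $\m$ is non-Eisenstein, is the only slightly delicate point and is the place one must be careful because $\m$ lives at the raised level $M\l$.

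Next, by the definition of a Kolyvagin prime, $\l\nmid N\delta_K p$. In particular $\l\neq p$, and $E$ has good reduction at $\l$, whence $\bar\rho_{E,p}\cong\bar\rho_\m$ is unramified at $\l$. It remains to show that $\bar\rho_\m(\Frob_\l)$ is not a scalar. Since $\Frob_\l$ lies in the conjugacy class of complex conjugation $c$ in $\Gal(K(E[p])/\Q)$, the two elements act on $E[p]$ by conjugate matrices, so it suffices to analyse $\bar\rho_{E,p}(c)$. Because $c$ has order two, its eigenvalues on $E[p]\otimes\bar\F_p$ lie in $\{+1,-1\}$, and since $\det\bar\rho_{E,p}$ is the mod $p$ cyclotomic character we have $\det\bar\rho_{E,p}(c)=-1$; hence the eigenvalues are exactly $+1$ and $-1$.

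Finally, from $p\nmid 6N$ in \S \ref{p-subsec} we get $p\geq 5$, so $+1\not\equiv-1\pmod p$ and $\bar\rho_\m(\Frob_\l)$ has two distinct eigenvalues, hence is not scalar. This establishes condition (2) of Definition \ref{controllable-dfn} and concludes the proof. The argument is essentially bookkeeping; no serious obstacle arises once the identification $\bar\rho_\m\cong\bar\rho_{E,p}$ is in place.
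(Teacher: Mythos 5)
Your proposal is correct and follows essentially the same route as the paper: identify $\bar\rho_\m$ with $\bar\rho_{E,p}$ (hence non-Eisenstein by the absolute irreducibility assumption on $p$), then use the Kolyvagin condition that $\Frob_\l$ is conjugate to complex conjugation, whose eigenvalues $\pm1$ are distinct since $p$ is odd, to verify condition (2) of Definition \ref{controllable-dfn}. You simply spell out details the paper leaves implicit (the Chebotarev/Brauer--Nesbitt identification, unramifiedness at $\l$, and $\l\neq p$), which is fine.
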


\begin{proof} First of all, the Galois representation $\bar\rho_\m$ is isomorphic to $\bar\rho_{E,p}$, so it is absolutely irreducible by assumption (1) made on $p$ in \S \ref{p-subsec}. Therefore $\m$ is non-Eisenstein. On the other hand, $\l$ is a Kolyvagin prime, which means that $\text{Frob}_\l\in\Gal(K(E[p])/\Q)$ belongs to the conjugacy class of complex conjugation. Since complex conjugation acts on $E[p]$ with eigenvalues $\pm1$, it follows that $\bar\rho_\m(\text{Frob}_\l)$ is not a scalar. \end{proof} 

Finally, multiply the Hodge classes $\xi_M$ and $\xi_{M\l}$ by the same integer $m\gg0$ in order to define maps $\iota_M$ and $\iota_{M\l}$ as in \eqref{param-D-eq}, so that the natural square
\begin{equation} \label{iota-square-eq}
\xymatrix{X_0^D(M\l)\ar[r]^-{\iota_{M\l}}\ar[d]^-{\pi_1}&J_0^D(M\l)\ar[d]^-{\pi_{1,*}}\\X_0^D(M)\ar[r]^-{\iota_M}&J_0^D(M)}
\end{equation} 
is commutative. This is a consequence of the functoriality of the Hodge class, as explained in \cite[\S 3.5]{CV}.   

\subsection{The modular form $f_\l$} \label{f-l-subsec}

The cusp form $f\in S_2(\Gamma_0(N))$ associated with $E$ is not an eigenform for the Hecke algebra of level $N\ell$, because it fails to be an eigenform for the Hecke operator at $\l$. As in \cite[p. 267]{BD-ajm}, choose a root $\beta$ of the polynomial $X^2-a_\l(f)X+\l$ and define the (classical) modular form
\[ f_\l(z):=f(z)-\l/\beta f(\l z) \]
with coefficients in the imaginary quadratic order $\Z[\beta]$. Then $f_\l$ is an eigenform of level $N\l$ that is in the same old-class as $f$ and has eigenvalue $\beta$ at $\l$. Note that, since $\l$ is a Kolyvagin prime, $\text{Frob}_\l$ acts on $E[p]$ with eigenvalues $1$ and $-1$, that is
\[ X^2-a_\l(f)X+\l\equiv(X-1)\cdot(X+1)\pmod{p}. \]
Since $p$ is odd, it splits in $\Z[\beta]$ and is equal to the product $(p,\beta-1)\cdot(p,\beta+1)$.

Fix a form $f_\l^\JL\in S_2(\Gamma_0^D(M\l))$ corresponding, as in \S \ref{JL-subsec}, to $f_\l$ under the Jacquet--Langlands correspondence and let $\m_{f^\JL}$ be the ideal $(p,\beta-\epsilon)$ of $\Z[\beta]$. Then the maximal ideal $\m$ of \S \ref{l-subsec} is the inverse image of $\m_{f^\JL}$ under the homomorphism $\T\rightarrow\Z[\beta]$ determined by $f_\l^\JL$.  

\subsection{The abelian variety $J$} \label{J-subsec}

Let $I\subset\T$ be as in \S \ref{l-subsec} and consider the abelian variety
\[ J:=J_0^D(M\l)\big/IJ_0^D(M\l). \]
If $g$ is a Hecke eigenform of weight $2$ on $\Gamma_0^D(M\l)$, so that $g$ corresponds to a one-dimensional Hecke-stable subspace of $S_2^{\text{$D$-new}}(\Gamma_0(N\l))$ by Jacquet--Langlands, and $\cO_g$ is the ring generated by its Hecke eigenvalues then let $\phi_g:\T\rightarrow\cO_g$ be the (surjective) algebra homomorphism associated with $g$ (cf. \cite[Theorem 1.2]{BD96}), whose kernel we denote $I_g$. Observe that, with notation as in \S \ref{f-l-subsec}, $\Z[\beta]=\cO_{f_\l^\JL}\cong\T/I_{f_\l^\JL}$. 

\begin{definition} \label{form-on-J-dfn}
The form $g$ is a \emph{form on $J$} if the following equivalent conditions hold:
\begin{enumerate}
\item the abelian variety $A_g:=J_0^D(M\l)\big/I_g J_0^D(M\l)$ associated with $g$ is a quotient of $J$;
\item $I\subset I_g$;
\item the ideal $\m_g:=\phi_g(\m)$ is a proper maximal ideal of $\cO_g$ and
\[ a_n(f^\JL_\ell)\pmod{\m_{f^\JL}}=a_n(g)\pmod{\m_g} \]
for all integers $n\geq1$ coprime to $D$.
\end{enumerate}
\end{definition}

The absolute Galois group $G_\Q$ of $\Q$ acts on the Hecke eigenforms of weight $2$ on $\Gamma_0^D(M\l)$ and the abelian variety $A_g$ in Definition \ref{form-on-J-dfn} depends only on the Galois orbit $[g]$ of $g$. Let $t$ be the dimension of $J_0^D(M\l)$, let $\mathcal F=\bigl\{g_1,\dots,g_t\}$ be a basis of $S_2(\Gamma_0^D(M\l))$ consisting of eigenforms for $\T$ and assume (at the cost of renumbering) that $\{g_1,\dots,g_m\}$ is a set of representatives for the set of orbits of $\mathcal F$ under the action of $G_\Q$. Let us fix an isogeny
\begin{equation} \label{isogeny-eq}
J_0^D(M\l)\sim\prod_{i=1}^mA_{g_i}. 
\end{equation}
Assuming also that $\{g_1,\dots,g_d\}$ is a set of representatives for the distinct Galois orbits of eigenforms on $J$ according to Definition \ref{form-on-J-dfn}, it follows from \eqref{isogeny-eq} that there is an isogeny
\[ J\sim\prod_{i=1}^dA_{g_i}. \]
Since the prime $p$ does not divide the degree of the modular parametrisation $\pi_E$ (cf. \S \ref{p-subsec}), there is only one oldform (up to the Galois action) that is congruent to $f_\ell^\JL$ modulo $p$ (that is, the Hecke eigenvalues are congruent modulo $p$), namely $f_\l^\JL$ itself. 

Finally, the abelian variety $A_{f_\l^\JL}=J_0^D(M\l)\big/I_{f_\l^\JL}J_0^D(M\l)$ is isogenous to $E\times E$, hence there is an isogeny 
\begin{equation} \label{J-isogeny-eq}
J\sim E^2\times\prod_{[g]}A_g
\end{equation}
where the product is taken over the Galois orbits of the eigenforms on $J$ that are new at $\l$, in the sense that the classical cusp forms associated with them by the Jacquet--Langlands correspondence are new at $\l$.

As in \cite{BD-ajm}, we give a description of such an isogeny. Recall from \eqref{functoriality-maps-eq} that $\pi_E^*:E\rightarrow J_0^D(M)$ is the map induced from $\pi_E$ by Picard functoriality. The map $\pi_E^*$ induces in turn a morphism $E^2\rightarrow J_0^D(M)^2$, also denoted $\pi_E^*$. Consider the composition
\begin{equation} \label{phi-E-eq}
\varphi_E:E^2 \xrightarrow{\pi_E^*}J_0^D(M)^2\overset{\pi^*}\longrightarrow J_0^D(M\l)\overset{\pi_J}\longrightarrow J,
\end{equation}
where $\pi^*$ is as in \eqref{pi-*-eq} and $\pi_J$ is the canonical projection. Thanks to Lemma \ref{compatibility-lemma}, the map $\varphi_E$ is $\T$-equivariant with respect to the action  of $\T$ on $E^2$ defined by letting the operators $T_r$ for $r\nmid N\l$ and $U_r$ for $r|M$ act by multiplication by $a_r(f)$ and letting $U_\l$ act by left mutiplication by the matrix $\bigl(\begin{smallmatrix}a_\l(f)&\l\\-1&0\end{smallmatrix}\bigr)$. 

\subsection{The $\l$-new subvariety and its reduction}

Recall that an abelian variety $A$ over $\Q$ is said to have \emph{purely toric} (or \emph{purely multiplicative}) reduction at a prime $q$ if the identity component of the special fibre of the N\'eron model of $A$ over $\Z_q$ is a torus. Moreover, if this torus is a \emph{split} (respectively, \emph{non-split}) torus over $\mathbb F_q$ then $A$ is said to have purely \emph{split} (respectively, \emph{non-split}) toric reduction at $q$. Let $J_0^D(M\l)^{\text{$\l$-new}}$ be the $\l$-new subvariety of $J_0^D(M\l)$, that is, the kernel of the map $\pi_*$ introduced in \eqref{pi-*-eq}. 

The following result is well known to the experts.

\begin{proposition} \label{toric-maximal}
The abelian variety $J_0^D(M\l)^{\text{$\l$-new}}$ has purely toric reduction at $\l$; it is the maximal toric subvariety of $J_0^D(M\l)$ at $\l$.
\end{proposition}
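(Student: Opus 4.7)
The plan is to prove purely toric reduction first, then maximality, working throughout with N\'eron models over $\Z_\l$.

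For purely toric reduction, the key input is the isogeny decomposition
\[ J_0^D(M\l) \sim J_0^D(M)^2 \times J_0^D(M\l)^{\l\text{-new}} \]
induced by $\pi^*$ together with the inclusion of the $\l$-new subvariety. This decomposition follows from the formulas of \S \ref{degeneracy-subsec}, which show that $\pi_* \circ \pi^*$ equals multiplication by the matrix $\bigl(\begin{smallmatrix} \l+1 & T_\l \\ T_\l & \l+1\end{smallmatrix}\bigr)$, whose determinant $(\l+1)^2 - T_\l^2$ is an isogeny of $J_0^D(M)$ because the Weil bound $|a_\l(f')| \leq 2\sqrt{\l} < \l+1$ holds for every eigenform $f'$ on $\Gamma_0^D(M)$. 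Since $\l \nmid MD$, the factor $J_0^D(M)^2$ has good reduction at $\l$, hence toric rank zero. Because the toric rank is additive under isogeny, the toric rank of $J_0^D(M\l)^{\l\text{-new}}$ equals that of $J_0^D(M\l)$, which by \S \ref{enhanced-subsec} equals $\mathrm{rank}_{\Z} X^*(T) = \#\Sigma - 1$. Comparing dimensions in the extension
\[ 0 \longrightarrow T \longrightarrow \J_0^D(M\l)^0_{\F_\l} \longrightarrow J_0^D(M)^2_{\F_\l} \longrightarrow 0 \]
from \S \ref{enhanced-subsec} yields $\dim J_0^D(M\l) = \#\Sigma - 1 + 2\dim J_0^D(M)$, so the isogeny decomposition forces $\dim J_0^D(M\l)^{\l\text{-new}} = \#\Sigma - 1$. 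Hence the toric rank of $J_0^D(M\l)^{\l\text{-new}}$ matches its dimension, meaning the reduction is purely toric.

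For maximality, suppose $A \subseteq J_0^D(M\l)$ is an abelian subvariety with purely toric reduction at $\l$, and set $B := \pi_*(A) \subseteq J_0^D(M)^2$. By Poincar\'e reducibility applied to the surjection $A \twoheadrightarrow B$, there is an isogeny $A \sim B \times K^0$, where $K^0$ is the identity component of $\ker(\pi_*|_A)$. Writing $\mathrm{tor}(\cdot)$ for the toric rank, additivity under isogeny combined with $\mathrm{tor}(\cdot) \leq \dim(\cdot)$ gives
\[ \dim A = \mathrm{tor}(A) = \mathrm{tor}(B) + \mathrm{tor}(K^0) \leq \dim B + \dim K^0 = \dim A, \]
forcing both $B$ and $K^0$ to have purely toric reduction. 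But $B$ is an abelian subvariety of $J_0^D(M)^2$, which has good reduction at $\l$; the same additivity argument applied to a Poincar\'e decomposition $J_0^D(M)^2 \sim B \times C$ shows that $B$ itself has good reduction, i.e.\ zero toric rank. Combined with being purely toric, this forces $B = 0$, whence $A \subseteq \ker \pi_* = J_0^D(M\l)^{\l\text{-new}}$.

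The main obstacle will be ensuring that the technical inputs on N\'eron models are properly justified, in particular the additivity of toric rank under isogeny and the descent of good reduction to abelian subvarieties. Both are standard consequences of the Grothendieck--Raynaud theory of semi-abelian models and should be cited precisely in the full proof.
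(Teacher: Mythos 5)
Your proof is correct, but it follows a genuinely more self-contained route than the paper. The paper's own argument is short and citation-based: it passes to the $\l$-new \emph{quotient} (the cokernel of $\pi^*$), which is isogenous to $J_0^D(M\l)^{\text{$\l$-new}}$, records the exact sequence $0\to T\to\J_0^D(M\l)^0_{/\F_\l}\to\J_0^D(M)_{/\F_\l}\times\J_0^D(M)_{/\F_\l}\to0$ coming from Drinfeld's model, and then asserts that the argument of Ling's Proposition 1 for classical modular Jacobians applies \emph{mutatis mutandis}. You instead reconstruct the whole argument: the Ramanujan--Weil bound makes $(\l+1)^2-T_\l^2$ an isogeny of $J_0^D(M)$ and hence splits $J_0^D(M\l)$ up to isogeny into its $\l$-old and $\l$-new parts; isogeny-invariance and additivity of the toric rank, combined with the dimension count $\dim T=\#\Sigma-1$ from the displayed exact sequence, force the toric rank of the new part to equal its dimension; and Poincar\'e reducibility gives maximality. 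This buys independence from Ling's paper at the cost of invoking the Grothendieck--Raynaud facts you flag (isogeny invariance of toric rank, descent of good reduction to subvarieties), which are indeed standard (SGA 7 I, via the inertia action on Tate modules) and should be cited as you propose. Two small points to tidy up: justify the bound $|a_\l(f')|\le2\sqrt\l$ for eigenforms on $\Gamma_0^D(M)$ either via Jacquet--Langlands and Deligne or directly via the Eichler--Shimura relation at the good prime $\l$; and note that the paper defines $J_0^D(M\l)^{\text{$\l$-new}}$ as $\ker\pi_*$, which need not be connected, so your conclusion $A\subseteq\ker\pi_*$ should be phrased as landing in its identity component --- harmless, since $A$ is connected and contains the origin.
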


\begin{proof}[Sketch of proof] Write $A$ for the $\l$-new quotient of $J_0^D(M\l)$, i.e., the cokernel of the map $\pi^*$ defined in \eqref{pi-*-eq}. First of all, since $J_0^D(M\l)^{\text{$\l$-new}}$ and $A$ are isogenous, to show that $J_0^D(M\l)^{\text{$\l$-new}}$ has purely toric reduction at $\l$ it suffices to show that the same is true of $A$. On the other hand, there is a short exact sequence
\[ 0\longrightarrow T\longrightarrow\J_0^D(M\l)^0_{/\mathbb F_\l}\longrightarrow\J^D_0(M)_{/\mathbb F_\l}\times\J^D_0(M)_{/\mathbb F_\l}\longrightarrow0 \]
where $\J_0^D(M\l)^0_{/\mathbb F_\l}$ denotes the identity component of the special fibre of the N\'eron model of $J_0^D(M\l)$ over $\Z_\l$, $T$ is the maximal torus in this fibre and $\J^D_0(M)_{/\mathbb F_\l}$ is the special fibre of the N\'eron model of $J_0^D(M)$ over $\Z_\l$ (this is a consequence of the properties of Drinfeld's model of $X_0^D(M\l)$ over $\Z_\l$; see, e.g., \cite[\S 4.3]{Ro} for details). Then the arguments in the proof of \cite[Proposition 1]{L}, where an analogous result for modular Jacobians is shown, apply \emph{mutatis mutandis} to our setting as well. \end{proof}

\subsection{The abelian variety $J'$}

Write $J'$ for the image of $J_0^D(M\l)^{\text{$\l$-new}}$ in $J$ and let $\varphi':J'\hookrightarrow J$ be the natural inclusion, then define an isogeny $\varphi$ as
\begin{equation} \label{phi-eq}
\varphi:=\varphi_E+\varphi':E^2\times J'\longrightarrow J. 
\end{equation}

The following result is the analogue of \cite[Proposition 3.1]{BD-ajm}. Note that, in our quaternionic context, it is necessary to invoke general level raising results obtained by Diamond and Taylor in \cite{DT} and not just those originally proved by Ribet in \cite{Ri84}. 

\begin{proposition} \label{mult-red-prop}
The abelian variety $J'$ is not trivial and has purely toric reduction at $\l$. Furthermore, this reduction is split if $\epsilon=1$ and is non-split if $\epsilon=-1$.
\end{proposition}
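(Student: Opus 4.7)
The plan is to establish the three claims of the proposition — non-triviality, purely toric reduction, and the split/non-split dichotomy — in that order.

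For non-triviality, I would invoke the level-raising theorem of Diamond and Taylor \cite{DT}, applied to $\bar\rho_\m\cong\bar\rho_{E,p}$. Since $\l$ is a Kolyvagin prime, $\Frob_\l$ acts on $E[p]$ as complex conjugation, so its characteristic polynomial $X^2-a_\l(f)X+\l$ factors as $(X-1)(X+1)$ modulo $p$; equivalently, the classical level-raising congruence $a_\l(f)^2\equiv(\l+1)^2\pmod p$ is satisfied. By \cite{DT} this produces a cuspidal newform $g$ of weight $2$ and level $N\l$, new at $\l$, with $a_r(g)\equiv a_r(f)\pmod{\m_g}$ for every prime $r\nmid N\l$. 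Applying Jacquet--Langlands (which is valid since $D$ is a product of an even number of primes and $\l\nmid M$) transfers $g$ to an eigenform $g^\JL$ on $\Gamma_0^D(M\l)$ that is new at $\l$ and satisfies Definition \ref{form-on-J-dfn}. Hence $A_{g^\JL}$ is a non-zero quotient of $J'$, so $J'\neq0$.

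For purely toric reduction, recall that by construction $J'$ is the image of $J_0^D(M\l)^{\text{$\l$-new}}$ in $J$. By Proposition \ref{toric-maximal}, $J_0^D(M\l)^{\text{$\l$-new}}$ has purely toric reduction at $\l$. Since purely toric reduction is preserved by isogenies and passage to abelian-variety quotients (the identity component of the special fibre of the N\'eron model of a quotient of a torus is again a torus), $J'$ inherits this property.

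The split/non-split dichotomy is the subtle point. Let $T_{J'}$ be the toric part of the special fibre of the N\'eron model of $J'$ over $\Z_\l$; the reduction is split (respectively, non-split) if and only if $\Frob_\l$ acts trivially (respectively, by $-1$) on the character group $X^*(T_{J'})$. This character group is a sub-$\T$-module of $X^*(T)$ (with $T$ as in \S \ref{enhanced-subsec}) supported on $\l$-new eigenforms. On such eigenforms $U_\l$ acts by $\pm 1$ and, by Atkin--Lehner theory, coincides with $-w_\l$; moreover, by the \v{C}erednik--Drinfeld uniformisation of $X_0^D(M\l)$ at $\l$ (cf.\ \cite[\S 4.3]{Ro} as already invoked in the proof of Proposition \ref{toric-maximal}), $\Frob_\l$ operates on the $\l$-new part of $X^*(T)$ via $U_\l$. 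Any eigenform $g$ on $J'$ lies in the support of $\m$, so by the definition of $\m$ (through the generator $U_\l-\epsilon$) its $U_\l$-eigenvalue reduces to $\epsilon$ modulo $\m_g$; since this eigenvalue is $\pm 1$ and $p$ is odd, it equals $\epsilon$ on the nose. Hence $\Frob_\l$ acts on $X^*(T_{J'})$ as multiplication by $\epsilon$, which yields the claimed dichotomy.

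The main obstacle I expect is the last step: correctly identifying the Galois action on $X^*(T_{J'})$ with the $U_\l$-action, with the right signs. This requires combining the non-archimedean uniformisation of the Shimura curve $X_0^D(M\l)$ at $\l$ with Atkin--Lehner theory and carefully tracking orientation conventions (the analogue in the quaternionic setting of the sign bookkeeping performed in \cite[\S 3]{BD-ajm}). The first step reduces to a direct application of \cite{DT} once the congruence on $\Frob_\l$ is unpacked, and the second step is essentially formal given Proposition \ref{toric-maximal}.
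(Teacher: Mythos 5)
Your overall route matches the paper's: Diamond--Taylor level raising for non-triviality, Proposition \ref{toric-maximal} together with the stability of purely toric reduction under quotients, and the standard dictionary between the $U_\l$-eigenvalue $\pm1$ of an $\l$-new eigenform and split/non-split reduction of $A_g$ (which the paper simply quotes, while you unfold it via the action of $\Frob_\l$ on character groups; the content of that unfolding is correct, but note that what is being used at $\l$ is the Drinfeld/Deligne--Rapoport-type semistable model of $X_0^D(M\l)$ --- \v{C}erednik--Drinfeld uniformisation pertains to primes dividing the discriminant $D$, and $\l\nmid D$).

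The genuine gap is in the non-triviality step. The level-raising statement you invoke only controls the Hecke eigenvalues at primes $r\nmid N\l$, yet you conclude that $g^\JL$ ``satisfies Definition \ref{form-on-J-dfn}''. Condition (3) of that definition also constrains $n=\l$ (which is coprime to $D$): since $a_\l(f_\l^\JL)=\beta\equiv\epsilon\pmod{\m_{f^\JL}}$, a form on $J$ (hence on $J'$) must have $a_\l(g)=\epsilon$ on the nose, for the \emph{specific} sign $\epsilon$ coming from the functional equation of $L(E,s)$. This is not automatic from a congruence away from $\l$: because $p\mid\l+1$ and $\bar\rho_\m(\Frob_\l)$ has eigenvalues $1$ and $-1$, the residual Galois representation is compatible with either value of $a_\l(g)$, and an $\l$-new form congruent to $f$ away from $\l$ but with $a_\l(g)=-\epsilon$ has $\phi_g(U_\l-\epsilon)=-2\epsilon$ a unit, so $\m_g=\cO_g$ and $A_g$ contributes nothing to $J$. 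What is needed is the refined level-raising statement that $\m$, which contains $U_\l-\epsilon$, lies in the support of the $\l$-new quotient --- equivalently, that one can raise the level with \emph{prescribed} $U_\l$-eigenvalue $\epsilon$. This is exactly what the paper extracts from \cite[Lemma 7.1]{Helm} and \cite[Theorem 1.6.4]{Helm-thesis} (building on \cite{DT}); you should cite or prove such a statement to close this step.
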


\begin{proof} By \eqref{J-isogeny-eq} and Poincar\'e's complete reducibility theorem, there is an isogeny
\[ J'\sim\prod_{[g]}A_g, \]
where the product is over the Galois orbits of the eigenforms on $J$ which are new at $\l$. These are weight $2$ eigenforms $g$ on $\Gamma_0^D(M\l)$ that are new at $\l$, satisfy $\m_g\not=\cO_g$ and such that the congruence
\begin{equation} \label{cong-hecke-eq} 
a_n(f^\JL_\ell)\pmod{\m_{f^\JL}}=a_n(g)\pmod{\m_g} 
\end{equation}
holds for all $n\geq1$ coprime to $D$ (cf. Definition \ref{form-on-J-dfn}). Thanks to our choice of $\l$, level raising results of Diamond--Taylor (\cite{DT}) ensure that there exists a form $g$ that is new at $\l$ and satisfies \eqref{cong-hecke-eq} (see \cite[Theorem 1.6.4]{Helm-thesis} and \cite[Lemma 7.1]{Helm} for precise statements). It follows that the abelian variety $J'$ is not trivial. As for the part about reduction, $J'$ is (isomorphic to) a quotient of $J_0^D(M\l)^{\text{$\l$-new}}$, hence Proposition \ref{toric-maximal} implies that $J'$ has purely toric reduction at $\l$. Finally, with $g$ being any form as above, it is known that $a_\l(g)=1$ (respectively, $a_\l(g)=-1$) if and only if $A_g$ has split (respectively, non-split) multiplicative reduction at $\l$. But $a_\l(g)\equiv\epsilon\pmod{\m_g}$ and $p\not=2$, and the proposition is proved. \end{proof}

Let $K_\ell$ be the completion of $K$ at $\l\cO_K$. Proposition \ref{mult-red-prop} allows us to apply to $J'$ the Tate--Morikawa--Mumford theory of non-archimedean ($\l$-adic) uniformisation of abelian varieties with purely toric reduction (see \cite[Section III]{Ri76} for an exposition). In particular, we obtain

\begin{corollary} \label{mumford-coro}
\begin{enumerate}
\item Let $d$ be the dimension of $J'$. There is a short exact sequence
\[ 0\longrightarrow\Lambda\longrightarrow(K_\l^\times)^d\longrightarrow J'(K_\l)\longrightarrow0 \]
where $\Lambda$ is a full rank lattice in $(K_\l^\times)^d$.
\item Denote by $z\mapsto\bar z$ the action of complex conjugation on $K_\l$ and let $\Gal(K_\l/\Q_\l)=\langle\tau\rangle$ act on $K_\l^\times$ by $\tau(z):=\bar z^\epsilon$. Then the above exact sequence is $\Gal(K_\l/\Q_\l)$-equivariant.
\end{enumerate}
\end{corollary}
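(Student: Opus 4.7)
The plan is to invoke the Tate--Mumford theory of $\ell$-adic uniformisation for abelian varieties with purely toric reduction, and then track the Galois action carefully using the split/non-split dichotomy recorded in Proposition \ref{mult-red-prop}.

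For part (1), Proposition \ref{mult-red-prop} tells us that $J'$ has purely toric reduction at $\ell$, so the Tate--Mumford theorem (as presented in \cite[Section III]{Ri76}) yields a canonical rigid analytic uniformisation
\[ 0 \longrightarrow \Lambda \longrightarrow T \longrightarrow J'_{/\Q_\ell} \longrightarrow 0, \]
where $T$ is an algebraic torus over $\Q_\ell$ of the same dimension $d$ as $J'$, canonically related to the torus $\mathcal T_{/\F_\ell}$ arising as the connected component of the special fibre of the N\'eron model of $J'$ over $\Z_\ell$, and $\Lambda$ is a Galois-stable discrete subgroup of full rank. The torus $T$ becomes split over the smallest extension of $\Q_\ell$ whose residue field splits $\mathcal T$. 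Since $\ell$ is inert in $K$, the extension $K_\ell/\Q_\ell$ is unramified quadratic with residue field $\F_{\ell^2}$; in both of the situations distinguished by Proposition \ref{mult-red-prop} the torus $\mathcal T$ becomes split over $\F_{\ell^2}$, so $T_{/K_\ell}\cong\mathbb{G}_m^d$ and $T(K_\ell)\cong(K_\ell^\times)^d$. Passing to $K_\ell$-rational points in the uniformisation sequence, which is exact on points because $T$ is split over the complete field $K_\ell$, yields the desired short exact sequence.

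For part (2), the whole construction is canonical, so every arrow in the uniformisation is equivariant under $\Gal(K_\ell/\Q_\ell)$; the content of the claim is the explicit identification of the induced action of $\tau$ on $T(K_\ell)\cong(K_\ell^\times)^d$. By the duality between a torus and its character lattice, the $\tau$-action on $T(K_\ell)$ is determined by the $\tau$-action on $X^*(T)$, which in turn corresponds (via the compatibility between $T$ and $\mathcal T$) to the action of $\Frob_\ell$ on $X^*(\mathcal T)$. If $\epsilon=1$ then $\mathcal T$ is already split over $\F_\ell$, the character lattice has trivial Galois action, and $\tau$ acts on $T(K_\ell)$ by the natural rule $z\mapsto\bar z$. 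If $\epsilon=-1$ then $\mathcal T$ splits only over $\F_{\ell^2}$, so $T$ is the twist of $\mathbb{G}_m^d$ by the non-trivial character of $\Gal(K_\ell/\Q_\ell)$; on characters $\tau$ then acts by $-1$, and dualising gives the rule $z\mapsto\bar z^{-1}$ on $T(K_\ell)=(K_\ell^\times)^d$. Both formulas are summarised by $\tau(z)=\bar z^\epsilon$, as required.

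The main obstacle is the bookkeeping in part (2): one must be precise about how the $\Q_\ell$-form of $T$ translates into the Galois action on $T(K_\ell)$, and one must justify that the Frobenius action on $X^*(\mathcal T)$ is uniformly by $\pm 1$ with no mixed contributions. This uniformity reflects the structure of $J'$ that emerges from the proof of Proposition \ref{mult-red-prop}: every isogeny factor $A_g$ of $J'$ satisfies $a_\ell(g)=\epsilon$, so all the $A_g$ contribute to $\mathcal T$ the same type of Frobenius eigenvalue and the entire reduction torus is either globally split or globally non-split.
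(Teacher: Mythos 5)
Your argument is essentially the paper's: the corollary is stated there as an immediate consequence of Proposition \ref{mult-red-prop} together with the Tate--Morikawa--Mumford uniformisation theory cited from \cite[Section III]{Ri76}, which is exactly what you invoke. Your additional bookkeeping for part (2) — reducing the $\tau$-action on $T(K_\l)$ to the Frobenius action on the character lattice and using that every factor $A_g$ of $J'$ has $a_\l(g)=\epsilon$, so Frobenius acts uniformly by $\epsilon$ — is correct and simply makes explicit what the paper leaves implicit.
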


An analogous statement holds for any finite extension of $K_\l$. Now recall the map $\varphi_E$ defined in \eqref{phi-E-eq}.

\begin{lemma} \label{support-lemma}
The maximal ideal $\m$ is not in the support of the kernel of $\varphi_E$.
\end{lemma}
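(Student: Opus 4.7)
The plan is to prove that $\varphi_E$ is injective on the $\m$-torsion subgroup $E^2[\m]$; by Nakayama's lemma applied to the finite $\T$-module $\ker(\varphi_E)$, this is equivalent to the desired statement $\ker(\varphi_E)_\m=0$.

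The first step is to compute $E^2[\m]$ explicitly. Each generator $T_r-a_r(f)$ and $U_q-a_q(f)$ of $\m$ annihilates $E^2$ by construction of the $\T$-action in \S\ref{J-subsec}, while $U_\l$ acts through the matrix $\smallmat{a_\l(f)}{\l}{-1}{0}$. The Kolyvagin congruences $p\mid\l+1$ and $p\mid a_\l(f)$ reduce this matrix modulo $p$ to $\smallmat{0}{-1}{-1}{0}$, whose $\epsilon$-eigenspace is $\{(x,-\epsilon x):x\in E[p]\}$. Hence
\[ E^2[\m]=\bigl\{(x,-\epsilon x):x\in E[p]\bigr\}, \]
which is isomorphic to $E[p]$ as a $G_\Q$-module via $x\mapsto(x,-\epsilon x)$. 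Since $\bar\rho_{E,p}$ is absolutely irreducible by the choice of $p$ in \S\ref{p-subsec}, any $G_\Q$-equivariant homomorphism out of $E^2[\m]$ is either zero or injective, so it suffices to show that $\varphi_E$ does not vanish on $E^2[\m]$.

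Next, the plan is to pass to the intermediate map $\psi_E:=\pi^*\circ\pi_E^*:E^2\longrightarrow J_0^D(M\l)$. By Lemma \ref{compatibility-lemma} and the design of the $\T$-action on $E^2$ (which makes $\pi_E^*$ intertwine the actions on $E^2$ and on $J_0^D(M)^2$), the map $\psi_E$ is $\T$-equivariant; hence $\psi_E(E^2[\m])\subset J_0^D(M\l)[\m]$. From the ring decomposition \eqref{splitting-T-eq}, the ideal $I$ acts trivially on the $\m$-primary component of $J_0^D(M\l)[p^\infty]$, so $IJ_0^D(M\l)\cap J_0^D(M\l)[\m]=0$ and the canonical projection $\pi_J$ restricts to an injection on $J_0^D(M\l)[\m]$. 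Therefore the non-triviality of $\varphi_E$ on $E^2[\m]$ is equivalent to the non-triviality of $\psi_E$ on $E^2[\m]$.

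Finally, to show $\psi_E|_{E^2[\m]}$ is nonzero, I would combine the injectivity of $\pi_E^*$ (from the strong Weil hypothesis) with a vanishing of $\ker(\pi^*)$ after localisation at the non-Eisenstein ideal $\m$. In the classical modular setting of \cite{BD-ajm} this reflects the fact that the relevant kernel is Eisenstein (essentially the Shimura-subgroup calculation); in the quaternionic context, the analogous statement should follow from the character-group description of \S\ref{enhanced-subsec}, the multiplicity-one theorem of Helm (Theorem \ref{1-dimensional-thm}), and the controllability of $\m$ established in Lemma \ref{controllable-lem}. Once $\ker(\pi^*)_\m=0$ is in hand, $\psi_E$ is injective on $E^2[\m]$, its image is a nonzero subgroup of $J_0^D(M\l)[\m]$, and descending via $\pi_J$ completes the argument.

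The main obstacle is this last step, namely extracting the Eisenstein-type vanishing $\ker(\pi^*)_\m=0$ from the character-group and component-group machinery of \S\ref{enhanced-subsec}--\S\ref{components-subsec}; all other steps are either straightforward or formal consequences of the setup.
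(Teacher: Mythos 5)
Your overall route is the same as the paper's: injectivity of $\pi_E^*$ coming from the strong Weil hypothesis, an Eisenstein-type statement for the kernel of $\pi^*$, and the splitting $\T\otimes\Z_p=\T_\m\times\T'$ of \eqref{splitting-T-eq} to see that $\ker(\pi_J)=IJ_0^D(M\l)$ has support disjoint from $\m$. Your preliminary reduction to injectivity of $\varphi_E$ on $E^2[\m]$, together with the computation $E^2[\m]\cong E[p]$ and the irreducibility of $\bar\rho_{E,p}$, is a harmless and slightly more explicit variant of the paper's direct argument with supports, so that part is fine.

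The step you yourself flag as ``the main obstacle'' is, however, a genuine gap, and the route you suggest for it would not deliver it. The assertion that the support in $\T$ of $\ker\bigl(\pi^*:J_0^D(M)^2\to J_0^D(M\l)\bigr)$ consists only of Eisenstein maximal ideals is not a consequence of the character-group description of \S\ref{enhanced-subsec}, of Helm's multiplicity-one theorem (Theorem \ref{1-dimensional-thm}), or of the controllability of $\m$ (Lemma \ref{controllable-lem}): those statements concern $X^*(T)$ and component groups at level $M\l$ and say nothing about the kernel of the degeneracy map between the two Jacobians. The Eisenstein-ness of $\ker(\pi^*)$ is essentially an Ihara-type lemma for Shimura curves, a substantial theorem which in the quaternionic setting is due to Diamond and Taylor; the paper's proof simply invokes \cite[\S 3]{DT} at this point (in the classical modular case of \cite{BD-ajm} the corresponding input is Ihara's lemma together with the Shimura-subgroup computation). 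Without citing or proving this input your argument does not close; once it is supplied, your proof coincides with the paper's, dressed up with the explicit $\m$-torsion computation.
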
 

\begin{proof} Thanks to our assumption that $\pi_E$ is a strong Weil parametrisation (\S \ref{JL-subsec}), the map $\pi_E^*:E^2\rightarrow J_0^D(M)^2$ is injective. On the other hand, by the results in \cite{DT} (see, in particular, \cite[\S 3]{DT}), the support in $\T$ of the kernel of $\pi^*$ consists entirely of Eisenstein maximal ideals. Finally, if $M$ is a submodule of $IJ_0^D(M\l)(\bar\Q)$ then the action of $\T\otimes\Z_p$ on $M\otimes\Z_p$ factors through the ring $\T'$ appearing in \eqref{splitting-T-eq}, hence the support of $M$ is disjoint from $\m$. Since $\m$ is not Eisenstein (cf. Lemma \ref{controllable-lem}), we are done. \end{proof}

As a general notation, if $M$ is a module on which complex conjugation in $G_\Q$ acts then write $M^+$ (respectively, $M^-$) for the submodule of $M$ on which this involution acts as the identity (respectively, as $-1$).

Write $\cO_\l$ for the ring of integers of $K_\l$. The next result studies the kernel of the isogeny $\varphi$ introduced in \eqref{phi-eq}.

\begin{lemma} \label{kernel-lemma}
Let $V$ be the kernel of $\varphi$. Then
\begin{enumerate}
\item the map $V\rightarrow E^2$ induce by projection onto the first factor is injective, so that $V_{/K_\l}$ extends to a finite flat group scheme over $\cO_\l$;
\item the map $V(K_\l)^\epsilon_\m\rightarrow E^2(K_\l)^\epsilon_\m$ is an isomorphism.
\end{enumerate}
\end{lemma}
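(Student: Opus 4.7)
I would handle the two parts separately: part (1) is short, while part (2) follows from the Galois cohomology long exact sequence together with Hecke-equivariant analysis at the $\m$-completion.

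For (1), suppose $(x,y)\in V(\bar\Q)$ satisfies $x=0$. Then $\varphi(0,y)=\varphi'(y)=0$, and since $\varphi':J'\hookrightarrow J$ is the inclusion of an abelian subvariety it is injective on geometric points, forcing $y=0$. Hence the first projection $V\to E^2$ is injective. For the finite-flatness assertion, since $\l\nmid N$ the curve $E$ (and so $E^2$) has good reduction at $\l$, and its N\'eron model $\mathcal E^2$ over $\cO_\l$ is an abelian scheme. The scheme-theoretic closure of $V_{/K_\l}$ inside $\mathcal E^2$ is a closed subscheme of the finite flat group scheme $\mathcal E^2[n]$ (for $n$ the exponent of $V$), hence finite, and it is flat over the DVR $\cO_\l$ as a scheme-theoretic closure with flat generic fibre.

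For (2), I would take the $G_{K_\l}$-cohomology long exact sequence of $0\to V\to E^2\times J'\xrightarrow{\varphi} J\to 0$, and apply the exact functors of $\epsilon$-eigenspace (exact because $p$ is odd and complex conjugation in $G_\Q$ restricts, via the inertness of $\l$ in $K$, to the nontrivial element of $\Gal(K_\l/\Q_\l)$) and $\m$-adic completion. This yields the exact sequence
\[ 0\to V(K_\l)^\epsilon_\m\to E^2(K_\l)^\epsilon_\m\oplus J'(K_\l)^\epsilon_\m\xrightarrow{\,\varphi\,}J(K_\l)^\epsilon_\m, \]
in which the map of interest is the first projection restricted to $V(K_\l)^\epsilon_\m$. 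Injectivity is immediate from (1). For surjectivity, given $x\in E^2(K_\l)^\epsilon_\m$ one must construct $y\in J'(K_\l)^\epsilon_\m$ with $\varphi_E(x)=-\varphi'(y)$; because all maps are defined over $\Q$ and $\varphi'$ is injective, the eigenspace condition on $y$ is automatic once $y$ exists, reducing the task to proving the containment
\[ \varphi_E\bigl(E^2(K_\l)^\epsilon_\m\bigr)\subseteq \varphi'\bigl(J'(K_\l)_\m\bigr) \qquad\text{inside}\qquad J(K_\l)^\epsilon_\m. \]

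This containment is the principal obstacle; equivalently, one needs to show that the Hecke-equivariant composition $\psi:E^2\xrightarrow{\varphi_E}J\twoheadrightarrow J/J'$ vanishes on the $\m$-localised $\epsilon$-eigenspace of $E^2(K_\l)$. The ingredients I would combine are Lemma \ref{support-lemma} (so that $\varphi_E$ is injective on $\m$-completions), the Hecke-equivariance from Lemma \ref{compatibility-lemma} (which, after identifying $J/J'\sim E^2$ Hecke-equivariantly, constrains $\psi$ to commute with the $U_\l$-matrix $\bigl(\begin{smallmatrix}a_\l(f)&\l\\-1&0\end{smallmatrix}\bigr)$ acting on $E^2$), the Kolyvagin congruences $a_\l(f)\equiv 0$ and $\l\equiv-1\pmod p$, multiplicity one (Theorem \ref{1-dimensional-thm}, applicable by Lemma \ref{controllable-lem}), and the Mumford--Tate uniformisation of $J'$ in Corollary \ref{mumford-coro}, whose twisted action $\tau(z)=\bar z^\epsilon$ encodes the interaction of complex conjugation with the $\epsilon$-eigenspace condition. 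Combining these constraints forces $\psi$ to factor, after localisation at $\m$, through an endomorphism of $E^2$ divisible by $U_\l-\epsilon$, which then annihilates the $\epsilon$-eigenspace where $U_\l\equiv\epsilon\pmod\m$; a Nakayama argument promotes the mod-$\m$ vanishing to the integral statement and completes the proof.
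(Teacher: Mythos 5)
Part (1) of your proposal is correct and is essentially the intended argument (injectivity because $\varphi'$ is a closed immersion, then good reduction of $E$ at $\l$ and scheme-theoretic closure). The injectivity half of part (2) and the reduction of surjectivity to the containment $\varphi_E\bigl(E^2(K_\l)^\epsilon_\m\bigr)\subseteq\varphi'\bigl(J'(K_\l)\bigr)$ are also fine. The gap is in how you propose to prove that containment. Your concluding mechanism --- ``$\psi$ factors through an endomorphism divisible by $U_\l-\epsilon$, which annihilates the $\epsilon$-eigenspace where $U_\l\equiv\epsilon\pmod\m$, and Nakayama promotes the mod-$\m$ vanishing to the integral statement'' --- is not valid: on the $\m$-adic completion $U_\l-\epsilon$ is only topologically nilpotent, not zero, so divisibility by $U_\l-\epsilon$ gives at best vanishing modulo $\m$, and Nakayama cannot upgrade ``a map is zero mod $\m$'' to ``the map is zero'' (multiplication by $p$ on $\Z/p^2\Z$ is the standard counterexample). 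Moreover, two of the ingredients you invoke (Corollary \ref{mumford-coro} and Theorem \ref{1-dimensional-thm}) play no role in this lemma, while the two inputs that actually carry the proof are absent from your sketch.

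The paper's proof defers to \cite[Lemma 3.4]{BD-ajm}, and the two missing pillars are precisely the ones signalled there by the mention of $\tilde\pi_*$ and of $p\nmid\deg(\pi_E)$. First, using Lemma \ref{compatibility-lemma} one computes $\tilde\pi_*\circ\pi^*=\smallmat{\l+1-T_\l^2}{-\l T_\l}{T_\l}{\l+1}$ on $J_0^D(M)^2$; transporting this through $\pi_E^*$ (injective by the strong Weil hypothesis, with $p\nmid\deg(\pi_E)$ controlling the comparison) shows that the obstruction to $\varphi_E(x)$ lying in $J'$ is measured, up to factors prime to $p$, by the endomorphism $(\l+1)-a_\l(f)U_\l=(1-U_\l)(1+U_\l)$ of $E^2$, where $U_\l$ acts via $\smallmat{a_\l(f)}{\l}{-1}{0}$; in particular $E^2[U_\l^2-1]$ maps into $V$. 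Second, one must actually compute $E^2(K_\l)^\epsilon_\m$: since $p\neq\l$, reduction identifies $E(K_\l)\otimes\Z_p$ with $p$-power torsion on which complex conjugation acts as $\Frob_\l$, and the Kolyvagin condition ($X^2-a_\l(f)X+\l\equiv(X-1)(X+1)\bmod p$, with unit roots $\beta\equiv\epsilon$, $\bar\beta\equiv-\epsilon$ in $\Z_p$) shows that $E^2(K_\l)^\epsilon_\m$ is cyclic of order $p^t$ with $t=v_p(\beta-\epsilon)$, on which $U_\l$ acts by $\beta$; hence $U_\l^2-1$ acts by $\beta^2-1=(\beta-\epsilon)(\beta+\epsilon)$, whose valuation is exactly $t$, so it annihilates the whole group. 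It is this exact matching of exponents --- coming from the coincidence of complex conjugation with Frobenius, not from formal divisibility by $U_\l-\epsilon$ --- that yields surjectivity; without it your argument does not close.
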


\begin{proof} Using the map $\tilde\pi_*$ defined in \eqref{tilde-pi-eq} and the fact that $p\nmid\deg(\pi_E)$ by condition (2) in \S \ref{p-subsec}, one can proceed as in the proof of \cite[Lemma 3.4]{BD-ajm}. \end{proof}

\subsection{An interlude on ring class fields} \label{ring-class-subsec}

For later reference, we review some basic facts about ring class fields. Let $H=H_K$ be the Hilbert class field of $K$ and let $\mathcal L$ be the ring class field of $K$ of conductor $\l$, so that $\Gal(H/K)\cong\Pic(\cO_K)$ and if $\cO_\l$ is the order of $K$ of conductor $\l$ then $\Gal(\mathcal L/K)\cong\Pic(\cO_\l)$. The field $\mathcal L$ is a cyclic extension of $H$ of degree $(\l+1)/u$ where $u:=\#\cO_K^\times/2$. 

The Kolyvagin prime $\l$ is inert in $K$, hence it splits completely in $H/K$, by class field theory. Moreover, every prime of $H$ above $\l$ is totally ramified in $\mathcal L$. Write $\mathcal L_\l$ for the completion of $\mathcal L$ at any such prime above $\l$; then $\mathcal L_\l$ is a totally ramified cyclic extension of $K_\l$ of degree $(\l+1)/u$. Fix a generator $\sigma=\sigma_\l$ of $\Gal(\mathcal L_\l/K_\l)=\Gal(\mathcal L/H)$. The group $\Gal(\mathcal L_\l/\Q_\l)$ is isomorphic to a dihedral group of order $2(\l+1)/u$ and complex conjugation in $\Gal(K_\l/\Q_\l)$ acts on $\Gal(\mathcal L_\l/K_\l)$ by sending $\sigma$ to $\sigma^{-1}$.

\subsection{An analysis of component groups}

We introduce some more notations that will be used in the rest of the paper. In analogy with what done in \S \ref{components-subsec} for $J_0^D(M\l)$, if $F$ is a finite field extension of $K_\l$ with ring of integers $\cO_F$ then denote $\J_F$ (respectively, $\J'_F$) the N\'eron model of $J$ (respectively, $J'$) over $\cO_F$, and write $\Phi(J_{/F})$ (respectively, $\Phi(J'_{/F})$) for the component group of this N\'eron model. By a slight abuse of notation, we shall also write $J_0^D(M\l)(F)$ in place of $\J^D_0(M\l)(\cO_F)$, and similarly for $J$ and $J'$. In particular, $J^0(F)$ will denote the identity component in $\J(\cO_F)$ and so on. 

We need two more auxiliary results, which correspond to \cite[Lemmas 3.5 and 3.6]{BD-ajm}, on completions at $\m$ of component groups.

\begin{lemma} \label{Phi-iso-lemma}
The natural map
\[ \Phi\bigl(J_0^D(M\l)_{/F}\bigr)_{\!\m}\longrightarrow\Phi(J_{/F})_\m \]
is an isomorphism.
\end{lemma}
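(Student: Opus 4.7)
The plan is to deduce the statement from the short exact sequence of abelian varieties
\[ 0\longrightarrow K\longrightarrow J_0^D(M\l)\longrightarrow J\longrightarrow 0, \]
where $K$ is the identity component of $IJ_0^D(M\l)$, together with a careful analysis of what the $\m$-adic completion does to the induced sequence on component groups. Working over the ring of integers $\cO_F$ of $F$, a standard result on N\'eron models (see, e.g., the exact sequence associated with an exact sequence of abelian varieties with semistable reduction) gives a five-term complex of component groups
\[ 0\longrightarrow\Phi(K_{/F})'\longrightarrow\Phi\bigl(J_0^D(M\l)_{/F}\bigr)\longrightarrow\Phi(J_{/F})\longrightarrow 0 \]
whose failure of exactness is controlled by kernels supported on the isogeny $K\times J\to J_0^D(M\l)$ (these pieces can be handled by the argument below, since they are killed up to isogeny on the $\m$-part).

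The first step is to check that $\Phi(K_{/F})_\m=0$. The point is that, by construction, the Hecke algebra $\T$ acts on $K$ through its quotient $\T/I^\ast$, where $I^\ast$ is (essentially) the annihilator of $K$; after tensoring with $\Z_p$ and using the decomposition \eqref{splitting-T-eq}, the action of $\T\otimes\Z_p$ on $K\otimes\Z_p$ factors through the complementary direct factor $\T'$, for which $(\T')_\m=0$. Hence localising at $\m$ annihilates everything that lives on $K$, and in particular $\Phi(K_{/F})_\m=0$. Combined with the exact sequence above, this already gives surjectivity of the map in question.

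For injectivity, the idea is to produce a map in the opposite direction by exploiting the same decomposition $\T\otimes\Z_p=\T_\m\times\T'$. Choose an integer $n$ such that the quasi-idempotent $e_\m\in\T$ corresponding to the projection $\T\otimes\Z_p\twoheadrightarrow\T_\m$ lifts to an element whose action on $J_0^D(M\l)$ induces, together with the canonical projection $\pi_J$, an isogeny $J\to J_0^D(M\l)$ such that the composition with $\pi_J$ is multiplication by an integer prime to $p$ on the $\m$-part. This isogeny in the opposite direction induces a map $\Phi(J_{/F})\to\Phi(J_0^D(M\l)_{/F})$ whose composition (in either order) with the map of the lemma is multiplication by a $p$-adic unit after $\m$-completion; this forces injectivity and in fact shows that the map in the lemma is an isomorphism.

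The main obstacle will be the bookkeeping involved in passing from the exact sequence of abelian varieties to a clean exact sequence of component groups over $\cO_F$: the kernel of $K\times J\to J_0^D(M\l)$ is a finite group scheme that a priori contributes additional terms, and one has to verify that these terms vanish after $\m$-completion. The key observation that rescues the argument is that, by Lemma \ref{controllable-lem}, $\m$ is non-Eisenstein, while the kernels appearing are supported on the $p$-torsion of $K$, whose intersection with the $\m$-part is trivial precisely because $\T$ acts on $K$ through $\T'$; this is the same mechanism already invoked in the proof of Lemma \ref{support-lemma}, and allows the argument to be completed along the lines of \cite[Lemma 3.5]{BD-ajm}.
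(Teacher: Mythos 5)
Your proposal is correct in substance and rests on the same three inputs as the paper: the splitting $\T\otimes\Z_p=\T_\m\times\T'$ of \eqref{splitting-T-eq}, the fact (from the proof of Lemma \ref{support-lemma}) that anything built out of $IJ_0^D(M\l)$ has $\m$-support zero, and the behaviour of component groups under isogenies whose kernel is negligible at $\m$. The packaging, however, differs from the paper's, and one step of yours is shakier than it needs to be. The paper does not invoke any exactness of $\Phi$ for the short exact sequence $0\to IJ_0^D(M\l)\to J_0^D(M\l)\to J\to0$; instead it sets $I^\perp:=\mathrm{Ann}_\T(I)$, $J^\perp:=J_0^D(M\l)/I^\perp J_0^D(M\l)$, observes that the natural isogeny $J_0^D(M\l)\to J\times J^\perp$ has kernel contained in $IJ_0^D(M\l)$ (hence of $\m$-support zero), deduces an isomorphism $\Phi\bigl(J_0^D(M\l)_{/F}\bigr)_\m\cong\Phi(J_{/F})_\m\times\Phi\bigl(J^\perp_{/F}\bigr)_\m$, and finally kills $\Phi(J^\perp)_\m$ because $\T\otimes\Z_p$ acts on it through $\T'$. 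Your quasi-idempotent $e_\m$ is exactly the idempotent underlying this splitting, so your injectivity argument is a reformulation of the paper's use of $I^\perp$; note only that the resulting map $J\to J_0^D(M\l)$ is not an isogeny (the dimensions differ) but a homomorphism that is a section of $\pi_J$ up to multiplication by an integer prime to $p$ on $\m$-parts, which is all you need.

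The genuinely weak point is your opening appeal to a ``standard result on N\'eron models'' producing a short exact sequence of component groups from the exact sequence of abelian varieties: $\Phi$ is neither left nor right exact on such sequences in general, and no such clean sequence is available without further argument. Fortunately this claim is dispensable: your second mechanism (the quasi-idempotent section together with the vanishing of the $\m$-part of everything supported on $IJ_0^D(M\l)$) already gives both surjectivity and injectivity, just as the paper's product isogeny does. One further caveat common to both arguments: when passing from an isogeny to its effect on component groups, the kernel of the isogeny here typically has $p$-power order, so ``killed by the exponent of the kernel'' is not enough; one must use, as you and the paper do, that the kernel and cokernel on component groups are Hecke modules whose support lies in that of the kernel of the isogeny, which is disjoint from $\m$ since $\m$ is non-Eisenstein (Lemma \ref{controllable-lem}). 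With the exactness claim removed or replaced by the product-isogeny argument, your proof is sound and essentially equivalent to the paper's.
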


\begin{proof} Let $I^\perp:=\text{Ann}_\T(I)$ and $J^\perp:=J_0^D(M\l)/I^\perp$. The natural isogeny
\[ J_0^D(M\l)\longrightarrow J\times J^\perp \]
has a kernel that is contained in $IJ_0^D(M\l)$, whose support is then disjoint from $\m$ (cf. the proof of Lemma  \ref{support-lemma}). Therefore this isogeny induces an isomorphism
\[ \Phi\bigl(J_0^D(M\l)_{/F}\bigr)_{\!\m}\overset\cong\longrightarrow\Phi(J_{/F})_\m\times\Phi\bigl(J^\perp_{/F}\bigr)_{\!\m}. \] 
But the action of the algebra $\T\otimes\Z_p$ on $\Phi(J^\perp)\otimes\Z_p$ factors through the summand $\T'$ defined in \eqref{splitting-T-eq}, hence $\Phi(J^\perp)_\m=0$. \end{proof}

As in Lemma \ref{kernel-lemma}, let $V$ denote the kernel of the map $\varphi$ of \eqref{phi-eq}.

\begin{lemma} \label{surjection-lemma}
The natural map $V(K_\l)\rightarrow\Phi\bigl(J'_{/K_\l}\bigr)_{\!\m}$ is surjective.
\end{lemma}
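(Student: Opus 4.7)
\emph{Proof plan.} The strategy is to reduce the surjectivity to the vanishing of $\Phi(J_{/K_\l})_\m$ and then extract a preimage in $V$ from the isogeny $\varphi$ of \eqref{phi-eq}.

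First, I would show that $\Phi(J_{/K_\l})_\m = 0$. Since $\l$ is a Kolyvagin prime, it is inert in $K$, so $K_\l/\Q_\l$ is unramified with ramification index $e=1$. By Lemma \ref{Phi-iso-lemma}, the completion $\Phi(J_{/K_\l})_\m$ is isomorphic to $\Phi(J_0^D(M\l)_{/K_\l})_\m$, and Proposition \ref{non-eisenstein-prop} (applicable because $\m$ is non-Eisenstein by Lemma \ref{controllable-lem}) yields $\Phi(J_0^D(M\l)_{/K_\l})_\m \cong \bigl(X^*(T)\otimes\Z/\Z\bigr)_{\!\m} = 0$.

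Second, given $\phi \in \Phi(J'_{/K_\l})_\m$, I would lift it to a $K_\l$-rational point $P \in J'(K_\l)$. By Proposition \ref{mult-red-prop}, $J'$ has purely toric reduction at $\l$, so the identity component of the special fibre of its N\'eron model is a torus over $\mathbb F_{\l^2}$; since such a torus has dense rational points, every $\mathbb F_{\l^2}$-rational component of the special fibre contains a smooth point, which lifts through the smoothness of the N\'eron model to give $P$.

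Third, set $Q := \varphi'(P) \in J(K_\l)$. Its component class is the image $\varphi'_*(\phi) \in \Phi(J_{/K_\l})_\m$, which vanishes by the first step; after $\m$-completion one has $Q \in J^0(K_\l)_\m$. The plan is then to find $e \in E^2(K_\l)$ such that $\varphi_E(e) = -Q$, so that $(e, P) \in V(K_\l)$ maps to $\phi$ under the projection to $\Phi(J'_{/K_\l})$.

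The main obstacle is this final step: producing such an $e$. It amounts to a snake-lemma argument applied to $0 \to V \to E^2 \times J' \xrightarrow{\varphi} J \to 0$ in conjunction with the N\'eron models, using that $V$ extends to a finite flat group scheme over $\cO_\l$ by Lemma \ref{kernel-lemma}(1) and that $E$ has good reduction at $\l$, so $\Phi(E^2_{/K_\l}) = 0$. The output should be an exact sequence
\[ V(K_\l)_\m \longrightarrow \Phi\bigl((E^2 \times J')_{/K_\l}\bigr)_{\!\m} \cong \Phi(J'_{/K_\l})_\m \longrightarrow \Phi(J_{/K_\l})_\m = 0 \]
from which the claim follows at once. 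The delicacy lies in controlling the various error terms that a priori prevent the snake diagram from being exact; as in the proof of Lemma \ref{support-lemma}, the idea is to show that these terms have Hecke action factoring through the complementary factor $\T'$ of \eqref{splitting-T-eq} and hence have trivial $\m$-completion. The whole argument parallels that of \cite[Lemma 3.6]{BD-ajm}.
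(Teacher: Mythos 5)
Your proposal follows essentially the same route as the paper: the paper's own proof consists of the same commutative diagram relating $V(K_\l)$, $E^2(K_\l)\times J'(K_\l)$, $J(K_\l)$ and the component groups, the vanishing $\Phi(J_{/K_\l})_\m=0$ deduced from Lemma \ref{Phi-iso-lemma} together with the Eisenstein property of the component groups of $J_0^D(M\l)$ over unramified extensions of $\Q_\l$ (your appeal to Proposition \ref{non-eisenstein-prop} with $e=1$ is the same fact, since $\l$ is inert in $K$), followed by a deferral of the remaining lift-and-correct diagram chase to \cite[Lemma 3.6]{BD-ajm}. Your chase, with the error terms shown to die after completion at the non-Eisenstein ideal $\m$, is precisely the argument being invoked there, so the plan matches the paper's proof in substance.
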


\begin{proof} There is a commutative diagram
\[ \xymatrix{V(K_\l)\ar[r]\ar[dr]&E^2(K_\l)\times J'(K_\l)\ar[r]\ar[d]&J(K_\l)\ar[d]\\&\Phi\bigl(J'_{/K_\l}\bigr)_{\!\m}\ar[r]&\Phi\bigl(J_{/K_\l}\bigr)_{\!\m}} \]
and, by Lemma \ref{Phi-iso-lemma}, $\Phi(J)_\m$ is isomorphic to $\Phi(J_0^D(M\l))_\m$. But the group of components of $J_0^D(M\l)$ over an unramified extension of $\Q_\l$ is Eisenstein, hence the latter group is trivial. The claim then follows as in the proof of \cite[Lemma 3.6]{BD-ajm}. \end{proof} 

The study of component groups we are about to tackle is analogous to what is done in \cite{BD-ajm} in the case of classical modular curves. 

\begin{proposition} \label{iso-prop}
The map
\[ \boldsymbol i:E^2(K_\l)^\epsilon_\m\longrightarrow J(K_\l)^\epsilon_\m \]
induced by $\varphi_E$ is an isomorphism.
\end{proposition}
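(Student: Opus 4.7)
The plan is to analyze the $\Gal(K_\l/\Q_\l)$-cohomology of the isogeny
\[ 0 \longrightarrow V \longrightarrow E^2 \times J' \xrightarrow{\varphi} J \longrightarrow 0, \]
pass to $\epsilon$-parts (which commutes with $\m$-completion since $p$ is odd), and $\m$-complete to obtain the exact sequence
\[ V(K_\l)^\epsilon_\m \xrightarrow{(a_1,\,a_2)} E^2(K_\l)^\epsilon_\m \oplus J'(K_\l)^\epsilon_\m \xrightarrow{\varphi} J(K_\l)^\epsilon_\m \xrightarrow{\delta} H^1(K_\l, V)^\epsilon_\m, \]
where $a_1, a_2$ are the two coordinate projections of the inclusion $V \hookrightarrow E^2 \times J'$, and $\boldsymbol i$ is the restriction of $\varphi$ to the first summand. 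Since $\varphi_E \circ a_1 + \varphi' \circ a_2 = 0$ on $V$ and $a_1$ is an isomorphism by Lemma \ref{kernel-lemma}(2), setting $s := a_2 \circ a_1^{-1}$ yields the identity $\boldsymbol i = -\varphi' \circ s$.

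Injectivity follows immediately: since $\varphi'$ is injective, $\ker \boldsymbol i = \ker s = a_1\bigl(V_E(K_\l)^\epsilon_\m\bigr)$, where $V_E := \ker(\varphi_E)$, and this vanishes by Lemma \ref{support-lemma}.

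Surjectivity reduces to two claims: (a) $a_2 : V(K_\l)^\epsilon_\m \twoheadrightarrow J'(K_\l)^\epsilon_\m$ is surjective, and (b) the connecting map $\delta$ vanishes, so $\varphi$ is onto on $\epsilon$-parts at $\m$. Granting both, any $z \in J(K_\l)^\epsilon_\m$ lifts by (b) to $(x_0, y_0)$; writing $y_0 = a_2(v)$ by (a), the image of $v \in V$ differs $(x_0 - a_1(v), 0)$ from $(x_0, y_0)$, so $z = \boldsymbol i(x_0 - a_1(v))$. For (a), invoke the Tate--Morikawa--Mumford uniformization of Corollary \ref{mumford-coro} with Galois action $\tau(z) = \bar z^\epsilon$: the $\epsilon$-eigenspace of the uniformizing torus $(K_\l^\times)^d$ splits as a $p$-adically divisible unit part (which contributes nothing at $\m$ since $\l \ne p$) together with a valuation part that surjects onto $\Phi(J'_{/K_\l})_\m$, and by Lemma \ref{surjection-lemma} the latter is covered by the image of $V(K_\l)$. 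For (b), combine the finite-flat extension of $V$ over $\cO_\l$ from Lemma \ref{kernel-lemma}(1) with Lemma \ref{Phi-iso-lemma} (itself relying on the fact that $\Phi(J_0^D(M\l)_{/\Q_\l})$ is Eisenstein and hence dies after completion at the non-Eisenstein $\m$) to control $H^1(K_\l, V)^\epsilon_\m$ via the component-group side of the Kummer-type sequence associated with the uniformization.

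The main obstacle is verifying (a) and (b), the crux of which is matching the $\epsilon$-sign with the split or non-split nature of the toric reduction of $J'$ prescribed by Proposition \ref{mult-red-prop}. The sign-change encoded there is exactly what forces the $\epsilon$-part of the uniformizing torus to align with the valuation direction, so that the component-group analysis assembled in Lemmas \ref{Phi-iso-lemma} and \ref{surjection-lemma} becomes applicable. This is the faithful quaternionic transcription of the argument of \cite{BD-ajm}: every modular-curve input used there has been replaced by a corresponding quaternionic statement (Theorem \ref{1-dimensional-thm} and Proposition \ref{non-eisenstein-prop} in place of Mazur--Ribet multiplicity one and component-group calculations, Proposition \ref{toric-maximal} and Proposition \ref{mult-red-prop} in place of Ogg-type reduction results), so once (a) and (b) are in place the proof should go through verbatim.
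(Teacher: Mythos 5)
Your skeleton coincides with the paper's: both arguments take the long exact cohomology sequence of $0\to V\to E^2\times J'\xrightarrow{\varphi}J\to0$, pass to $\epsilon$-parts and $\m$-completions, and feed in Lemma \ref{support-lemma} (injectivity), Lemma \ref{kernel-lemma}, Lemma \ref{surjection-lemma} and the uniformisation of Corollary \ref{mumford-coro}; your reduction of surjectivity to the two claims (a) and (b) is also a legitimate repackaging of the paper's diagram chase. The problem is that (a) and (b), which you defer as ``the main obstacle'', are precisely the content of the paper's proof, and the reasons you sketch for them do not hold up. For (a), the assertion that the unit part of the uniformising torus ``contributes nothing at $\m$ since $\l\neq p$'' is false as stated: since $p\mid\l+1\mid\l^2-1$, the group $\cO_{K_\l}^\times$ contains $\mu_p$, so it has nontrivial $p$-part. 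What rescues the argument is exactly the sign matching you postpone: under the twisted action $\tau(z)=\bar z^{\,\epsilon}$ of Corollary \ref{mumford-coro}(2), the $\epsilon$-eigenspace of $\cO_{K_\l}^\times$ is $\Z_\l^\times$, an extension of a group of exponent $\l-1$ by a pro-$\l$ group, which dies after completion at $\m$ because $p\nmid\l(\l-1)$. This is how the paper shows that the kernel of $J'(K_\l)^\epsilon_\m\to\Phi\bigl(J'_{/K_\l}\bigr)^\epsilon_\m$ vanishes, which combined with Lemma \ref{surjection-lemma} yields your (a).

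For (b), controlling $H^1(K_\l,V)^\epsilon_\m$ ``via Lemma \ref{Phi-iso-lemma} and the component-group side of the Kummer-type sequence'' is not the mechanism that works (Lemma \ref{Phi-iso-lemma} enters only through the proof of Lemma \ref{surjection-lemma}). The paper's argument is cohomological and rests on an eigenvalue computation: since $V$ is unramified at $\l$ and $E$ has good reduction there, the kernel of $H^1(K_\l,V)\to H^1(K_\l,E^2)$ is the unramified part $H^1_f(K_\l,V)$; setting $V^0:=\ker\bigl(V\to\Phi\bigl(J'_{/K_\l}\bigr)\bigr)$, the kernel of $H^1_f(K_\l,V)\to H^1(K_\l,J')$ is contained in the image of $H^1_f(K_\l,V^0)$; and the decisive point is that complex conjugation acts as $-\epsilon$ on $(V^0)_\m$ while acting trivially on $\Gal(K_\l^{\mathrm{nr}}/K_\l)$, so that $H^1_f(K_\l,V^0)^\epsilon_\m=0$ and the cokernel of $\varphi_\m$ vanishes. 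Until you supply these two sign computations (both resting on Proposition \ref{mult-red-prop} and Corollary \ref{mumford-coro}(2)), the surjectivity half of the proposition is not proved; the injectivity half and the formal reduction to (a) and (b) are correct and agree with the paper.
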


\begin{proof} By Lemma \ref{support-lemma}, we already know that $\boldsymbol i$ is injective. In order to prove surjectivity, consider the commutative diagram
\begin{equation} \label{diagram-eq}
\xymatrix{&0\ar[d]&0\ar[d]\\&E^2(K_\l)^\epsilon_\m\ar[d]\ar@{=}[r]&E^2(K_\l)^\epsilon_\m\ar[d]^-{\boldsymbol i}\\
V(K_\l)^\epsilon_\m\ar[r]\ar[dr]&\bigl(E^2(K_\l)\times J'(K_\l)\bigr)^{\!\epsilon}_{\!\m}\ar[d]\ar[r]^-{\varphi_\m}&J(K_\l)^\epsilon_\m\\
&\Phi\bigl(J'_{/K_\l}\bigr)^{\!\epsilon}_{\!\m}.&}
\end{equation}
First of all, observe that the leftmost vertical sequence is exact. Indeed, by Corollary \ref{mumford-coro}, the kernel of the map $J'(K_\l)^\epsilon\rightarrow\Phi(J'_{/K_\l})^\epsilon$ is isomorphic to an extension of a group of exponent $\l-1$ by a pro-$\l$ group; but $p\nmid\l(\l-1)$ because $p\not=\l$, $p\,|\,\l+1$ and $p\not=2$ (\S \ref{p-subsec} and \S \ref{l-subsec}), hence the support of this kernel is disjoint from $\m$.

Furthermore, the map $\varphi_\m$ is surjective. In fact, taking $K_\l$-rational points in the short exact sequence
\[ 0\longrightarrow V\longrightarrow E^2\times J'\overset\varphi\longrightarrow J\longrightarrow0 \]
gives a long exact sequence in cohomology
\[ E^2(K_\l)\times J'(K_\l)\longrightarrow J(K_\l)\longrightarrow H^1(K_\l,V)\longrightarrow H^1(K_\l,E^2)\times H^1(K_\l,J'). \]  
Since $\l\nmid pN$, the Galois representation $V$ (over $K_\l$) is unramified. Since the prime $\l$ is of good reduction for $E$, Lemma \ref{kernel-lemma} implies that the kernel of $H^1(K_\l,V)\rightarrow H^1(K_\l,E^2)$ is the finite part $H^1_f(K_\l,V)$ of the cohomology (recall that $H^1_f(K_\l,V)$ is, by definition, the kernel of the restriction map $H^1(K_\l,V)\rightarrow H^1(I_\l,V)$ where $I_\l$ is the inertia group at $\l$). Now let $V^0$ be the kernel of the natural map $V\rightarrow\Phi\bigl(J'_{/K_\l}\bigr)$. The kernel of $H^1_f(K_\l,V)\rightarrow H^1(K_\l,J')$ is contained in the image of $H^1_f(K_\l,V^0)$, hence the cokernel of $\varphi_\m$ is a quotient of $H_f^1(K_\l,V^0)^\epsilon_\m$. But complex conjugation acts as $-\epsilon$ on $(V^0)_\m$ and trivially on $\Gal(K_\l^{\text{nr}}/K_\l)$, so $H_f^1(K_\l,V^0)^\epsilon_\m=0$.

Finally, the diagonal map in diagram \eqref{diagram-eq} is surjective by Lemma \ref{surjection-lemma}, and the surjectivity of $\boldsymbol i$ can be checked via a diagram chasing. \end{proof}

Resume the notation of \S \ref{ring-class-subsec}, so that $\mathcal L$ is the ring class field of $K$ of conductor $\l$. Following \cite[p. 274]{BD-ajm}, let us introduce the group of local points
\[ \tilde J:=\frac{J(\mathcal L_\l)}{\varphi_E(E^2(\mathcal L_\l))+(\sigma-1)J(\mathcal L_\l)}. \]
The module $\tilde J$ is endowed with a Hecke action and with an action of complex conjugation. Since $\varphi_E(E^2)$ is contained in the identity component of $J$ and $\Gal(\mathcal L_\l/K_\l)$ acts trivially on $\Phi(J_{/\mathcal L_\l})$ (because it acts trivially on $\Phi(J_0^D(M\l)_{/\mathcal L_\l})$), projection onto the group of connected components gives a map
\[ \boldsymbol p:\tilde J_\m^\epsilon\longrightarrow\Phi(J_{/\mathcal L_\l})_\m. \] 

\begin{proposition} \label{p-iso-prop}
The map $\boldsymbol p$ is an isomorphism.
\end{proposition}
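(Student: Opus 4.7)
The plan is to derive the isomorphism from the N\'eron-model short exact sequence
\[ 0\longrightarrow J^0(\mathcal L_\l)\longrightarrow J(\mathcal L_\l)\longrightarrow\Phi(J_{/\mathcal L_\l})\longrightarrow 0 \]
by identifying the kernel and cokernel of $\boldsymbol p$ after $\m$-completion and passage to the $\epsilon$-eigenspace. As already observed right before the statement, $\boldsymbol p$ is well defined because $\varphi_E(E^2)\subseteq J^0$ (since $E$ has good reduction at $\l$) and $(\sigma-1)J(\mathcal L_\l)\subseteq J^0(\mathcal L_\l)$ (since $\mathcal L_\l/K_\l$ is totally ramified, so $\sigma$ acts trivially on $\Phi(J_{/\mathcal L_\l})$).

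For surjectivity I would use that the projection $J(\mathcal L_\l)\twoheadrightarrow\Phi(J_{/\mathcal L_\l})$ is surjective by the defining property of the N\'eron model, which descends to a surjection $\tilde J\twoheadrightarrow\Phi(J_{/\mathcal L_\l})$; localising at $\m$ is exact. To see that the restriction to the $\epsilon$-eigenspace is still surjective, I would check that complex conjugation acts as $\epsilon$ on all of $\Phi(J_{/\mathcal L_\l})_\m$. By Lemma \ref{Phi-iso-lemma} and Proposition \ref{non-eisenstein-prop}, $\Phi(J_{/\mathcal L_\l})_\m$ is identified with $X^*(T)\otimes(\Z/e\Z)$ localised at $\m$, and the $\Gal(K_\l/\Q_\l)$-equivariance of the uniformization (Corollary \ref{mumford-coro}(2)) produces the required eigenvalue because complex conjugation acts on the toric character lattice via the twist $\tau(z)=\bar z^\epsilon$.

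The heart of the argument is injectivity. Let $[x]\in\tilde J_\m^\epsilon$ with $\boldsymbol p([x])=0$; lift $x$ to $J^0(\mathcal L_\l)$. Using the isogeny $\varphi:E^2\times J'\to J$ of \eqref{phi-eq} together with Proposition \ref{iso-prop} (which identifies $E^2(K_\l)_\m^\epsilon$ with $J(K_\l)_\m^\epsilon$), after modifying $x$ by an element of $\varphi_E(E^2(\mathcal L_\l))$ I may assume $x=\varphi'(y)$ with $y\in J'^0(\mathcal L_\l)$. The task reduces to showing
\[ J'^0(\mathcal L_\l)_\m\;\subseteq\;(\sigma-1)J'(\mathcal L_\l)_\m. \]
By Corollary \ref{mumford-coro}, the uniformization $0\to\Lambda\to(\mathcal L_\l^\times)^d\to J'(\mathcal L_\l)\to 0$ identifies $J'^0(\mathcal L_\l)$ with $(\mathcal O_{\mathcal L_\l}^\times)^d$, since $\Lambda$ contains no units. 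Writing $\mathcal O_{\mathcal L_\l}^\times=\mu\times U^{(1)}$ with $\mu$ the prime-to-$\l$ roots of unity and $U^{(1)}$ the pro-$\l$ principal units, the factor $U^{(1)}$ is annihilated by $\m$-completion because $p\neq\l$, so the problem reduces to $\mu_\m^d$. Since $\mathcal L_\l/K_\l$ is tamely ramified cyclic of degree $e=(\l+1)/u$ with $p\mid e$ (thanks to $p\mid\l+1$ and $p\nmid u$), one can write $\mathcal L_\l=K_\l(\pi)$ with $\pi^e\in K_\l^\times$; then $\sigma(\pi)/\pi=\zeta_e$ is a primitive $e$-th root of unity. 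For an appropriate exponent $n$ the element $(\sigma-1)\pi^n$ generates the $p$-part $\mu_{p^k}=\mu_\m$ of $\mu$ (where $p^k$ is the $p$-part of $\l+1$), and applying this coordinate-by-coordinate in $(\mathcal L_\l^\times)^d$ shows that every element of $\mu_\m^d$ lies in $(\sigma-1)J'(\mathcal L_\l)_\m$. A final diagram chase using Lemma \ref{surjection-lemma} completes the argument.

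The main obstacle is the explicit local-units calculation in the third paragraph, where one has to juggle the $\tau(z)=\bar z^\epsilon$ twist, the tame ramification of $\mathcal L_\l/K_\l$, and the constraints $p\mid\l+1$, $p\neq 2,\l$; once the bulk of $\mathcal O_{\mathcal L_\l}^\times$ (the pro-$\l$ part) is removed by $\m$-completion, the residual computation on the Teichm\"uller factor is what drives the equality. The overall architecture is parallel to the modular-curve argument of \cite{BD-ajm}.
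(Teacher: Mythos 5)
Your overall architecture is the one the paper intends: its own proof is essentially a pointer ("surjectivity is direct from the definition; for injectivity apply Corollary \ref{mumford-coro} and argue as in \cite[Proposition 3.8]{BD-ajm}"), and what you supply — the N\'eron-model exact sequence, the identification of the $\m$-completed identity-component contribution with the prime-to-$\l$ units via the uniformisation of Corollary \ref{mumford-coro}, the observation that $p\mid\l+1$, $p\neq 2,\l$, $p\nmid u$ makes the relevant torsion a $\mu_{p^k}$ inside $\mu_e$, and the tame computation $\sigma(\pi)/\pi=\zeta_e$ showing this torsion lies in $(\sigma-1)J'(\mathcal L_\l)$ — is a correct filling-in of exactly that argument. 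Your remark that surjectivity onto the \emph{full} group $\Phi(J_{/\mathcal L_\l})_\m$ requires complex conjugation to act as $\epsilon$ on it (which follows from the $\epsilon$-twisted action on the uniformisation, since conjugation preserves valuations) is also right and makes explicit something the paper leaves implicit.

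The one step that is genuinely under-justified as written is the reduction ``after modifying $x$ by an element of $\varphi_E(E^2(\mathcal L_\l))$ I may assume $x=\varphi'(y)$ with $y\in J'^0(\mathcal L_\l)$,'' which you support by citing Proposition \ref{iso-prop} and, at the end, Lemma \ref{surjection-lemma}. Both of those are statements about $K_\l$-points, while your $x$ lives in $J^0(\mathcal L_\l)$; what you actually need is (i) control at $\m$ of the cokernel of $E^2(\mathcal L_\l)\times J'(\mathcal L_\l)\to J(\mathcal L_\l)$, i.e.\ of the obstruction in $H^1(\mathcal L_\l,V)$, and (ii) control of the kernel of $\Phi\bigl(J'_{/\mathcal L_\l}\bigr)_\m\to\Phi\bigl(J_{/\mathcal L_\l}\bigr)_\m$ so that $\varphi'(y)\in J^0$ can be upgraded to $y\in J'^0$ modulo harmless terms. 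Neither follows formally from the $K_\l$ case: over $\mathcal L_\l$ the unramified cohomology of $V^0$ need not vanish in the same way (Frobenius of the residue field of $\mathcal L_\l$ is the \emph{square} of conjugation, which acts trivially on $V^0_\m$), and eigenspaces for complex conjugation are only well defined after first killing $(\sigma-1)$, so the chase has to be organised at the level of $\tilde J$ (or routed through the norm to $K_\l$, as in the proof of Proposition \ref{n-iso-prop}). This is precisely the kind of diagram chase carried out in \cite[Proposition 3.8]{BD-ajm}, so the gap is reparable by the same methods, but as it stands the appeal to Proposition \ref{iso-prop} and Lemma \ref{surjection-lemma} does not cover it.
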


\begin{proof} The surjectivity of $\boldsymbol p$ is a direct consequence of its definition. To show injectivity, one can apply Lemma \ref{mumford-coro} and argue exactly as in the proof of \cite[Proposition 3.8]{BD-ajm}. \end{proof}

\begin{corollary} \label{p-coro}
The group $(\tilde J/\m\tilde J)^\epsilon$ is a one-dimensional vector space over $\T/\m$.
\end{corollary}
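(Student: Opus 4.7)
The plan is to chain together the identifications developed in \S\S \ref{components-subsec}--\ref{ring-class-subsec} in order to reduce the corollary to a computation with the character group $X^*(T)$. The first reduction I would make uses that $p\neq 2$ is in $\m$: complex conjugation therefore decomposes any $\m$-local $\T$-module into its $\pm 1$-eigencomponents in a way compatible with quotient by $\m$, and combined with the standard equality $\tilde J/\m\tilde J = \tilde J_\m/\m\tilde J_\m$ (valid because $\m$ is maximal in $\T$) one gets
\[ (\tilde J/\m\tilde J)^\epsilon \;\cong\; \tilde J_\m^\epsilon/\m\tilde J_\m^\epsilon. \]

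Next, I would successively apply Proposition \ref{p-iso-prop}, Lemma \ref{Phi-iso-lemma}, and Proposition \ref{non-eisenstein-prop} to produce the chain
\[ \tilde J_\m^\epsilon \;\cong\; \Phi(J_{/\mathcal L_\l})_\m \;\cong\; \Phi\bigl(J_0^D(M\l)_{/\mathcal L_\l}\bigr)_\m \;\cong\; \bigl(X^*(T)\otimes\Z/e\Z\bigr)_\m, \]
where $e$ denotes the ramification index of $\mathcal L_\l/\Q_\l$. Since $\l$ is inert in $K$, the extension $K_\l/\Q_\l$ is unramified, whereas $\mathcal L_\l/K_\l$ is totally ramified of degree $(\l+1)/u$ (\S \ref{ring-class-subsec}), so $e=(\l+1)/u$. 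Lemma \ref{controllable-lem} ensures that $\m$ is controllable at $\l$, hence the remark after Theorem \ref{1-dimensional-thm} yields an identification $X^*(T)_\m\cong\T_\m$ of $\T_\m$-modules, and the display above becomes $\T_\m/e\T_\m$. Taking the quotient by $\m$ then reduces the corollary to proving that $\T_\m/(\m,e)\T_\m\cong\T/\m$.

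The remaining step is purely numerical. I would note that $p\in\m$ and $p\mid\l+1$ (since $\l$ is a Kolyvagin prime), while condition (2) of \S \ref{p-subsec} forces $p\nmid 6$ and therefore $p\nmid u\in\{1,2,3\}$. Hence $p\mid e$, so $e\in\m\T_\m$ and $(\m,e)\T_\m=\m\T_\m$, which gives the one-dimensional $\T/\m$-vector space asserted. No step here is a serious obstacle, as each building block has been put in place in the preceding subsections; the only point requiring mild care is the commutation of $\m$-localisation with the $\pm$-eigenspace decomposition, which ultimately rests on $p$ being odd.
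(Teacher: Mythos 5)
Your proof is correct and follows essentially the same route as the paper, which deduces the corollary from Proposition \ref{p-iso-prop} together with Theorem \ref{1-dimensional-thm} and Proposition \ref{non-eisenstein-prop} (via Lemma \ref{Phi-iso-lemma}). Your write-up merely makes explicit the steps the paper leaves implicit, notably the identification of the ramification index $e=(\l+1)/u$ of $\mathcal L_\l$ and the verification that $p\mid e$, both of which are checked correctly.
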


\begin{proof} The quotient $\Phi(J_{/\mathcal L_\l})/\m\Phi(J_{/\mathcal L_\l})$ is one-dimensional over $\T/\m$ by a combination of Theorem \ref{1-dimensional-thm} and Proposition \ref{non-eisenstein-prop}, and then the corollary follows from Proposition \ref{p-iso-prop}. \end{proof} 

Let $N_{\mathcal L_\l/K_\l}:=\sum_{i=1}^{(\l+1)/u}\sigma^i$ be the usual norm map of $\mathcal L_\l$ over $K_\l$. Since $(\l+1)/u$ is prime to $\l$ and the residual characteristic $p$ of $\m$ divides $\l+1$ (cf. \S \ref{l-subsec}), it follows that $N_{\mathcal L_\l/K_\l}$ induces a well-defined map
\[ \boldsymbol n:\tilde J/\m\tilde J\longrightarrow J(K_\l)/\m J(K_\l) \]
which is equivariant for the action of complex conjugation (see \cite[p. 275]{BD-ajm} for details). By an abuse of notation, we shall adopt the same symbol to denote the restriction of $\boldsymbol n$ to the $\epsilon$-eigenspaces. 

\begin{proposition} \label{n-iso-prop}
The map
\[ \boldsymbol n:(\tilde J/\m\tilde J)^\epsilon\longrightarrow\bigl(J(K_\l)/\m J(K_\l)\bigr)^{\!\epsilon} \]
is an isomorphism.
\end{proposition}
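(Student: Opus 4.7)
The plan is to observe that $\boldsymbol n$ is a homomorphism between two $1$-dimensional $\T/\m$-vector spaces, hence an isomorphism as soon as it is non-zero; two of the three ingredients are already within reach, and the heart of the argument is the non-vanishing. That the source is $1$-dimensional over $\T/\m$ is Corollary \ref{p-coro}. For the target I would transport the computation through the isomorphism $\boldsymbol i:E^2(K_\l)^\epsilon_\m\cong J(K_\l)^\epsilon_\m$ of Proposition \ref{iso-prop}, reducing to $(E^2(K_\l)/\m E^2(K_\l))^\epsilon$. The action of $\T$ on $E^2$ factors through $\Z[\beta]=\T/I_{f_\l^\JL}$, and the factorization $X^2-a_\l(f)X+\l\equiv(X-1)(X+1)\pmod{p}$ recorded in \S \ref{f-l-subsec}, together with $p\neq 2$ so that the discriminant is a $p$-adic unit, yields a splitting $\Z[\beta]\otimes\Z_p\cong\Z_p\times\Z_p$; the idempotent attached to $\m_{f_\l^\JL}=(p,\beta-\epsilon)$ singles out one factor and identifies $E^2(K_\l)^\epsilon_\m/\m$ with $(E(K_\l)/pE(K_\l))^\epsilon$. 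Because $\l$ is a Kolyvagin prime, $E[p]\subseteq E(K_\l)$ and complex conjugation, which agrees with $\Frob_\l$ on $E(K_\l)$, acts on $E[p]$ with eigenvalues $\pm 1$, each one-dimensional; combined with the pro-$\l$ nature of the formal group of $E/K_\l$ and $p\neq\l$, this gives $E(K_\l)\otimes\F_p\cong E[p]$, so the target is $1$-dimensional over $\T/\m=\F_p$.

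For the non-vanishing I would combine the isomorphism $\boldsymbol p:\tilde J_\m^\epsilon\cong\Phi(J_{/\mathcal L_\l})_\m$ of Proposition \ref{p-iso-prop} with the Tate--Mumford uniformization of Corollary \ref{mumford-coro}. A non-zero class in $\Phi(J_{/\mathcal L_\l})/\m$ can be lifted to $J'(\mathcal L_\l)$ by choosing an element of $(\mathcal L_\l^\times)^d$ whose valuation vector represents, under Proposition \ref{non-eisenstein-prop}, a generator of the one-dimensional quotient of $X^*(T)\otimes(\Z/e\Z)$ modulo $\m$, where $e=(\l+1)/u$ is the ramification index of $\mathcal L_\l/\Q_\l$. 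Computing the norm on the Mumford side reduces the problem to an explicit assertion about the interaction of a uniformizer of $\mathcal L_\l$ with the monodromy pairing \eqref{scalar-product-eq}, and the image lies automatically in the connected component of $J_{/K_\l}$ because $\Phi(J_{/K_\l})_\m=0$ (the extension $K_\l/\Q_\l$ is unramified, since $\l$ is inert in $K$, so Proposition \ref{non-eisenstein-prop} forces this vanishing). Once inside $J^0(K_\l)$, Proposition \ref{iso-prop} identifies the result with a non-trivial class of $E^2(K_\l)^\epsilon_\m/\m$, which is what is required. This is the quaternionic analogue of \cite[Proposition 3.10]{BD-ajm}.

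The main obstacle is this last step: one must track a class through $\boldsymbol p$, the Mumford uniformization, the norm, and back into the good-reduction side via $\boldsymbol i$, all while keeping the twisted $\Gal(\mathcal L_\l/\Q_\l)$-action of Corollary \ref{mumford-coro}(ii) consistent with the $\epsilon$-sign that governs both the choice of factor of $\Z_p[\beta]$ and the complex-conjugation eigenspace on the uniformization side. All the earlier preparation — the multiplicity-one statement Theorem \ref{1-dimensional-thm}, the component group analysis of \S \ref{components-subsec}, and the strong Weil choice made in \S \ref{JL-subsec} — is exactly what is needed to make this bookkeeping go through.
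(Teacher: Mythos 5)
Your reduction of the statement to ``both sides are one-dimensional over $\T/\m$, so it suffices to show $\boldsymbol n\neq 0$'' is fine, and the two dimension counts you give do match what the paper uses: the source is handled by Corollary \ref{p-coro}, and the target by transporting through $\boldsymbol i$ (Proposition \ref{iso-prop}) and the analysis of $\bigl(E^2(K_\l)/\m E^2(K_\l)\bigr)^{\epsilon}$ via the splitting of $\Z[\beta]\otimes\Z_p$ and the fact that $\Frob_\l$ acts on $E[p]$ with eigenvalues $\pm1$ (this is exactly the content of the reference to \cite[Lemma 3.4]{BD-ajm} in the paper). The problem is the non-vanishing, which you yourself flag as ``the main obstacle'': you never prove it. Your plan --- lift a generator of $\Phi(J_{/\mathcal L_\l})/\m$ to $(\mathcal L_\l^\times)^d$ via the Mumford uniformisation, take the norm, and claim the result is nontrivial in $\bigl(J(K_\l)/\m J(K_\l)\bigr)^{\epsilon}$ --- reduces the crucial point to an ``explicit assertion'' that is left unverified, so the proof is incomplete exactly where it matters.

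Moreover, the proposed route is unlikely to close as described. Over $K_\l$ (an unramified extension of $\Q_\l$) one has $\Phi(J_{/K_\l})_\m=0$, so the valuation vector of $N_{\mathcal L_\l/K_\l}(q)$ --- which is all the monodromy pairing of \eqref{scalar-product-eq} sees --- carries no obstruction at $\m$; non-triviality of the norm in $J(K_\l)/\m J(K_\l)$ is a statement about the ``unit part'' of the uniformisation and its interaction with $\varphi_E(E^2)$, which your computation does not engage. The paper avoids this entirely: it proves surjectivity of $\boldsymbol n$ by a diagram chase in the norm-compatible analogue of diagram \eqref{diagram-eq}, using three inputs: (i) exactness of the column $V(K_\l)\to E^2(K_\l)\times J'(K_\l)\to E^2(K_\l)$ after applying $(\,\cdot\,/\m)^{\epsilon}$, where local class field theory together with Corollary \ref{mumford-coro} shows the norm from $J'(\mathcal L_\l)$ hits $J'(K_\l)^{\epsilon}$; (ii) surjectivity of $\varphi_\m$, already established in the proof of Proposition \ref{iso-prop}; and (iii) surjectivity of the diagonal map $V(K_\l)^\epsilon_\m\to E^2(K_\l)^\epsilon_\m$, which is Lemma \ref{kernel-lemma}(2). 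Injectivity then follows from the two dimension counts, as you say. To repair your argument you would either have to carry out the norm computation on the Tate--Mumford side in full (including the twisted Galois action of Corollary \ref{mumford-coro}(2) and a comparison with $\varphi_E(E^2(K_\l))$ modulo $\m$), or simply replace the non-vanishing step by the diagram chase above.
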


\begin{proof} To begin with, we prove the surjectivity of $\boldsymbol n$. To do this, consider the commutative diagram
\[ \xymatrix{&\bigl(J'(\mathcal L_\l)/\m J'(\mathcal L_\l)\bigr)^{\!\epsilon}\ar[r]\ar[d]&(\tilde J/\m\tilde J)^\epsilon\ar[d]^-{\boldsymbol n}\\
                     \bigl(V(K_\l)/\m V(K_\l)\bigr)^{\!\epsilon}\ar[r]\ar[dr]&\bigl((E^2(K_\l)\times J'(K_\l))/\m\bigr)^{\!\epsilon}\ar[d]\ar[r]^-{\varphi_\m}&\bigl(J(K_\l)/\m J(K_\l)\bigr)^{\!\epsilon}\\&\bigl(E^2(K_\l)/\m E^2(K_\l)\bigr)^{\!\epsilon},&} \]
where the topmost, left vertical arrow is induced by the norm map and the bottom vertical arrow is the natural projection onto the first component. As in the proof of \cite[Proposition 3.10]{BD-ajm}, we point out the following facts.
\begin{enumerate}
\item The leftmost vertical sequence is exact. Clearly, we need only check that the kernel of the second map is contained in the image of the first, which is true (since the extension $\mathcal L_\l/K_\l$ is abelian) by local class field theory. Indeed, Corollary \ref{mumford-coro} ensures that $J'(\mathcal L_\l)$ is isomorphic to a quotient of $(\mathcal L^\times_\l)^d$ by a discrete subgroup, and so the image of the norm map contains $J'(K_\l)^\epsilon$. 
\item The surjectivity of $\varphi_\m$ is a consequence of the surjectivity of the map denoted by the same symbol in the proof of Proposition \ref{iso-prop}.
\item The diagonal map is surjective by part (2) of Lemma \ref{kernel-lemma}.   
\end{enumerate}
Now the surjectivity of $\boldsymbol n$ follows by combining these three remarks with a diagram chasing.

Finally, to prove that $\boldsymbol n$ is injective observe that, by Corollary \ref{p-coro}, $(\tilde J/\m\tilde J)^\epsilon$ is a one-dimensional $\F_p$-vector space and that the same is true of $\bigl(J(K_\l)/\m J(K_\l)\bigr)^{\!\epsilon}$ by Proposition \ref{iso-prop} (cf. the proof of \cite[Lemma 3.4]{BD-ajm} for details). \end{proof}

By a slight abuse of notation, write
\[ \boldsymbol i:\bigl(E^2(K_\l)/\m E^2(K_\l)\bigr)^{\!\epsilon}\overset\cong\longrightarrow\bigl(J(K_\l)/\m J(K_\l)\bigr)^{\!\epsilon},\qquad\boldsymbol p:(\tilde J/\m\tilde J)^\epsilon\overset\cong\longrightarrow\Phi(J_{/\mathcal L_\l})/\m \]
for the isomorphisms of one-dimensional $\F_p$-vector spaces induced by the maps in Propositions \ref{iso-prop} and \ref{p-iso-prop}, respectively. Lemma \ref{Phi-iso-lemma} implies that there is an isomorphism
\[ \boldsymbol j:\Phi\bigl(J_0^D(M\l)_{/M_\l}\bigr)\big/\m\overset\cong\longrightarrow\Phi(J_{/\mathcal L_\l})/\m. \]
Since the maximal ideal $\m$ is not Eisenstein and $X^*(T)=\mathscr M^0$ canonically, Proposition \ref{non-eisenstein-prop} gives an identification $\Phi\bigl(J_0^D(M\l)_{/\mathcal L_\l}\bigr)\big/\m=\mathscr M^0/\m\mathscr M^0$. Then, as in \cite[p. 276]{BD-ajm}, define the isomorphism of one-dimensional $\F_p$-vector spaces
\begin{equation} \label{eta-eq}
\eta:=\boldsymbol j^{-1}\circ\boldsymbol p\circ\boldsymbol n^{-1}\circ\boldsymbol i:\bigl(E^2(K_\l)/\m E^2(K_\l)\bigr)^{\!\epsilon}\overset\cong\longrightarrow\mathscr M^0/\m\mathscr M^0. 
\end{equation}
It will play an important role in our subsequent arguments.

\section{Special values and Jochnowitz congruences}

\subsection{The module $\mathscr M^0_g$}

Recall from \S \ref{enhanced-subsec} that $\mathscr M=\Z[\E]$, the free abelian group over the (finite) set $\E$ of conjugacy classes of oriented Eichler orders of level $M$ in $\mathcal B$. Then the character group $X^*(T)$ can be identified with the subgroup $\mathscr M^0$ of degree zero divisors in $\mathscr M$. Of course, there is a short exact sequence
\begin{equation} \label{M-exact-eq}
0\longrightarrow\mathscr M^0\longrightarrow\mathscr M\xrightarrow\deg\Z\longrightarrow0
\end{equation}
induced by the usual degree map. The module $\mathscr M$ is endowed with a natural Hecke action compatible with the inclusion $\mathscr M^0\subset\mathscr M$. 

Notation being as in \S \ref{J-subsec}, if $g$ is a form on $J'$ then let $\cO_{g,\m}$ be the completion of $\cO_g$ at $\m_g$. Moreover, set
\[ \mathscr M^0_g:=(\mathscr M^0/I_g)\otimes_\T\cO_{g,\m},\qquad\mathscr M_g:=(\mathscr M/I_g)\otimes_\T\cO_{g,\m}. \]

\begin{proposition} \label{M-iso-prop}
The natural map $\mathscr M^0_g\rightarrow\mathscr M_g$ is an isomorphism and $\mathscr M^0_g$ is free of rank one over $\cO_{g,\m}$.
\end{proposition}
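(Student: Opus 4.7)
The plan is to combine the short exact sequence \eqref{M-exact-eq} with the vanishing of its Eisenstein quotient upon localisation at $\m$, and then to feed the resulting identification of $\mathscr M^0_\m$ with $\mathscr M_\m$ into Helm's multiplicity one theorem in order to extract a rank-one free $\cO_{g,\m}$-module.

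First I would observe that the degree map in \eqref{M-exact-eq} is $\T$-equivariant once $\Z$ is given the Eisenstein Hecke structure, in which $T_q$ acts as $q+1$ for every prime $q\nmid ND\l$. To show that $\Z_\m=0$ it is enough to exhibit a single element of $\m$ that acts invertibly on $\Z$: since $\bar\rho_\m\cong\bar\rho_{E,p}$ is absolutely irreducible by condition (1) of \S \ref{p-subsec}, it is not congruent modulo $p$ to $\mathbf{1}\oplus\chi_{\cyc}$, so \v{C}ebotarev produces a prime $q\nmid N\l$ with $a_q(f)\not\equiv q+1\pmod{p}$; then $T_q-a_q(f)\in\m$ acts on $\Z$ as the unit $q+1-a_q(f)\in\Z_p^{\times}$. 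Localisation being exact, this yields a canonical isomorphism $\mathscr M^0_\m\xrightarrow{\;\sim\;}\mathscr M_\m$ of $\T_\m$-modules. Since $\cO_{g,\m}$ is naturally a $\T_\m$-algebra via $\phi_g$, and since $I_g$ annihilates $\cO_{g,\m}$, tensoring this isomorphism over $\T_\m$ with $\cO_{g,\m}$ identifies it with the natural map $\mathscr M^0_g\to\mathscr M_g$, which gives the first claim.

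For the freeness statement, I would invoke Lemma \ref{controllable-lem}, which shows that $\m$ is controllable at $\l$, and then apply Theorem \ref{1-dimensional-thm} together with the Nakayama argument recorded in the remark immediately after it. This gives that $\mathscr M^0_\m=X^*(T)_\m$ is free of rank one over $\T_\m$. Tensoring this freeness with $\cO_{g,\m}$ over $\T_\m$ then produces $\mathscr M^0_g\cong\cO_{g,\m}$, which is free of rank one over $\cO_{g,\m}$ as required.

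The only step that calls for any genuine verification is the Hecke-equivariance of the degree map, i.e.\ the check that the Brandt-matrix action of $T_q$ and of the $U$-operators at primes dividing the level on the trivial divisor $\sum_i[R_i]$ reproduces the Eisenstein character. This is a standard feature of Hecke operators on $\Z[\E]$ in the definite quaternion setting, but it deserves to be flagged in order to legitimately invoke non-Eisenstein vanishing for $\Z_\m$. Once this is in place, the remainder of the argument is entirely formal.
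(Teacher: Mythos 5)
Your proof is correct and takes essentially the same route as the paper: the first claim via the exact sequence \eqref{M-exact-eq} and the Eisenstein nature of the Hecke action on $\mathscr M/\mathscr M^0\cong\Z$ (so that the quotient dies after completion at the non-Eisenstein ideal $\m$), and the second via Lemma \ref{controllable-lem}, Theorem \ref{1-dimensional-thm} and Nakayama. The paper merely delegates these details to \cite[Proposition 4.1]{BD-ajm} and to Vatsal's papers, whereas you spell them out (including the \v{C}ebotarev argument for the vanishing of $\Z_\m$ and the Hecke-equivariance of the degree map), so no changes are needed.
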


\begin{proof} As in \cite[Proposition 4.1]{BD-ajm}, the first part follows by tensoring exact sequence \eqref{M-exact-eq} with $\cO_{g,\m}$ and using the fact that the Hecke action on $\mathscr M/\mathscr M^0=\Z$ is Eisenstein, while the second part is a consequence of Lemma \ref{controllable-lem} plus Theorem \ref{1-dimensional-thm}. Details on this circle of ideas can be found in \cite[\S 2.4]{Vat1} and \cite[\S 2.1]{Vat2}. \end{proof}

\subsection{Algebraic parts of special values} \label{algebraic-part-subsec}

Recall that an embedding $\psi:K\hookrightarrow\mathcal B$ is an optimal embedding of $\cO_K$ into an Eichler order $R$ of $\mathcal B$ if $\psi^{-1}(R)=\cO_K$. Fix an orientation (in the sense of \cite[\S 2.2]{BD96}) of $\cO_K$ and let $h:=\#\Pic(\cO_K)$ be the class number of $K$. There are exactly $h$ distinct conjugacy classes $[\psi_1],\dots,[\psi_h]$ of oriented optimal embeddings of $\cO_K$ into some oriented Eichler order of $\mathcal B$, which correspond to Gross--Heegner points of conductor $1$ (see \cite[\S 2.6]{Vat1}); in fact, there is a simply transitive action of $\Pic(\cO_K)$ on the $[\psi_j]$ (cf. \cite[\S 1]{BD97} and the references therein; in particular, see \cite[\S 2.3]{BD96}, \cite[\S 3]{Gr} and \cite[\S 1.6]{Ro}). Each such conjugacy class $[\psi_j:\cO_K\hookrightarrow R_{\psi_j}]$ gives rise to the element $[R_{\psi_j}]\in\mathscr M$, and we define
\[ \psi_K:=[R_{\psi_1}]+\dots+[R_{\psi_h}]\in\mathscr M. \]
Note that the $h$ classes $[R_{\psi_j}]\in\E$ need not be distinct.

As in \cite[Definition 4.2]{BD-ajm}, we give the following

\begin{definition} \label{alg-part-dfn}
The \emph{algebraic part} $\LL_K(g,1)$ of $L_K(g,1)$ is the image of $\psi_K$ in the rank one $\cO_{g,\m}$-module $\mathscr M^0_g=\mathscr M_g$.
\end{definition}

This definition is justified by the next result, whose proof proceeds along lines similar to those of \cite[Theorem 4.3]{BD-ajm}.

\begin{theorem} \label{alg-part-thm}
$L_K(g,1)=0$ if and only if $\LL_K(g,1)=0$.
\end{theorem}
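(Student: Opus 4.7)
The plan is to combine the Gross--Daghigh--Zhang central value formula (cited at the start of \S\ref{algebraic-part-subsec} and available in \cite{Gr}, \cite{Daghigh}, \cite{Zh2}) with the multiplicity-one input of Proposition \ref{M-iso-prop}, mirroring the strategy of \cite[Theorem 4.3]{BD-ajm}.

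First I would transfer $g$ through a second Jacquet--Langlands correspondence, from the indefinite algebra $B$ of discriminant $D$ and level $M\l$ to the definite algebra $\mathcal B$ of discriminant $D\l$ and level $M$. Since $g$ is a form on $J'$ it is new at $\l$, and by construction it is new at the primes dividing $D$, so this transfer exists and realises the Hecke system attached to $g$ as an eigenvector $\phi_g\in\mathscr M\otimes\C$, unique up to a nonzero scalar. Under the pairing \eqref{scalar-product-eq}, the $g$-isotypic component of $\mathscr M\otimes\C$ is exactly the span of the $G_\Q$-conjugates of $\phi_g$, and is one-dimensional over $\mathrm{Frac}(\cO_g)$ by multiplicity one.

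Next I would apply the Gross--Daghigh--Zhang formula in the shape
\[ L_K(g,1) \;=\; c_g\cdot\frac{\bigl|\langle\phi_g,\psi_K\rangle\bigr|^2}{\langle\phi_g,\phi_g\rangle}, \]
with $c_g$ an explicit nonzero constant depending only on $K$ and on the normalisation of $g$. The hypotheses are met: the prime $\l$ is inert in $K$, so $K$ satisfies the Heegner-type hypothesis relative to $\mathcal B$ needed for the formula, and the sign of the functional equation of $L_K(g,s)$ is $+1$ (as noted in the introduction). Consequently $L_K(g,1)=0$ if and only if $\langle\phi_g,\psi_K\rangle=0$.

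Finally I would translate the vanishing of this pairing into the vanishing of $\LL_K(g,1)\in\mathscr M_g$. The quotient $(\mathscr M/I_g)\otimes_\Z\Q$ is a one-dimensional vector space over $\mathrm{Frac}(\cO_g)=\Q(a_n(g))$, and using that $\psi_K\in\mathscr M$ is $\Z$-valued together with the Galois equivariance $\langle\sigma\phi_g,\psi_K\rangle=\sigma\langle\phi_g,\psi_K\rangle$, the image of $\psi_K$ in this quotient equals, up to a nonzero scalar in $\mathrm{Frac}(\cO_g)$, the pairing $\langle\phi_g,\psi_K\rangle$. Proposition \ref{M-iso-prop} asserts that $\mathscr M_g$ is free of rank one over $\cO_{g,\m}$, hence torsion-free; consequently the natural map
\[ (\mathscr M/I_g)\otimes\Q(a_n(g))\;\longrightarrow\;\mathscr M_g\otimes_{\cO_{g,\m}}\mathrm{Frac}(\cO_{g,\m}) \]
is an isomorphism of one-dimensional vector spaces, so $\LL_K(g,1)=0$ in $\mathscr M_g$ precisely when $\langle\phi_g,\psi_K\rangle=0$. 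Combining this equivalence with the Gross--Daghigh--Zhang formula yields the theorem.

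The main technical obstacle will be the careful bookkeeping of normalisations in the special value formula: one must verify that our test vector $\psi_K=[R_{\psi_1}]+\cdots+[R_{\psi_h}]$, built from oriented optimal embeddings of $\cO_K$ into oriented Eichler orders of $\mathcal B$, matches the distinguished vector appearing in \cite{Gr}, \cite{Daghigh} and \cite{Zh2}, and that the implicit constants (Petersson norms, degree of $\pi_E$, factors arising from $\#\cO_K^\times$) collapse into the single nonzero scalar $c_g$.
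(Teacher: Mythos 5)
Your proposal is correct and follows essentially the same route as the paper: the paper also identifies $\LL_K(g,1)$ with the $g$-component of $\psi_K$ in $\mathscr M\otimes\C$ (using multiplicity one and the rank-one statement of Proposition \ref{M-iso-prop}) and then invokes the Gross--Daghigh--Zhang special value formula, quoted in the form $L_K(g,1)/(g,g)=\langle\psi_{K,g},\psi_{K,g}\rangle\big/\bigl(u^2\sqrt{\delta_K}\bigr)$ from \cite[Theorem 1.1]{BD97}. Your version of the formula, written via the pairing $\langle\phi_g,\psi_K\rangle$ against an eigenvector $\phi_g$, is just an algebraic rewriting of the self-pairing of the projection $\psi_{K,g}$, so the two arguments coincide in substance.
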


\begin{proof} View $\psi_K$ as an element of $\mathscr M\otimes\C$ and write $\psi_{K,g}$ for the projection of $\psi_K$ on the $g$-isotypic component of $\mathscr M\otimes\C$. Then
\begin{equation} \label{iff-eq}
\psi_{K,g}=0\;\Longleftrightarrow\;\LL_K(g,1)=0. 
\end{equation}
By multiplicity one, the $g$-isotypic component of $\mathscr M\otimes\C$ is a one-dimensional $\C$-vector space (cf. \cite[\S 2.1]{Vat2}) and the pairing $\langle\,,\rangle$ on $\mathscr M$ defined in \eqref{scalar-product-eq} induces a perfect, nondegenerate pairing on it. By \cite[Theorem 1.1]{BD97}, one has the formula
\begin{equation} \label{gross-eq}
\frac{L_K(g,1)}{(g,g)}=\frac{\langle\psi_{K,g},\psi_{K,g}\rangle}{u^2\sqrt{\delta_K}}, 
\end{equation}
where $(g,g)$ is the Petersson scalar product of $g$ with itself, $u$ is equal to $\#\cO_K^\times/2$ and, as before, $\delta_K$ is the discriminant of $K$ (see \cite[\S 2.8]{Vat1} for a more general formulation). The result follows by comparing \eqref{iff-eq} and \eqref{gross-eq}. \end{proof}

\begin{remark}
A special case of formula \eqref{gross-eq} was first proved in \cite{Gr} by Gross, whose work was generalised by Daghigh in \cite{Daghigh}. A formula for Hilbert cusp forms of parallel weight $2$ twisted by finite characters was finally obtained by Zhang in \cite[Theorem 1.3.2]{Zh2}.
\end{remark} 

\subsection{Review of Heegner points} \label{heegner-subsec}

In this subsection we briefly review some basic results about Heegner points on (indefinite) Shimura curves; for more details, see \cite[Section 2]{BD96}.

Suppose that we have chosen orientations of $\cO_K$ and of $R(M)$ (see \cite[\S 1.1 and \S 2.2]{BD96}). For our purposes, a \emph{Heegner point} $P$ of conductor $1$ on $X_0^D(M)$ is the image in $X_0^D(M)$ of the fixed point in $\mathcal H$ for the action of $\C^\times$ via an oriented optimal embedding of $\cO_K$ into $R(M)$ and isomorphism $i_\infty$ of \eqref{indefinite-iso-eq}. Complex multiplication ensures that $P\in X_0^D(M)(H)$. If, as before, $h$ is the class number of $K$, there are precisely $h$ Heegner points $P_1,\dots,P_h$ of conductor $1$ on $X_0^D(M)$, which are permuted transitively by $\Gal(H/K)$ (see \cite[\S 2.3]{BD96}).

Analogously, there are $h$ Heegner points $\mathcal P_1,\dots,\mathcal P_h$ of conductor $1$ on $X_0^D(M\l)$. Replacing $\cO_K$ with the order of $K$ of conductor $\l$, one can also consider Heegner points of conductor $\l$ on $X_0^D(M\l)$, which are rational over $\mathcal L$. Since $\l$ is inert in $K$, there are exactly $h(\l+1)/u$ such points, to be denoted $\mathcal P'_i$, on which $\Gal(\mathcal L/K)$ acts simply transitively (\cite[Lemma 2.5]{BD96}).

Let $N_{\mathcal L/H}$ be the norm map. As explained in \cite[\S 2.4]{BD96}, we can (and do) choose orientations of the quadratic orders and of the Eichler orders in such a way that (up to renumbering) for every $i=1,\dots,h$ there is an equality   
\[ uN_{\mathcal L/H}(\mathcal P'_i)=\mathcal P_i \]
of divisors on $X_0^D(M\l)$. Now define
\[ P_K:=\sum_{i=1}^h\iota_M(P_i)\in J_0^D(M)(K),\qquad\mathcal P'_{\mathcal L}:=\sum_{i=1}^h\iota_{M\l}(\mathcal P'_i)\in J_0^D(M\l)(\mathcal L). \]
Having the points $\mathcal P_i$ at our disposal, fix the orientation of $R(M)$ so that
\[ \pi_1(\mathcal P_i)=P_i \]
for all $i=1,\dots,h$. Thanks to the commutativity of square \eqref{iota-square-eq}, it follows that
\begin{equation} \label{norm-heegner-eq}
P_K=u\pi_{1,*}\bigl(N_{\mathcal L/H}(\mathcal P'_{\mathcal L})\bigr). 
\end{equation}
Finally, define the Heegner point
\[ \alpha_K:=\pi_{E,*}(P_K)\in E(K). \]
This is the point in terms of which we shall state our main result.

\subsection{Jochnowitz congruences and main result}

Recall the isomorphism
\[ \eta:\bigl(E^2(K_\l)/\m E^2(K_\l)\bigr)^{\!\epsilon}\overset\cong\longrightarrow\mathscr M^0/\m\mathscr M^0 \]
defined in \eqref{eta-eq} and let $\tilde\pi_J:J_0^D(M\l)(M_\l)\rightarrow\tilde J/\m\tilde J$ be the map induced by $\pi_J$. Now that we have all the ingredients at hand, the proof of the main result of this paper (Theorem \ref{main-thm}) follows that of \cite[Theorem 6.1]{BD-ajm} closely. First of all, we need two lemmas.

\begin{lemma} \label{deg-lemma}
$u\deg(\pi_E)\cdot\boldsymbol n\bigl(\tilde\pi_J(\mathcal P'_{\mathcal L})\bigr)=\boldsymbol i\bigl((\alpha_K,0)\bigr)$.
\end{lemma}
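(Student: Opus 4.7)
The plan is to follow the strategy of \cite[Lemma 6.2]{BD-ajm}, suitably adapted to our Shimura curve setting.

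First, I would unwind the definitions. Since $\boldsymbol n$, $\tilde\pi_J$, and $\boldsymbol i$ are induced respectively by the local norm $N_{\mathcal L_\ell/K_\ell}$, the projection $\pi_J$, and the composition $\varphi_E=\pi_J\circ\pi^*\circ\pi_E^*$ restricted to the first factor, the asserted identity amounts to the congruence
\[
u\deg(\pi_E)\cdot\pi_J\bigl(N_{\mathcal L_\ell/K_\ell}(\mathcal P'_{\mathcal L})\bigr)\equiv\pi_J\bigl(\pi_1^*(\pi_E^*(\alpha_K))\bigr)\pmod{\m J(K_\ell)}
\]
in $\bigl(J(K_\ell)/\m J(K_\ell)\bigr)^\epsilon$. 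As recalled in \S\ref{ring-class-subsec}, the prime $\ell$ splits completely in $H/K$ while each prime of $H$ above $\ell$ is totally ramified in $\mathcal L$; hence $\Gal(\mathcal L_\ell/K_\ell)=\Gal(\mathcal L/H)$, and the local trace $N_{\mathcal L_\ell/K_\ell}(\mathcal P'_{\mathcal L})$ coincides with the image in $J_0^D(M\ell)(K_\ell)$ of the global point $Q:=N_{\mathcal L/H}(\mathcal P'_{\mathcal L})$.

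Next, I would invoke the Heegner-norm identity \eqref{norm-heegner-eq}, namely $P_K=u\pi_{1,*}(Q)$, together with $\alpha_K=\pi_{E,*}(P_K)$, to rewrite
\[
\pi_J\bigl(\pi_1^*(\pi_E^*(\alpha_K))\bigr)=u\cdot\pi_J\bigl(\pi_1^*\pi_E^*\pi_{E,*}\pi_{1,*}(Q)\bigr).
\]
The congruence to be proved thus reduces to
\[
\deg(\pi_E)\cdot\pi_J(Q)\equiv\pi_J\bigl(\pi_1^*\pi_E^*\pi_{E,*}\pi_{1,*}(Q)\bigr)\pmod{\m J(K_\ell)}.
\]
Both sides lie in the one-dimensional $\T/\m$-vector space $\bigl(J(K_\ell)/\m J(K_\ell)\bigr)^\epsilon$ (Proposition \ref{iso-prop}), identified via $\boldsymbol i$ with $\bigl(E^2(K_\ell)/\m E^2(K_\ell)\bigr)^\epsilon$.

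The argument then rests on two ingredients. On the one hand, the Albanese-Picard relation $\pi_{E,*}\circ\pi_E^*=[\deg(\pi_E)]$ on $E$ implies that the endomorphism $\pi_E^*\pi_{E,*}$ of $J_0^D(M)$ acts as multiplication by $\deg(\pi_E)$ on the $f$-isotypic component $\pi_E^*(E)\subset J_0^D(M)$, which is the only piece of $J_0^D(M)$ that $\pi_J\circ\pi_1^*$ sees modulo $\m$. On the other hand, the $\T$-equivariance of the degeneracy maps (Lemma \ref{compatibility-lemma}), combined with $T_\ell\equiv 0$, $U_\ell\equiv\epsilon\pmod{\m}$ and $p\mid\ell+1$, should allow one to control $\pi_J(\pi_1^*\pi_{1,*}(Q))$ in terms of $\pi_J(Q)$. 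Since $p\nmid\deg(\pi_E)$ by condition (2) of \S\ref{p-subsec}, $\deg(\pi_E)$ remains a unit modulo $\m$, and these two ingredients will combine to yield the identity. The hard part will be the second ingredient: it requires a delicate Hecke-theoretic analysis of the difference $\pi_1^*\pi_{1,*}(Q)-Q$ modulo $IJ_0^D(M\ell)+\m J_0^D(M\ell)$, relying on how $U_\ell$ acts on the image of $\pi^*$ restricted to the $\epsilon$-eigenspace. This is the direct analogue of the computation of \cite[Lemma 6.2]{BD-ajm}, which transfers to our quaternionic setting thanks to the Jacquet-Langlands correspondence and the careful choices made in \S\ref{p-subsec} and \S\ref{l-subsec}.
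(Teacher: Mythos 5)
Your unwinding of the maps, your use of $\alpha_K=\pi_{E,*}(P_K)$, and your first ingredient (that $\pi_E^*\pi_{E,*}$ acts as $\deg(\pi_E)$ on the part of $J_0^D(M)$ that survives localisation at the non-Eisenstein ideal $\m$) all match the paper's argument. But the step you label ``the hard part'' is exactly the content of the lemma, and your proposed tool for it is the wrong one, so the proof has a genuine gap. After substituting \eqref{norm-heegner-eq} you are left with comparing $\pi_J(Q)$ and $\pi_J\bigl(\pi_1^*\pi_{1,*}(Q)\bigr)$ modulo $\m$, and you propose to handle this by a ``Hecke-theoretic analysis'' of $\pi_1^*\pi_{1,*}$ using $T_\l\equiv 0$, $U_\l\equiv\epsilon$ and $p\mid\l+1$. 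No such operator-level argument can work: the relations available (e.g. $\pi_{1,*}\circ\pi_1^*=\l+1\equiv 0\pmod{\m}$, and the standard expressions for $\pi_1^*\pi_{1,*}$ in terms of $U_\l$ and the Atkin--Lehner involution) do not exhibit $\pi_1^*\pi_{1,*}$ as a unit times the identity modulo $\m$, and indeed the desired congruence is false for a general point of $J_0^D(M\l)$. It holds for the specific point $Q=N_{\mathcal L_\l/K_\l}(\mathcal P'_{\mathcal L})$ only because of complex multiplication: the geometric input is the divisor-level norm relation between the $\Gal(\mathcal L/H)$-orbit of the conductor-$\l$ Heegner points and the conductor-one Heegner divisor pulled back by $\pi_1$ (the relation behind \eqref{norm-heegner-eq}, cf.\ \cite[\S 2.4]{BD96}), transported through the embeddings $\iota_M$ and $\iota_{M\l}$.

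That transport is the second thing missing from your sketch. The embeddings $\iota_M$, $\iota_{M\l}$ are defined via the Hodge classes $\xi_M$, $\xi_{M\l}$, and pulling back by $\pi_1$ does not commute with them on the nose; the discrepancy is controlled by the functoriality of the Hodge class (\cite[eq.\ (3.7)]{CV}) together with the fact that the Hecke action on Hodge classes is Eisenstein, so that the error term dies after completing at the non-Eisenstein ideal $\m$. This is how the paper proves the key identity
\[
u\cdot\boldsymbol n\bigl(\tilde\pi_J(\mathcal P'_{\mathcal L})\bigr)=\bigl[\pi_J\bigl(\pi_1^*(P_K)\bigr)\bigr]
\quad\text{in } J(K_\l)/\m J(K_\l),
\]
directly from the Heegner norm relation, never forming the composite $\pi_1^*\pi_{1,*}$ at all; the lemma then follows from $\deg(\pi_E)P_K\equiv\pi_E^*(\alpha_K)$, from $\pi_1^*\pi_E^*(\alpha_K)=\pi^*\bigl(\pi_E^*(\alpha_K,0)\bigr)$ and from the definition of $\boldsymbol i$ via $\varphi_E$, which is the part you did carry out. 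To repair your write-up you should replace the deferred ``delicate Hecke-theoretic analysis'' by this CM-theoretic and Hodge-class argument; as it stands, the essential point is asserted rather than proved.
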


\begin{proof} Adopt the symbol $[\star]$ to denote the class modulo $\m$ of an element $\star$. Using \eqref{norm-heegner-eq}, the functoriality of the Hodge class (cf. \cite[eq. (3.7)]{CV}) and the fact that the Hecke action on Hodge classes is Eisenstein, it can be checked that the equality
\[ u\cdot\boldsymbol n\bigl(\tilde\pi_J(\mathcal P'_{\mathcal L})\bigr)=\bigl[\pi_J\bigl(\pi_1^*(P_K)\bigr)\bigr] \]
holds in $J(K_\l)/\m J(K_\l)$. On the other hand, we know that
\[ \deg(\pi_E)P_K=\pi_E^*(\alpha_K) \]
in $J(K_\l)/\m J(K_\l)$. Therefore 
\[ u\deg(\pi_E)\cdot\boldsymbol n\bigl(\tilde\pi_J(\mathcal P'_{\mathcal L})\bigr)=\pi_J\bigl(\pi_1^*(\pi_E^*(\alpha_K))\bigr)=\pi_J\Big(\pi^*\bigl(\pi_E^*((\alpha_K,0))\bigr)\!\Big)=\boldsymbol i\bigl((\alpha_K,0)\bigr), \]
with the last equality following from the definition of $\boldsymbol i$ (cf. Proposition \ref{iso-prop}). \end{proof}

\begin{lemma} \label{lemma2}
$\boldsymbol p\bigl(\tilde\pi_J(\mathcal P'_{\mathcal L})\bigr)=\boldsymbol j\bigl(\LL_K(g,1)\bigr)$.
\end{lemma}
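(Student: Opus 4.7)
The plan is to trace $\mathcal{P}'_{\mathcal{L}}$ through the chain of identifications defining both sides and reduce the lemma to a computation of the reduction of a Heegner divisor in the component group of $J_0^D(M\ell)$. Concretely, by the definition of $\boldsymbol{j}$ and by Lemma \ref{Phi-iso-lemma}, the map $\boldsymbol j^{-1}\circ \boldsymbol p$ is (after completion at $\mathfrak{m}$) nothing but the specialisation map to the component group of $J_0^D(M\ell)_{/\mathcal{L}_\ell}$; then the identification $\Phi\bigl(J_0^D(M\ell)_{/\mathcal{L}_\ell}\bigr)\big/\mathfrak{m}=\mathscr{M}^0/\mathfrak{m}\mathscr{M}^0$ coming from Proposition \ref{non-eisenstein-prop} combined with the canonical identification $X^*(T)=\mathscr M^0$ turns the problem into showing that the specialisation of $\mathcal{P}'_{\mathcal{L}}$ is $\psi_K$ modulo $\mathfrak{m}$.

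The key geometric input is the following. Since $\ell$ is inert in $K$ and $\mathcal{L}_\ell/K_\ell$ is totally ramified, each Heegner point $\mathcal{P}'_i\in X_0^D(M\ell)(\mathcal{L})$ of conductor $\ell$ extends to an $\mathcal{O}_{\mathcal{L}_\ell}$-point of Drinfeld's model $\mathcal{X}_0^D(M\ell)$, whose image in the special fibre lies in the supersingular locus (the point of conductor $\ell$ acquires CM by $\mathcal{O}_\ell$, whose action on the reduction forces the associated enhanced QM surface to be supersingular). Under the bijection of Proposition \ref{sigma-e-prop} between supersingular points in the special fibre at $\ell$ and oriented Eichler orders of level $M$ in $\mathcal{B}$, the reduction of $\mathcal{P}'_i$ corresponds exactly to the class $[R_{\psi_i}]\in \mathcal{E}$, where $\psi_i:\mathcal{O}_K\hookrightarrow R_{\psi_i}$ is the oriented optimal embedding attached to $\mathcal{P}'_i$; this is the quaternionic analogue of Ribet's theorem on the reduction of Heegner points and is part of the Cerednik--Drinfeld/Drinfeld picture developed in \cite{Ro}. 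The Hodge class $\xi_{M\ell}$ subtracted by $\iota_{M\ell}$ in order to pass to the Jacobian is Eisenstein, hence its reduction dies modulo the non-Eisenstein ideal $\mathfrak{m}$. Summing over $i$, one obtains
\[
\boldsymbol j^{-1}\bigl(\boldsymbol p\bigl(\tilde\pi_J(\mathcal{P}'_{\mathcal{L}})\bigr)\bigr)=\sum_{i=1}^h [R_{\psi_i}]\bmod \mathfrak{m}=\psi_K\bmod \mathfrak{m}
\]
in $\mathscr{M}^0/\mathfrak{m}\mathscr{M}^0=\mathscr{M}/\mathfrak{m}\mathscr{M}$ (the second equality using Proposition \ref{M-iso-prop} to identify the degree-zero part with the full module after passing to $\mathfrak{m}$-completion). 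By Definition \ref{alg-part-dfn}, the right hand side is precisely $\mathbb{L}_K(g,1)$, and applying $\boldsymbol j$ yields the claim.

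The main obstacle is the second step: one must make rigorous, in the quaternionic setting and via Drinfeld's integral model $\mathcal{X}_0^D(M\ell)$, the statement that the reduction of a Heegner point of conductor $\ell$ (with $\ell$ inert in $K$) corresponds under Proposition \ref{sigma-e-prop} to the Eichler order appearing in the attached optimal embedding. In the classical modular case this is folklore going back to Deuring and is carefully written up in \cite[Section 5]{BD98}; for Shimura curves attached to division algebras it requires the deformation theory of enhanced supersingular QM surfaces and the explicit reciprocity between CM lifts and optimal embeddings, as developed in Rotger's work and exploited in \cite{Helm}. Once this dictionary is in place, the remaining bookkeeping (dropping the Eisenstein Hodge contribution, commuting reduction with the sum over $i$, and identifying $\mathscr{M}^0/\mathfrak m$ with $\Phi(J_0^D(M\ell)_{/\mathcal{L}_\ell})/\mathfrak m$) is entirely parallel to \cite[Lemma 6.3]{BD-ajm}.
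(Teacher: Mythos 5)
Your proposal is correct and is in substance exactly what the paper does: the paper's entire proof is the citation of \cite[Theorem 3.2]{BD97}, which is precisely the statement you isolate as the ``main obstacle'', namely that the Heegner points of conductor $\ell$ specialise at the inert prime $\ell$, compatibly with orientations and the simply transitive $\Pic(\cO_K)$-actions, to the Gross points of conductor one (optimal embeddings of $\cO_K$, the $\ell$-part of the conductor dropping out), so that the image of $\mathcal P'_{\mathcal L}$ in $\Phi\bigl(J_0^D(M\ell)_{/\mathcal L_\ell}\bigr)/\m=\mathscr M^0/\m\mathscr M^0$ is $\psi_K=\LL_K(g,1)$ modulo $\m$. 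Thus the gap you flag is closed by invoking that theorem rather than by Deuring-style folklore or deformation-theoretic arguments (and note the paper's source for the quaternionic dictionary is Roberts' thesis \cite{Ro}, not Rotger); the surrounding bookkeeping with $\boldsymbol p$, $\boldsymbol j$, the non-Eisenstein ideal $\m$ and the Eisenstein Hodge class is as you describe.
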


\begin{proof} This is a consequence of \cite[Theorem 3.2]{BD97}. \end{proof}

Write $[\LL_K(g,1)]$ for the class of $\LL_K(g,1)$ in the quotient $\mathscr M^0_g/\m_g\mathscr M^0_g=\mathscr M^0/\m\mathscr M^0$. 

\begin{theorem} \label{main-thm}
If $g$ is an eigenform on $J'$ then the equality
\[ \eta\bigl((\alpha_K,0)\bigr)=u\deg(\pi_E)\cdot[\LL_K(g,1)] \]
holds in $\mathscr M^0/\m\mathscr M^0$.
\end{theorem}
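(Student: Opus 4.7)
The plan is to prove the identity by pure diagram chasing, exploiting that the two preparatory Lemmas \ref{deg-lemma} and \ref{lemma2} already encode the arithmetic content (the first linking the Heegner point $\alpha_K$ on the elliptic curve side to the Heegner divisor $\mathcal P'_{\mathcal L}$ on the Jacobian side, the second expressing that same Heegner divisor, after projection to the component group, as the algebraic special value $\LL_K(g,1)$). Since $\eta$ was defined in \eqref{eta-eq} as the composition
\[ \eta=\boldsymbol j^{-1}\circ\boldsymbol p\circ\boldsymbol n^{-1}\circ\boldsymbol i, \]
and each of the four arrows is an isomorphism of one-dimensional $\F_p$-vector spaces (by Propositions \ref{iso-prop}, \ref{p-iso-prop}, \ref{n-iso-prop} and Lemma \ref{Phi-iso-lemma}), the task reduces to tracing $(\alpha_K,0)$ through these four maps.

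Concretely, I would first apply $\boldsymbol i$ to $(\alpha_K,0)$. By Lemma \ref{deg-lemma} this gives $u\deg(\pi_E)\cdot\boldsymbol n\bigl(\tilde\pi_J(\mathcal P'_{\mathcal L})\bigr)$, so applying $\boldsymbol n^{-1}$ yields $u\deg(\pi_E)\cdot\tilde\pi_J(\mathcal P'_{\mathcal L})$ (here one uses that $\boldsymbol n$ is additive and the ambient module is $\F_p$-linear). Next, applying $\boldsymbol p$ and invoking Lemma \ref{lemma2} produces $u\deg(\pi_E)\cdot\boldsymbol j\bigl(\LL_K(g,1)\bigr)$. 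Finally, applying $\boldsymbol j^{-1}$ and using the identification $\mathscr M^0_g/\m_g\mathscr M^0_g=\mathscr M^0/\m\mathscr M^0$ provided by Proposition \ref{M-iso-prop} (which tells us that the $g$-component carries the full information modulo $\m$, since $\m$ is non-Eisenstein by Lemma \ref{controllable-lem} and multiplicity one holds by Theorem \ref{1-dimensional-thm}), one obtains the desired right-hand side $u\deg(\pi_E)\cdot[\LL_K(g,1)]$.

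There is no real obstacle at this stage: all the work has been absorbed into the two lemmas and the identification of $\tilde J/\m\tilde J$, $\bigl(J(K_\l)/\m J(K_\l)\bigr)^\epsilon$, $\Phi(J_{/\mathcal L_\l})/\m$ and $\mathscr M^0/\m\mathscr M^0$ as mutually isomorphic one-dimensional $\T/\m$-vector spaces. The only mild care required is to verify that the $u\deg(\pi_E)$ factor, which a priori lives in $\Z$, makes sense after reduction modulo $\m$; this is automatic since the maps are $\Z$-linear and all objects are $\m$-torsion modules. Thus the proof amounts to writing the chain of equalities
\[ \eta\bigl((\alpha_K,0)\bigr)=\boldsymbol j^{-1}\bigl(\boldsymbol p\bigl(\boldsymbol n^{-1}\bigl(\boldsymbol i((\alpha_K,0))\bigr)\bigr)\bigr)=u\deg(\pi_E)\cdot\boldsymbol j^{-1}\bigl(\boldsymbol p(\tilde\pi_J(\mathcal P'_{\mathcal L}))\bigr)=u\deg(\pi_E)\cdot[\LL_K(g,1)], \]
which is exactly the statement of the theorem.
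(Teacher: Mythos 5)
Your proposal is correct and is essentially the paper's own argument: the paper likewise proves the theorem by combining Lemma \ref{deg-lemma} and Lemma \ref{lemma2} with the commutativity of the diagram built from $\tilde\pi_J$, $\boldsymbol n$, $\boldsymbol p$, $\boldsymbol i$, $\boldsymbol j$, which is exactly your chain of equalities unwinding $\eta=\boldsymbol j^{-1}\circ\boldsymbol p\circ\boldsymbol n^{-1}\circ\boldsymbol i$.
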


\begin{proof} Let $J_0^D(M\l)(\mathcal L_\l)^\epsilon$ denote the submodule of points in $J_0^D(M\l)$ whose natural image in the quotient $J_0^D(M\l)(\mathcal L_\l)\big/(\sigma-1)J_0^D(M\l)(\mathcal L_\l)$ belongs to the $\epsilon$-eigenspace for the action of complex conjugation. There is a diagram
\begin{equation} \label{final-diagram-eq}
\xymatrix{&J_0^D(M\l)(\mathcal L_\l)^\epsilon\ar[d]^-{\tilde\pi_J}\ar[r]&\mathscr M^0/\m\mathscr M^0\ar[d]^-{\boldsymbol j}\\
                 &(\tilde J/\m\tilde J)^\epsilon\ar[d]^-{\boldsymbol n}\ar[r]^-{\boldsymbol p}&\Phi(J_{/\mathcal L_\l})/\m\\
                 \bigl(E^2(K_\l)/\m E^2(K_\l)\bigr)^{\!\epsilon}\ar[r]^-{\boldsymbol i}&\bigl(J(K_\l)/\m J(K_\l)\bigr)^{\!\epsilon},&} 
\end{equation}
and the theorem follows by combining Lemmas \ref{deg-lemma} and \ref{lemma2} with the commutativity of \eqref{final-diagram-eq}. \end{proof}

Theorem \ref{main-thm} immediately implies Theorem \ref{main-intro-thm}. 

\begin{remark}
By the formula of Gross--Zagier type proved by Zhang in \cite[Theorem C]{Zh}, the Heegner point $\alpha_K$ encodes, via its N\'eron--Tate height, the central derivative $L'_K(E,1)$, hence Theorem \ref{main-thm} (or, rather, Theorem \ref{main-intro-thm}) can be viewed as providing a congruence modulo $\m$ between $L'_K(E,1)$ and $L_K(g,1)$. 
\end{remark}

\subsection{Some consequences} \label{consequences-subsec}

Here we collect two consequences of our main result.

\begin{proposition} \label{J'-prop}
If the image of $\alpha_K$ in $E(K_\l)/pE(K_\l)$ is nonzero then $L_K(J',1)\not=0$.
\end{proposition}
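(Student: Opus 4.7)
The plan is to chain together the main theorem just proved (Theorem \ref{main-intro-thm}) with the nonvanishing criterion of Theorem \ref{alg-part-thm} and the factorisation of $L_K(J',s)$ coming from the isogeny decomposition of $J'$.

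First I would use the isogeny
\[ J'\sim\prod_{[g]}A_g \]
established in the proof of Proposition \ref{mult-red-prop}, where the product runs over Galois orbits of eigenforms on $J'$. Since $L$-functions are invariant under isogeny and compatible with products, this gives a factorisation
\[ L_K(J',s)=\prod_{[g]}L_K(A_g,s)=\prod_{[g]}\prod_{\sigma}L_K(g^\sigma,s), \]
where the inner product is over Galois conjugates of a representative $g$ of each orbit. Consequently $L_K(J',1)\neq 0$ if and only if $L_K(g,1)\neq 0$ for every eigenform $g$ on $J'$ (the Galois conjugates automatically vanish together, as $L_K(g^\sigma,1)$ is the image of $L_K(g,1)$ under $\sigma$).

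Next, assume that the image of $\alpha_K$ in $E(K_\l)/pE(K_\l)$ is nonzero. By Theorem \ref{main-intro-thm}, this is equivalent to
\[ \LL_K(g,1)\not\equiv 0\pmod{\m_g}\qquad\text{for every form }g\text{ on }J'. \]
In particular, for each such $g$ the element $\LL_K(g,1)\in\mathscr M_g^0$ is nonzero (a nonzero element modulo a maximal ideal is a fortiori nonzero). Applying Theorem \ref{alg-part-thm}, this forces $L_K(g,1)\neq 0$ for every eigenform $g$ on $J'$. Combined with the factorisation above, this yields $L_K(J',1)\neq 0$, as desired.

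There is essentially no obstacle: all the heavy lifting has already been done in Theorem \ref{main-thm} and Theorem \ref{alg-part-thm}. The only minor point to verify is that "form on $J$ which is new at $\l$" (parametrising the factors of $J'$ in \eqref{J-isogeny-eq}) coincides with "form on $J'$" in the sense used in Theorem \ref{main-intro-thm}, which is immediate from Definition \ref{form-on-J-dfn} and the construction of $J'$ as the image in $J$ of the $\l$-new subvariety $J_0^D(M\l)^{\text{$\l$-new}}$.
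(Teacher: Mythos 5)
Your proposal is correct and follows essentially the same route as the paper: apply Theorem \ref{main-intro-thm} to get $\LL_K(g,1)\not\equiv0\pmod{\m_g}$ (hence $\LL_K(g,1)\neq0$) for every eigenform $g$ on $J'$, deduce $L_K(g,1)\neq0$ from Theorem \ref{alg-part-thm}, and conclude via the factorisation $L_K(J',1)=\prod_g L_K(g,1)$. Your extra remarks on the isogeny decomposition and Galois conjugates merely make explicit what the paper leaves implicit.
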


\begin{proof} By Theorem \ref{main-intro-thm}, $[\LL_K(g,1)]\not=0$ for all eigenforms $g$ on $J'$. In particular, $\LL_K(g,1)\not=0$, hence $L_K(g,1)\not=0$ by Theorem \ref{alg-part-thm}. But $L_K(J',1)=\prod_g L_K(g,1)$, where the product is taken over the distinct eigenforms on $J'$, and the claim follows. \end{proof}

\begin{proposition} \label{m-selmer-prop}
If the image of $\alpha_K$ in $E(K_\l)/pE(K_\l)$ is nonzero then
\begin{enumerate}
\item the $p$-Selmer group of $E$ over $K$ is one-dimensional over $\F_p$ and is generated by the image of $\alpha_K$ under the connecting homomorphism of Kummer theory;
\item the $\m$-Selmer group of $J'$ is trivial, hence $J'(K)$ is finite. 
\end{enumerate}
\end{proposition}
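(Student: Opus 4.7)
The plan is to deduce both statements from Theorem \ref{main-intro-thm} combined with standard Euler-system-style cohomological arguments. For part (1), Kummer theory attaches to $\alpha_K\in E(K)$ a canonical class $\delta(\alpha_K)\in H^1(K,E[p])$ which, by construction, lies in $\Sel_p(E/K)$. Because $\l$ is a Kolyvagin prime (so $E$ has good reduction at $\l$ and $\l$ is unramified in $K$), the local Selmer condition at $\l$ is the finite part $H^1_f(K_\l,E[p])$, which is canonically identified with $E(K_\l)/pE(K_\l)$; the hypothesis is then exactly that $\mathrm{loc}_\l\bigl(\delta(\alpha_K)\bigr)\neq 0$, and in particular $\Sel_p(E/K)$ is at least one-dimensional, generated by $\delta(\alpha_K)$.

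For the matching upper bound I would invoke Kolyvagin's method via derived classes. Any $s\in\Sel_p(E/K)$ has $\mathrm{loc}_\l(s)\in H^1_f(K_\l,E[p])$, a one-dimensional $\F_p$-vector space. Global reciprocity $\sum_v\langle\mathrm{loc}_v(s),\mathrm{loc}_v(c_\l)\rangle=0$ applied to the Kolyvagin derivative class $c_\l$ associated with $\alpha_K$, which is locally trivial away from $\l$ and whose singular part at $\l$ is determined by $\delta(\alpha_K)$, forces $\mathrm{loc}_\l(s)$ to be proportional to $\mathrm{loc}_\l(\delta(\alpha_K))$; a standard argument involving a second Kolyvagin prime then upgrades this to the equality $\Sel_p(E/K)=\F_p\cdot\delta(\alpha_K)$. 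Once the $p$-Selmer group is known to be finite, Mordell--Weil and the Kummer sequence yield (1).

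For part (2), Theorem \ref{main-intro-thm} gives $[\LL_K(g,1)]\neq 0$ in $\mathscr M^0/\m\mathscr M^0$ for every form $g$ on $J'$, whence $L_K(g,1)\neq 0$ by Theorem \ref{alg-part-thm}. To transfer this analytic nonvanishing into the triviality of $\Sel_\m(J')$, I would argue as follows. By definition, any $s\in\Sel_\m(J')$ has vanishing singular localisation at $\l$; under the identification of $H^1_{\mathrm{sing}}(K_\l,J'[\m])$ with $\Phi(J'_{/K_\l})/\m$ furnished by the purely toric reduction of $J'$ at $\l$ (Proposition \ref{mult-red-prop} and Corollary \ref{mumford-coro}), and by Tate local duality pairing the finite and singular parts, $\mathrm{loc}_\l(s)$ must pair trivially with the image of $\mathcal P'_{\mathcal L}$ in $(\tilde J/\m\tilde J)^\epsilon$. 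Lemma \ref{lemma2} identifies this image with $\boldsymbol j\bigl(\LL_K(g,1)\bigr)$, which is nonzero, while Proposition \ref{M-iso-prop} ensures the $g$-isotypic components exhaust the relevant $\m$-part; combined with a rigidity argument over a suitable family of Kolyvagin primes this forces $s=0$. Triviality of $\Sel_\m(J')$ then implies finiteness of $J'(K)[\m^\infty]$ and hence of $J'(K)$ via Mordell--Weil (noting that $\m$ sits above $p$ in an $\cO_g$ acting faithfully on a factor of $J'$ up to isogeny).

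The main obstacle will be the precise cohomological matching between the Selmer-group framework and the component-group/character-group picture developed in \S\ref{components-subsec} and exploited throughout \S\ref{J-subsec}. This identification, via Tate local duality for abelian varieties with purely toric reduction and the chain of isomorphisms $\boldsymbol i$, $\boldsymbol p$, $\boldsymbol n$, $\boldsymbol j$ from Propositions \ref{iso-prop}, \ref{p-iso-prop}, \ref{n-iso-prop} and Lemma \ref{Phi-iso-lemma}, is exactly what makes the Jochnowitz-congruence strategy bear fruit for Selmer groups.
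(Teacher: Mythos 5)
Part (1) of your proposal is essentially the paper's own argument: the hypothesis forces $\alpha_K\notin pE(K)$, and then Kolyvagin's theorem (which you re-sketch via derived classes and global reciprocity, and which the paper simply cites, noting that the Euler-system argument carries over verbatim from $X_0(N)$ to Heegner points on Shimura curves) gives that $\Sel_p(E/K)$ is one-dimensional and generated by $\delta(\alpha_K)$. That part is fine.

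Part (2), however, has a genuine gap. The paper does not prove the bound on $\Sel_\m(J')$ by a local computation at $\l$: it deduces it from Theorem \ref{main-intro-thm} together with the generalisation to eigenforms with non-rational Hecke eigenvalues of the results of \cite{BD97}, i.e.\ a ``mod $\m$ rank zero'' theorem whose proof is an Euler-system argument in its own right. Your proposed replacement misfires at the key step: a class $s\in\Sel_\m(J')$ localises at $\l$ into the \emph{finite} part $H^1_f(K_\l,J'[\m])$, and the image of $\mathcal P'_{\mathcal L}$ in $(\tilde J/\m\tilde J)^\epsilon$ is likewise a class of local points, hence also ``finite''; local Tate duality pairs the finite part against the \emph{singular} part, so pairing $\mathrm{loc}_\l(s)$ against the Heegner class as you describe yields no constraint. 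To make the special value $\LL_K(g,1)$ bite, one needs a \emph{global} cohomology class whose residue at an admissible prime is computed by $\LL_K(g,1)$ via a reciprocity law (this is where Kolyvagin-type derivative operators and level raising enter), and then global, not local, duality. Moreover, since $J'$ is attached to the fixed prime $\l$, ``varying over a suitable family of Kolyvagin primes'' cannot mean varying $\l$: one must raise the level again at auxiliary primes $\l'$, passing to quaternion algebras of discriminant $D\l\l'$, and choose $\l'$ by \v{C}ebotarev adapted to the given class $s$ -- none of which appears in your sketch, and all of which is precisely the content of the cited BD97-type theorem. As written, the phrase ``a rigidity argument over a suitable family of Kolyvagin primes forces $s=0$'' defers the entire proof of part (2) to an unspecified step, so either supply that argument or, as the paper does, invoke the (suitably generalised) results of \cite{BD97}.
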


\begin{proof}[Sketch of proof.] The assumption implies that $\alpha_K$ is not divisible by $p$ in $E(K)$, so part (1) follows from a theorem of Kolyvagin (see, e.g., \cite[Theorem 10.2]{Darmon} and \cite[Proposition 2.1]{Gr2}). As for part (2), it follows from Theorem \ref{main-intro-thm} and the natural generalisation of the results in \cite{BD97} to the case of eigenforms with not necessarily rational Hecke eigenvalues. \end{proof}

\begin{remark}
Strictly speaking, Kolyvagin proved his theorem in the case where the point $\alpha_K$ comes via a modular parametrization $X_0(N)\rightarrow E$ from a Heegner point on the classical modular curve $X_0(N)$. However, since Heegner points on modular curves and Heegner points on Shimura curves enjoy the same formal properties, his arguments carry over \emph{verbatim} to our more general situation.
\end{remark} 

For arithmetic results in the same vein as Proposition \ref{m-selmer-prop}, the reader is referred to the paper \cite{GP} by Gross and Parson.


\begin{thebibliography}{99}


\bibitem{BD96} M. Bertolini, H. Darmon, Heegner points on Mumford--Tate curves, \emph{Invent. Math.} {\bf 126} (1996), no. 3, 413--456.

\bibitem{BD97} M. Bertolini, H. Darmon, A rigid analytic Gross--Zagier formula and arithmetic applications, \emph{Ann. of Math. (2)} {\bf 146} (1997), no. 1, 111--147.

\bibitem{BD98} M. Bertolini, H. Darmon, Heegner points, $p$-adic $L$-functions, and the Cerednik--Drinfeld uniformization, \emph{Invent. Math.} {\bf 131} (1998), no. 3, 453--491.

\bibitem{BD-ajm} M. Bertolini, H. Darmon, Euler systems and Jochnowitz congruences, \emph{Amer. J. Math.} {\bf 121} (1999), no. 2, 259--281.

\bibitem{BD05} M. Bertolini, H. Darmon, Iwasawa's Main Conjecture for elliptic curves over anticyclotomic $\Z_p$-extensions, \emph{Ann. of Math. (2)} {\bf 162} (2005), no. 1, 1--64.

\bibitem{CV} C. Cornut, V. Vatsal, Nontriviality of Rankin--Selberg $L$-functions and CM points, in \emph{$L$-functions and Galois representations}, D. Burns, K. Buzzard and J. Nekov\'a\v{r} (eds.), London Mathematical Society Lecture Note Series {\bf 320}, Cambridge University Press, Cambridge, 2007, 121--186.

\bibitem{Daghigh} H. Daghigh, \emph{Modular forms, quaternion algebras, and special values of $L$-functions}, Ph. D. thesis, McGill University, 1997.

\bibitem{Darmon} H. Darmon, \emph{Rational points on modular elliptic curves}, CBMS Regional Conference Series in Mathematics {\bf 101}, American Mathematical Society, Providence, RI, 2004.

\bibitem{DT} F. Diamond, R. Taylor, Nonoptimal levels of mod $l$ modular representations, \emph{Invent. Math.} {\bf 115} (1994), no. 3, 435--462.

\bibitem{Gr} B. H. Gross, Heights and the special values of $L$-series, in \emph{Number Theory -- Proceedings of the 1985 Montreal Conference}, H. Kisilevsky and J. Labute (eds.), CMS Conference Proceedings {\bf 7}, American Mathematical Society, Providence, RI, 1987, 115--187.

\bibitem{Gr2} B. H. Gross, Kolyvagin's work on modular elliptic curves, in \emph{$L$-functions and Arithmetic}, J. Coates and M. J. Taylor (eds.), London Mathematical Society Lecture Note Series {\bf 153}, Cambridge University Press, Cambridge, 1991, 235--256.

\bibitem{GP} B. H. Gross, J. A. Parson, On the local divisibility of Heegner points, in \emph{Number Theory, Analysis and Geometry -- in memory of Serge Lang}, D. Goldfeld, J. Jorgenson, P. Jones, D. Ramakrishnan, K. A. Ribet and J. Tate (eds.), Springer, New York, 2012, 215--241.

\bibitem{Helm-thesis} D. Helm, \emph{Jacobians of Shimura curves and Jacquet--Langlands correspondences}, Ph. D. thesis, University of California at Berkeley, 2003.

\bibitem{Helm} D. Helm, On maps between modular Jacobians and Jacobians of Shimura curves, \emph{Israel J. Math.} {\bf 160} (2007), no. 1, 61--117.

\bibitem{J} N. Jochnowitz, A $p$-adic conjecture about derivatives of $L$-series attached to modular forms, in \emph{$p$-adic monodromy and the Birch and Swinnerton-Dyer conjecture}, B. Mazur and G. Stevens (eds.),  \emph{Contemp. Math.} {\bf 165}, American Mathematical Society, Providence, RI, 1994, 239--263.

\bibitem{L} S. Ling, Shimura subgroups and degeneracy maps, \emph{J. Number Theory} {\bf 54} (1995), no. 1, 39--59.

\bibitem{Ma} B. Mazur, Modular curves and the Eisenstein ideal, \emph{Publ. Math. Inst. Hautes \'Etudes Sci.} {\bf 47} (1977), no. 1, 33--186. 


\bibitem{Ri76} K. A. Ribet, Galois action on division points of Abelian varieties with real multiplications, \emph{Amer. J. Math.} {\bf 98} (1976), no. 3, 751--804.

\bibitem{Ri84} K. A. Ribet, Congruence relations between modular forms, in \emph{Proceedings of the International Congress of Mathematicians, vol. 1,2 (Warsaw, 1983)}, PWN, Warsaw, 1984, 503--514.

\bibitem{Ri} K. A. Ribet, On modular representations of $\Gal(\bar\Q/\Q)$ arising from modular forms, \emph{Invent. Math.} {\bf 100} (1990), no. 2, 431--476.

\bibitem{Ro} D. P. Roberts, \emph{Shimura curves analogous to $X_0(N)$}, Ph. D. thesis, Harvard University, 1989.

\bibitem{Se} J.-P. Serre, Propri\'et\'es galoisiennes des points d'ordre fini des courbes elliptiques, \emph{Invent. Math.} {\bf 15} (1972), no. 4, 259--331.


\bibitem{Vat1} V. Vatsal, Uniform distribution of Heegner points, \emph{Invent. Math.} {\bf 148} (2002), no. 1, 1--46.

\bibitem{Vat2} V. Vatsal, Special values of anticyclotomic $L$-functions, \emph{Duke Math. J.} {\bf 116} (2003), no. 2, 219--261.



\bibitem{Zh} S.-W. Zhang, Heights of Heegner points on Shimura curves, \emph{Ann. of Math. (2)} {\bf 153} (2001), no. 1, 27--147.

\bibitem{Zh2} S.-W. Zhang, Gross--Zagier formula for ${\rm GL}_2$, \emph{Asian J. Math.} {\bf 5} (2001), no. 2, 183--290.

\bibitem{Zh3} S.-W. Zhang, Elliptic curves, $L$-functions, and CM-points, in \emph{Current Developments in Mathematics} {\bf 6}, G. Lusztig, B. Mazur, D. Jerison, A. J. de Jong, W. Schmid and S.-T. Yau (eds.), International Press, Somerville, MA, 2001, 179--219.

\end{thebibliography}
\end{document}